\DeclareMathSymbol{\Q}{\mathalpha}{AMSb}{"51}
\DeclareMathSymbol{\R}{\mathalpha}{AMSb}{"52}
\DeclareMathSymbol{\Z}{\mathalpha}{AMSb}{"5A}
\DeclareMathSymbol{\N}{\mathalpha}{AMSb}{"4E}
\DeclareMathSymbol{\C}{\mathalpha}{AMSb}{"43}
\newcommand{\G}{\ensuremath{{\mathbb{G}}}}
\newcommand{\supp}{\ensuremath{\text{\rm supp}}}
\newcommand{\X}{\ensuremath{{\mathcal{X}}}}
\newcommand{\A}{\ensuremath{\mathcal{A}}}
\newcommand{\Patt}[2]{\ensuremath{\mathcal{L}_{#2}(#1)}}
\newcommand{\Lang}[1]{\ensuremath{\mathcal{L}(#1)}}
\renewcommand{\vec}[1]{\mathbf{#1}}
\newcommand{\bu}{\mathbf{u}}
\newcommand{\bv}{\mathbf{v}}
\newcommand{\bt}{\mathbf{t}}
\newcommand{\bw}{\mathbf{w}}
\newcommand{\bd}{\mathbf{d}}
\newcommand{\be}{\mathbf{e}}
\newcommand{\bs}{\mathbf{s}}
\newcommand{\bz}{\mathbf{z}}
\newtheorem*{conjecture}{Conjecture}
\newtheorem{theorem}{Theorem}[section]
\newtheorem{lemma}[theorem]{Lemma}
\newtheorem{corollary}[theorem]{Corollary}
\newtheorem*{claim*}{Claim}
\theoremstyle{definition}
\newtheorem{definition}[theorem]{Definition}
\newtheorem{example}[theorem]{Example}
\newtheorem{remark}[theorem]{Remark}
\providecommand{\keywords}[1]
{
  \small	
  \textbf{\textit{Keywords---}} #1
}
\title{
% Structural aspects on p
Periodicity and local complexity of Delone sets%\footnote{The research was supported by the Academy of Finland grant 354965}
}
\author{Pyry Herva (pysahe@utu.fi) and Jarkko Kari (jkari@utu.fi)}
\date{Department of Mathematics and Statistics, University of Turku, Finland 
\\
% \medskip
% \today
}
\begin{document}

\maketitle

\abstract{
    % The abstract serves both as a general introduction to the topic and as a brief, non-technical summary of the main results and their implications. Authors are advised to check the author instructions for the journal they are submitting to for word limits and if structural elements like subheadings, citations, or equations are permitted.
    \noindent
    We study complexity and periodicity of Delone sets by applying an algebraic approach to multidimensional symbolic dynamics.
In this algebraic approach, $\Z^d$-configurations $c \colon \Z^d \to \A$ for a finite set $\A \subseteq \C$ and finite $\Z^d$-patterns are regarded as formal power series and Laurent polynomials, respectively.
    In this paper we study also functions $c \colon \R^d \rightarrow \A$ where $\A$ is as above.
    These functions are called $\R^d$-configurations.
    Any Delone set may be regarded as an $\R^d$-configuration by simply presenting it as its indicator function.
    Conversely, any $\R^d$-configuration whose support (that is, the set of cells for which the configuration gets non-zero values) is a Delone set can be seen as a colored Delone set.
    We generalize the concept of annihilators and periodizers of $\Z^d$-configurations for $\R^d$-configurations.
    We show that if an $\R^d$-configuration has a non-trivial annihilator, that is, if a linear combination of some finitely many of its translations is the zero function, then it has an annihilator of a particular form.
    % For a more restricted class of $\R^d$-configurations, namely for $\R^d$-configurations whose supports are Delone sets of finite local complexity (FLC), we get more specific results.
    Moreover, we show that $\R^d$-configurations with integer coefficients that have non-trivial annihilators are sums of finitely many periodic functions
    $c_1,\ldots,c_m \colon \R^d \rightarrow \Z$.
    Also, $\R^d$-pattern complexity is studied alongside with the classical patch-complexity of Delone sets.
    We point out the fact that sufficiently low $\R^d$-pattern complexity of an $\R^d$-configuration implies the existence of non-trivial annihilators.
    % This directly generalizes the known analogy for $\Z^d$-configurations.
    Moreover, it is shown that if a Meyer set has sufficiently slow patch-complexity growth, then it has a non-trivial annihilator.
    Finally, a condition for forced periodicity of colored Delone sets of finite local complexity is provided.
    % Also, a new concept of complexity is defined for Delone sets, and it is studied alongside with the classical patch-complexity of Delone sets.
    % It is shown that for Meyer sets a sufficiently low growth rate of the patch-complexity implies also low complexity with respect to the new complexity.
}

\noindent
\keywords{periodicity, patch-complexity, pattern complexity, Delone sets, Delone sets of finite local complexity (FLC), Meyer sets, annihilators, periodizers, configurations, multidimensional symbolic dynamics}

\newpage
%\tableofcontents

%\newpage

\section{Introduction}

In crystalline materials, particles 
% (\emph{e.g.} atoms, ions or molecules)
attach to each other to form
an ordered structure. 
For traditional crystals, the structure is periodic while for quasicrystals the structure is non-periodic.
% It is an outstanding question in crystallography to predict how given particles come together in crystals and to determine the formed crystal structure~\cite{csp}.
% The domino problem provides a simplified abstract model that captures some central aspects of this question.
% For a long time, it was believed that all ordered materials 
% % of this type 
% are necessarily periodic and hence crystals.
% However, in 
In 1982 Shechtman discovered the first quasicrystals \cite{shechtman}.
% However, in 1982 Dan Shechtman discovered the first quasicrystals.
% This discovery was published by Shechtman et al. in 1984 in \cite{shechtman}.
The discovery of quasicrystals gave pace to the study of a field of mathematics called 
the study of 
aperiodic order \cite{aperiodic1, aperiodic2}.
It
% which 
studies the mathematical foundation of quasicrystals, such as Delone sets and aperiodic tilings.
% such as aperiodic tilings and discrete point sets. 
% In particular, 
Delone sets are mathematical models for crystals and quasicrystals.
They are uniformly discrete and relatively dense subsets of the Euclidean space.
Delone sets as an object of study in crystallography were introduced in the late 1930s and named after Russian mathematician Boris Delone (also Delaunay in some contexts) \cite{borisdelone}.

In this article we study the connection between periodicity and local complexity of Delone sets.
The typical measure for local complexity of Delone sets is the patch-complexity.
A $T$-patch of a Delone set $S$ at a point $\bs \in S$ is the set of all points of $S$ within distance $T$ from $\bs$.
The patch-complexity of $S$ gives, for a radius $T >0$, the number of distinct $T$-patches of $S$ up to translation.
If a Delone set has finitely many $T$-patches up to translation for any radius $T>0$, then it is called a Delone set of finite local complexity.
% In general, it can be infinite or finite.
% If it is finite for all $T$, then $S$ is called a Delone set of finite local complexity.
It is known by Lagarias and Pleasants \cite{lagarias_pleasants_2} that a small enough patch-complexity implies periodicity (Theorem \ref{thm:forced periodicity Lagarias}).

We study 
% so-called 
$\R^d$-configurations, that is, functions $c \in \A^{\R^d}$ where $\A \subseteq \C$ is finite and non-empty.
We consider Delone sets as $\R^d$-configurations by presenting them as their indicator functions.
Conversely, $\R^d$-configurations whose supports are Delone sets are regarded as colored Delone sets.
An algebraic approach to $\Z^d$-configurations is applied and generalized to $\R^d$-configurations.
This algebraic approach was developed by the second author and Szabados in \cite{icalp} to study Nivat's conjecture which links periodicity and sufficiently small rectangular pattern complexity of $\Z^2$-configurations \cite{Nivat}.
It is a 2-dimensional generalization of the Morse-Hedlund theorem \cite{morse-hedlund}.

\subsection*{Structure of the paper}

We begin with preliminaries in Section \ref{sec: preliminaries}.
For example, low complexity is defined (Definition \ref{def: low complexity}).
In Section \ref{sec: symbolic dynamics} we present the basic concepts of symbolic dynamics for $\Z^d$-configurations.
The algebraic approach is introduced, and some relevant known results are reviewed.
In Section \ref{sec: delone sets} we define Delone sets, the patch-complexity, and the common classes of Delone sets.
Also, some new terminology is introduced, such as Delone configurations.

In Section \ref{sec: meyer sets} we consider Meyer sets, that is, Delone sets $S$ such that also $S-S$ is a Delone set.
We show that if the patch-complexity function of a Meyer set $S$ grows sufficiently slowly, then $S$ has low $\R^d$-pattern complexity (Theorem \ref{thm: low complexity Meyer}).
Also, a related conjecture (that has already been proven to be false) is discussed.

In Section \ref{sec: delone configurations with annihilators} we consider $\R^d$-configurations with annihilators.
First, we note that if an $\R^d$-configuration has low complexity, then it has a non-trivial annihilator, that is, 
a non-trivial linear combination of some finitely many of its translations is the zero function (Lemma \ref{lemma: non-trivial annihilator}).
This is a direct generalization of a similar result for $\Z^d$-configurations (Lemma \ref{th:low_complexity}).
Then we show that if an $\R^d$-configuration $c$ with integer coefficients has a non-trivial annihilator with integer coefficients, then it has an annihilator of a simple form (Theorem \ref{main result 1}).
This result is improved for such $\R^d$-configurations whose supports are Delone sets of finite local complexity (Theorem \ref{main result 2}).
% Delone sets of finite local complexity
Also, a periodic decomposition theorem for $\R^d$-configurations is provided (Theorem \ref{thm:generalized decomposition theorem}) as a direct generalization  of an analogous result for $\Z^d$-configurations (Theorem \ref{thm: regular decomposition theorem}).
Finally, we prove that if a 1-dimensional $\R^d$-configuration whose support is a Delone set has a non-trivial annihilator with integer coefficients, then it is periodic (Theorem \ref{thm:1d delone with annihilators is periodic}).

In Section \ref{sec: forced periodicity FLC} we consider forced periodicity of $\R^d$-configurations whose supports are Delone sets of finite local complexity.
We give a sufficient and necessary condition on the existence of particular annihilators to imply periodicity (Theorem \ref{thm: forced periodicity Delone FLC}).

\section{Preliminaries} \label{sec: preliminaries}

\subsection{Notation}

As usual, we denote by $\Z$, $\Q$, $\R$, and $\C$ the sets of integers, rational numbers, real numbers, and complex numbers, respectively.
By $\Z_+$ and $\R_+$ we mean the sets of positive integers and positive reals, respectively.
Moreover, natural numbers is the set $\N= \Z_+ \cup \{0\}$ of non-negative integers.
% In the following $d$ is a positive integer --- the dimension, and
% by $G$ we mean either
% $\Z^d$ or $\R^d$.
We consider the sets $\Z^d$ and $\R^d$ where $d$ is a positive integer --- the dimension.

For any two sets $A$ and $B$, we denote $A \Subset B$ if $A \subseteq B$ and $|A| < \infty$.
By $A^B$ we mean the set of all functions from $B$ to $A$.

For a vector $\bu \in \R^d$, we denote by $||\bv||$ its Euclidean norm.
Then $||\bu-\bv||$ is the Euclidean distance of $\bu,\bv\in\R^d$.
We denote by $B_T(\vec{u})$ and $B_T^{\circ}(\vec{u})$ the closed and open Euclidean balls, respectively, of radius $T$ centered at $\vec{u} \in \R^d$ for any real number $T \geq 0$.
In addition, we use the shorthand notations $B_T = B_T(\vec{0})$ and $B_T^{\circ} = B_T^{\circ}(\vec{0})$.

%Let $d \in \Z_+$ be the dimension.
We denote by $\be_i \in \Z^d$ the vector whose $i$th coordinate is 1 and all the other coordinates are 0.
% scalar product..
The scalar product of two vectors $\bu, \bv \in \R^d$ is denoted by $\bu \cdot \bv$.
The linear subspace of $\R^d$ generated by $\bu, \bv \in \R^d$ is the set
$$\langle \bu,\bv \rangle = \{ a \bu + b \bv \mid a,b \in \R \}.$$

\subsection{Configurations and periodicity}

In the following, $d$ denotes the dimension, and
by $G$ we mean either
$\Z^d$ or $\R^d$.

Let $\A \subseteq \C$ be any subset of complex numbers.
In this paper 
we study functions $c \colon G \rightarrow \A$, that is, elements of the set $\A^G$.
We say that $c \in \A^G$ is \emph{integral} if $\A \subseteq \Z$, and we say that it is \emph{finitary} if $\A \Subset \C$.
We may use the notation $c_{\vec{u}} = c(\bu)$ for the value of $c \in \A^G$ at position $\bu$.
The \emph{support} of $c$ is the set
$$
\supp(c) = \{ \bu \in G \mid c_{\bu} \neq 0 \}.
$$
If $c \in \A^{G}$ is finitary, it is called a \emph{$G$-configuration}.
The set $\A^{G}$ of all $G$-configurations over $\A \Subset \C$ is called the \emph{$G$-configuration space} over $\A$.
If it is clear from the context what $G$ is, we may call any $G$-configuration simply a configuration.
% If $c$ is finitary and $G=\Z^d$, it is called a \emph{$\Z^d$-configuration}.
% If $c$ is finitary and $G=\R^d$, it is called an \emph{$\R^d$-configuration}.
% We may call $G$-configurations
% The set $\A^{\Z^d}$ of all $\Z^d$-configurations over $\A \Subset \C$ is called the \emph{$\Z^d$-configuration space} over $\A$, and
% the set $\A^{\R^d}$ of all $\R^d$-configurations over $\A \Subset \C$ is called the \emph{$\R^d$-configuration space} over $\A$.

The \emph{translation} $\tau^{\vec{t}}(c)$ of $c \in \A^G$ by $\vec{t} \in G$ is defined such that $\tau^{\vec{t}}(c)_{\bu} = c_{\vec{u} -\vec{t}}$ for all $\vec{u} \in G$.
We say that $c$ is \emph{$\vec{t}$-periodic} if $\tau^{\mathbf{t}}(c)=c$, and we say that it is \emph{periodic} if it is $\vec{t}$-periodic for some non-zero $\mathbf{t}\in G$.
If $c$ is $\bt$-periodic, we may call $\bt$ a \emph{period} or a \emph{period vector} of $c$.  
Moreover, 
% we say that $c$ is \emph{non-periodic} if it is not periodic, {\it i.e.}, if $\vec{0}$ is its only period, and 
we say that it
is \emph{strongly periodic} if it has $d$
linearly independent vectors of periodicity over $\R$. 
% (By linear independence we mean linear independence over $\Q$ if $G= \Z^d$ or linear independence over $\R$ if $G= \R^d$.)
Finally, $c$ is \emph{periodic in direction} $\bv$ if it is $k \bv$-periodic for some non-zero $k \in \R$.

The indicator function $\mathbbm{1}_S \colon \R^d \rightarrow \{0,1\}$ of a subset $S \subseteq \R^d$ is defined as usual:
$$
\mathbbm{1}_S(\vec{u}) =
\begin{cases}
    1 &\text{, if } \vec{u} \in S \\
    0 &\text{, if } \vec{u} \not \in S
\end{cases}.
$$
A set $S \subseteq \R^d$ is $\vec{t}$-periodic, periodic, 
% non-periodic, 
strongly periodic or periodic in direction $\bv$ if its indicator function is $\vec{t}$-periodic, periodic, 
% non-periodic, 
strongly periodic or periodic in direction $\bv$, respectively.

\subsection{Patterns and pattern complexity}

We call any non-empty finite set $D\Subset G$ a \emph{$G$-shape}.
In particular, $\Z^d$-shapes are also $\R^d$-shapes.
If $G$ is known from the context, we may call $G$-shapes simply shapes.
For a shape $D \Subset G$, a function $p \colon D \rightarrow \A$ is called a \emph{$G$-pattern} of shape $D$ or just a pattern.
It may also be called a \emph{$D$-pattern} if it is relevant to emphazise the shape of the pattern. 
For a fixed shape $D \Subset G$, the set of all $D$-patterns of a configuration $c \in \A^G$ is the set $\Patt{c}{D} = \{ \tau^{\vec{t}}(c)|_D \mid \vec{t} \in G \}$.
% or $\Patt{c}{D} = \{ \tau^{\vec{t}}(c)|_D \mid \vec{t} \in \R^d \}$, respectively.
The set of all patterns of $c$
is denoted by $\Lang{c}$ which we may call the \emph{language of $c$}. 
For a set $\mathcal{S} \subseteq \A^G$ of configurations,
we define $\Patt{\mathcal{S}}{D}$ and  $\Lang{\mathcal{S}}$ as the unions of $\Patt{c}{D}$ and $\Lang{c}$, respectively, over all
$c\in \mathcal{S}$.
% Moreover, let us denote by $\A^*$ and $\A_{Gen}^*$ the sets of all patterns and generalized patterns, respectively, over $\A$ where the dimension $d$ is known from the context.

% \begin{remark} \label{remark:configurations as generalized configurations}
%   Note that when using the above notations, it should be clear from the context whether we are dealing with the configuration space or with the generalized configuration space.
%   Note also that any function $c \in \C^{\Z^d}$ can be expressed naturally as a function $c' \in \A^{\R^d}$ 
%   defined by
%   $$
%   c'(\bu) = 
%   \begin{cases}
%     c(\bu) &\text{, if } \bu \in \Z^d \\
%     a &\text{, if } \bu \in \R^d \setminus \Z^d
%   \end{cases}
%   $$
%   where $a \in \C$ is fixed.
%   % (Usually we have $a = \vec{0} \in \A$.)
%   Then clearly $c'$ is $\bt$-periodic if and only if $c$ is $\bt$-periodic for any $\bt \in \Z^d$.
%   However, notice that we have $\Patt{c'}{D} = \Patt{c}{D} \cup \{a^D\}$ for any shape $D$ where $a^D$ is the constant function that maps everything to $a$.
%   \end{remark}

% PATTERN COMPLEXITY JA GENERALIZED PATTERN COMPLEXITY JO TÄHÄN???!!

The \emph{pattern complexity} $P_c(D)$ of a configuration $c \in \A^{G}$ with respect to a shape $D \Subset G$ is the number of distinct $D$-patterns that $c$ contains, that is, $P_c(D) = |\Patt{c}{D}|$.
To emphasize whether we are dealing with $\Z^d$-configurations or $\R^d$-configurations, we may use the terms \emph{$\Z^d$-pattern complexity} or \emph{$\R^d$-pattern complexity}, respectively.
% The \emph{generalized pattern complexity} of a generalized configuration $c$ with respect to a generalized shape $D \Subset \R^d$ is
% $$
% P_c(D) = |\Patt{c}{D}|.
% $$
% (So, we use the same notations for the pattern complexity of configurations and the generalized pattern complexity of generalized configurations. It should be clear from the context which one we mean.)
% The generalized pattern complexity of a set $S \subseteq \R^d$ with respect to a generalized shape $D$ is $P_S(D) = P_{\mathbbm{1}_S}(D)$ where $\mathbbm{1}_S$ is the indicator function of $S$.
% %as defined earlier.
% We say that a configuration $c \in \A^G$ has \emph{low complexity} with respect to a shape $D \Subset G$, respectively, if
% $$
% P_c(D) \leq |D|.
% $$

\begin{definition}\label{def: low complexity}
A configuration $c \in \A^G$ has \emph{low complexity} with respect to a shape $D \Subset G$ if
$$
P_c(D) \leq |D|.
$$
\end{definition}

% If a 1-dimensional configuration has low complexity with respect to the shape $D=\{0,\ldots,n-1\}$, then it is known  to be periodic as the following classic theorem by Morse and Hedlund says.

% \begin{theorem}[Morse-Hedlund theorem \cite{morse-hedlund}]
% Let $c \in \A^{\Z}$ be a 1-dimensional configuration.
% If $P_c(\{0,\ldots,n-1\}) \leq n$ for some $n \in \Z_+$, then $c$ is periodic.
% \end{theorem}

% \noindent
% In fact, it is easily seen that in the 1-dimensional setting low complexity with respect to any shape implies periodicity.
% The 2-dimensional analogy of the Morse-Hedlund theorem would say that low-complexity with respect to a rectangle implies periodicity.
% However, this is still an open problem:

% \begin{conjecture}[Nivat's conjecture \cite{Nivat}]
% Let $c \in \A^{\Z^2}$ be a 2-dimensional configuration.
% If there exist positive integers $m$ and $n$ such that
% $$
% P_c(\{0,\ldots,m-1\} \times \{0,\ldots,n-1\}) \leq mn,
% $$
% then $c$ is periodic.
% \end{conjecture}

% \noindent
% For dimensions $d\geq 3$ the analogy of Nivat's conjecture does not hold \cite{Nivat3d}.

% Later we will see that the above threshold of low complexity implies a form of weak periodicity.

The pattern complexity of a set $S \subseteq \R^d$ with respect to a shape $D \Subset \R^d$ is $P_S(D) = P_{\mathbbm{1}_S}(D)$.
% where $\mathbbm{1}_S$ is the indicator function of $S$.
%as defined earlier.
Note that we have
$$
P_S(D) = | \{ \left (S \cap (D + \bt) \right ) - \bt \mid \bt \in \R^d \}| = |\{(S- \bt) \cap D \mid \bt \in \R^d\}|.
$$
By a slight abuse of terminology we call any set $D' \Subset \R^d$ a \emph{$D$-pattern of the set $S$} if $D' = S\cap(D+\bt)$ for some $\bt$. 
    % We call the set $S \cap (D + \vec{t})$ a \emph{$D$-pattern} of $S$ at $\vec{t}$, and we say that two $D$-patterns $S \cap (D + \vec{t}_1)$ and $S \cap (D + \vec{t}_2)$ are \emph{translation equivalent} if
    % $(S \cap (D + \vec{t}_1)) - \vec{t}_1 = (S \cap (D + \vec{t}_2))- \vec{t}_2$ and that they are \emph{translation inequivalent} if
    % $(S \cap (D + \vec{t}_1)) - \vec{t}_1 \not = (S \cap (D + \vec{t}_2)) - \vec{t}_2$.
Thus, $P_S(D)$ counts 
% the number of inequivalent $D$-patterns of $S$, that is, 
the number of $D$-patterns of $S$ up to translation.
% if $D' = (S \cap (D + \bt)) - \bt$ for some $\bt$ where $D$ is a generalized shape.
We denote by
$\Patt{S}{D} = \{(S \cap (D+\bt))-\bt \mid \bt \in \R^d \}$ the set of all $D$-patterns of $S$ translated to origin.

\begin{remark}
Above we defined pattern complexity only for configurations.
For non-finitary functions $c \in \C^G$, the pattern complexity would be always infinite.
\end{remark}

% \begin{remark}
% If  $c$ is a configuration and $D$ is a shape and $c'$ is a generalized configuration presenting $c$ defined from Remark \ref{remark:configurations as generalized configurations}, then $P_{c'}(D)=1+P_c(D)$.
% \end{remark}

\subsection{Polynomials and $\R^d$-polynomials}

A function $f \in \C^G$ is \emph{finitely supported} if its support is a finite set.
The (discrete) \emph{convolution} $f * g$ of a finitely supported function $f \in \C^G$ and any function $g \in \C^G$ is defined such that
$$
(f * g)(\bu) = \sum_{\bv \in \supp(f)} f(\bv) g(\bu - \bv)
$$
for all $\bu \in G$.
% The product of $f$ and $g$ is their convolution.

If $G = \Z^d$, we identify any finitely supported function $f \in \C^{\Z^d}$ with the Laurent polynomial 
$$
f = f(X) = \sum_{\bu = (u_1,\ldots,u_d) \in \supp(f)} f_{\bu} x_1^{u_1}\cdots x_d^{u_d} = \sum_{\bu \in \supp(f)} f_{\bu} X^{\bu}
$$
in $d$ variables $X = (x_1,\ldots,x_d)$ where we have used the standard notation $X^{\bu} = x_1^{u_1} \cdots x_d^{u_d}$ for $\bu = (u_1,\ldots,u_d)$.
Conversely, any Laurent polynomial is identified with a function whose value at $\bu$ is the coefficient of the term $X^{\bu}$.
We usually drop the word ``Laurent'' and instead talk simply about polynomials even though we mean Laurent polynomials.
As usual, we denote by $\C[X^{\pm1}] = \C[x_1^{\pm1},\ldots,x_d^{\pm1}]$ the set of all complex polynomials in $d$ variables $X=(x_1,\ldots,x_d)$.
The set of all integral polynomials is denoted by $\Z[X^{\pm1}]$.

We define an \emph{$\R^d$-polynomial} to be any finitely supported function $f \in \C^{\R^d}$.
We use the convenient ``polynomial notation'' also for $\R^d$-polynomials, that is, we denote
$$
f=f(X) = \sum_{\bu \in \supp(f)} f_{\bu} X^{\bu}.
$$

\section{Symbolic dynamics} \label{sec: symbolic dynamics}

\subsection{Basics}

In this section we review some concepts of symbolic dynamics where the basic objects of study are configurations and subsets of configurations.
In this section we consider $\Z^d$-configurations.
For reference, see {\it e.g.} \cite{tullio, kurka,lindmarcus}.

% \begin{definition}
% The \emph{pattern complexity} $P_c(D)$ of a configuration $c \in \A^{\Z^d}$ with respect to a shape $D$ is the number of distinct $D$-patterns that $c$ contains, that is, $P_c(D) = |\Patt{c}{D}|$.
% \end{definition}

% \noindent
% We say that $c$ has \emph{low complexity} with respect to $D$ if
% $$
% P_c(D) \leq |D|.
% $$

% \begin{example}
%   It is known that if a one-dimensional configuration has low complexity with respect to any shape
%   Nivat's conjecture..
% \end{example}

%%%%%%%% TÄMÄ EHKÄ MYÖHEMMÄKSI

The configuration space $\A^{\Z^d}$ can be made a compact topological space by endowing $\A$ with the discrete topology and considering the product topology it induces on $\A^{\Z^d}$ --- this topology is called the \emph{prodiscrete topology}.
Our topology is
% metrizable and indeed it is
induced by a metric where two configurations are close if they agree on a large area around the origin.
In particular, in this topology every sequence of configurations has a converging subsequence.

A subset $\mathcal{S} \subseteq \A^{\Z^d}$ of the configuration space is a \emph{subshift} if it is topologically closed and translation-invariant meaning that if $c \in \mathcal{S}$, then for all $\vec{t} \in \Z^d$ also $\tau^{\vec{t}}(c) \in \mathcal{S}$.
Equivalently, subshifts can be defined by using forbidden patterns:
Given a set $F \subseteq \A^*$ of \emph{forbidden} patterns, the set
$$
X_F=\{c \in \A^{\Z^d} \mid  \Lang{c} \cap F = \emptyset \}
$$
of configurations that avoid all forbidden patterns
is a subshift. Moreover, every subshift is obtained by forbidding some set of finite patterns.
If $F \subseteq \A^*$ is finite, then $X_F$ is a \emph{subshift of finite type} (SFT).

The \emph{orbit} of a configuration $c$ is the set $\mathcal{O}(c) = \{ \tau^{\vec{t}}(c) \mid \vec{t} \in \Z^d \}$ of its every translate.
The
\emph{orbit closure} $\overline{\mathcal{O}(c)}$ is the topological closure of its orbit under the prodiscrete topology.
It is the smallest subshift that contains $c$.
It consists of all configurations $c'$ such that $\Lang{c'}\subseteq \Lang{c}$.

\subsection{An algebraic approach to multidimensional symbolic dynamics}

In \cite{icalp} the second author and Szabados considered an algebraic approach to multi-dimensional symbolic dynamics.
In this approach a function $c \in \C^{\Z^d}$ is written as the formal power series
$$
c(X) 
% = \sum_{\bu = (u_1,\ldots,u_d) \in \Z^d} c_{\bu} x_1^{u_1} \cdots x_d^{u_d} 
= 
\sum_{\bu \in \Z^d} c_{\bu} X^{\bu}
$$
in $d$ variables $X=(x_1,\ldots,x_d)$.
% If $\A$ is a subset of some $R$-module $M$, then one may consider the multiplication of $c=c(X)$ by a (Laurent) polynomial $f=f(X)$ whose coefficients are in $R$.

The product $fc$ of a (Laurent) polynomial $f \in \C[X^{\pm1}]$ and a formal power series $c \in \C[[X^{\pm1}]]$ is defined naturally such that its coefficient at $\bu$ is
$$
(fc)_{\bu} = \sum_{\bv \in \supp(f)} f_{\bv} c_{\bu - \bv}.
$$
% In a typical situation we have $M=R=\Z$.
% Then the coefficient $(fc)_{\bu}$ is the convolution of $f$ and $c$ at $\bu$ and may be denoted by $(f * c)(\bu)$.
Thus, $fc$ is the convolution $f * c$ where $f$ and $c$ are regarded as functions in $\C^{\Z^d}$ with the additional assumption that $f$ is finitely supported.

% In the rest of the paper by a configuration we mean a finitary and integral function $c \in \A^{\Z^d}$, that is, we have $\A \Subset \Z$.

A polynomial $f$ \emph{annihilates} (or is an annihilator of) a function $c \in \C^{\Z^d}$ if $fc=0$.
Clearly, $c$ is $\bv$-periodic if and only if it 
is annihilated by the difference polynomial $X^{\bv}-1$.
A polynomial $f$ \emph{periodizes} (or is a \emph{periodizer} of)  $c$ if $fc$ is strongly periodic. 
Clearly, $c$ has a non-trivial (= non-zero) annihilator if and only if it has a non-trivial periodizer.
Indeed, any annihilator is a periodizer and conversely if $c$ has a non-trivial periodizer $f$, then it has a non-trivial annihilator $(X^{\bv}-1)f$ for some $\bv \neq \vec{0}$.

\begin{remark}
If an integral configuration $c \in \Z^{\Z^d}$ has a non-trivial annihilator $f \in \C[X^{\pm1}]$, then it has an integral annihilator $f' \in \Z[X^{\pm1}]$ that has the same support as $f$ \cite{DLT_invited}.
\end{remark}

\begin{remark}
  In this paper the coefficients of configurations and hence the formal power series presenting them are complex numbers.
  However,
  this is not necessarily always the case.
  The algebraic approach can be applied to more general settings.
  The coefficients of the configurations may belong to any abelian group $M$.
  Then the coefficients of the polynomials belong to a ring $R$ such that $M$ is an $R$-module.
  Under these assumptions the convolution multiplication is defined.
  For example, in \cite{DLT_special_issue} we studied a setting with $M = \Z^n$ and $R = \Z^{n \times n}$.
\end{remark}

\subsubsection*{Line polynomials}

A polynomial $f$ is a \emph{line polynomial} if its support contains at least two points, and it is contained in a line, that is, $\supp(f) \subseteq \bu + \Q \bv$ for some $\bu, \bv \in \Z^d$.
In this case we say that $f$ is a \emph{line polynomial in direction} $\bv$ and call $\bv$ a \emph{direction} of $f$.
% Two non-zero vectors $\vec{v},\vec{v}'\in\Z^d$ are \emph{parallel} (over $\Q$) if $\vec{v}'\in \Q \vec{v}$ (and $\vec{v}\in \Q \vec{v}'$), and
% Clearly if $\bv$ is a direction of a line polynomial $f$, then also any vector parallel to $\bv$ over $\Q$ is also a direction of $f$.
% Such $\bv$ (which is non-zero) is called a \emph{direction} of $f$ and we may say that $f$ is a \emph{line polynomial in direction $\bv$}.
% As usual, we say that two non-zero vectors $\bv,\bv' \in \Z^d$ are \emph{parallel} if $\bv' \in \Q \bv$ (and vice versa), {\it i.e.}, if they are linearly dependent over $\Q$.
% Otherwise, they are \emph{non-parallel} in which case they are linearly independent over $\Q$.
Clearly, any line polynomial in direction $\bv$ is also a line polynomial in any parallel non-zero direction $\bv'$ over $\Q$.
% A vector $\vec{v}\in\Z^d$ is \emph{primitive} if its components are pairwise relatively prime. If
% $\vec{v}$ is primitive, then $\Q \vec{v}\cap\Z^d = \Z \vec{v}$.
% For any non-zero $\vec{v}\in\Z^d$ there exists a parallel primitive vector $\vec{v}'\in\Z^d$. It follows that we may assume the vector $\vec{v}$ in the definition of a line polynomial $f$ to be primitive so that $\supp(f) \subseteq \vec{u} + \Z \vec{v}$.

Non-trivial difference polynomials are line polynomials. 
Hence, periodicity of a configuration implies annihilation by a line polynomial.
In fact, also the converse is quite easily seen to be true. % based on the finiteness of $\A$.
Indeed, annihilation by a line polynomial of a configuration defines a recurrence relation on the configuration which implies periodicity due to the finiteness of $\A$.
Thus, a configuration is periodic if and only if it has a line polynomial annihilator.
In particular, any one-dimensional configuration with a non-trivial annihilator is periodic.
%true for general M and R, right? at least if $R$ is a ring

% \begin{lemma}
%   A configuration is periodic if and only if it is annihilated by aline polynomial.
% \end{lemma}

\subsection{Some known results}

Let us now review some known results relevant to us.
First, the low complexity assumption $P_c(D) \leq |D|$ implies the existence of non-trivial periodizers and annihilators:

\begin{lemma}[\cite{icalp}]
  \label{th:low_complexity}
  Let $c \in \A^{\Z^d}$ be a configuration that has low complexity with respect to shape $D = \{\bd_1,\ldots,\bd_m\} \Subset \Z^d$, that is, $P_c(D) \leq |D|$.
  % $D = \{ \bd_1, \ldots , \bd_m$.
  Then $c$ has a non-trivial annihilator. 
  More precisely, $c$ has a periodizer of the form 
  $$
  a_1 X^{-\bd_1} + \ldots + a_m X^{-\bd_m}
  $$
   for some non-zero $(a_1,\ldots,a_m) \in \C^m$.
\end{lemma}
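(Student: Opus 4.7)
The plan is to parametrize candidate periodizers of the prescribed shape and extract a non-zero solution by a dimension count, then cash this in for an annihilator using the last remark of the excerpt. Let $f = a_1 X^{-\bd_1} + \cdots + a_m X^{-\bd_m}$ with unknowns $(a_1,\ldots,a_m) \in \C^m$. By the definition of the convolution product, the coefficient
\[
(fc)_{\bu} \;=\; \sum_{i=1}^{m} a_i\, c_{\bu+\bd_i}
\]
depends on $c$ only through the $D$-pattern of $c$ at position $\bu$, i.e.\ through the tuple $\bigl(c_{\bu+\bd_1},\ldots,c_{\bu+\bd_m}\bigr)$. Hence any two positions of $\Z^d$ sharing the same $D$-pattern are assigned the same value by $fc$, regardless of which $\vec{a}$ one picks.

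Next I would enumerate the distinct $D$-patterns appearing in $c$ as $p_1,\ldots,p_k$, where by the low-complexity hypothesis $k = P_c(D) \leq m$. For each $j$, set $v_j(\vec{a}) = \sum_{i=1}^{m} a_i\, p_j(\bd_i)$, so that $fc$ takes the value $v_j(\vec{a})$ on every cell whose $D$-pattern is $p_j$. The key point is not to force $fc = 0$ directly (which would impose $k$ homogeneous conditions and can fail when $k=m$), but only to force $fc$ to be a \emph{constant} function: the conditions $v_1(\vec{a}) = v_2(\vec{a}) = \cdots = v_k(\vec{a})$ form a homogeneous linear system of only $k-1$ equations in the $m$ unknowns $a_1,\ldots,a_m$. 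Since $k-1 \leq m-1 < m$, this system has a solution space of dimension at least $m-(k-1) \geq 1$, so a non-zero $\vec{a}$ exists making $fc$ constant.

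A constant $\Z^d$-configuration is strongly periodic, so the resulting polynomial $f$ is a non-trivial periodizer of $c$ of exactly the announced support $\{-\bd_1,\ldots,-\bd_m\}$. The promised non-trivial annihilator is then immediate from the observation already noted in the excerpt: if $fc$ is $\bv$-periodic for any non-zero $\bv \in \Z^d$, then $(X^{\bv}-1)f$ annihilates $c$, and since $f \neq 0$ and $X^{\bv}-1 \neq 0$, this product is non-trivial.

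The only delicate point I anticipate is precisely the conceptual switch from "annihilator" to "periodizer" at the dimension-counting step; once one realizes that demanding $fc$ be constant costs only $k-1$ linear conditions rather than $k$, the argument becomes routine linear algebra together with the periodizer-to-annihilator trick. Everything else—the pattern-dependence of $(fc)_\bu$, strong periodicity of constants, and the form of the difference polynomial—is essentially definitional.
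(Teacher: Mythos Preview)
Your proof is correct and follows essentially the same approach as the paper's proof of the $\R^d$ generalization (Lemma \ref{lemma: non-trivial annihilator}): both obtain a non-trivial periodizer by a dimension count ensuring that $fc$ is constant. The only cosmetic difference is packaging---the paper encodes the constraint ``$fc$ is constant'' by taking the orthogonal complement in $\C^{m+1}$ of the span of the augmented pattern vectors $(1, c_{\bd_1+\bv}, \ldots, c_{\bd_m+\bv})$, whereas you write out the $k-1$ equalities $v_1(\vec a)=\cdots=v_k(\vec a)$ explicitly; these are equivalent formulations of the same linear-algebra fact.
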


\noindent
If a configuration has a non-trivial periodizer, then it has an annihilator which is a product of difference polynomials:

%Consequently, a configuration with a non-trivial annihilator is a sum of periodic functions.
%More precisely, we have the following theorems. 

\begin{theorem}[\cite{icalp}, \cite{fullproofs}] \label{special annihilator configurations}
  Let $c \in \A^{\Z^d}$ be an integral configuration with a non-trivial integral annihilator $f$.
  Then for all $\bu \in \supp(f)$ the configuration $c$ has an annihilator of the form
  $$
  (X^{\vec{v}_1} - 1) \cdots (X^{\vec{v}_m} - 1)
  $$
  where the vectors $\vec{v}_1, \ldots , \vec{v}_m$ are pairwise linearly independent and each $\bv_i$ is parallel to $\bu_i - \bu$ for some $\bu_i \in \supp(f) \setminus \{\bu\}$.
  % Moreover, $c$ is a sum of (not necessarily finitary) functions $c_1, \ldots , c_m \in \Z^{\Z^d}$ such that each $c_i$ is $\vec{v}_i$-periodic.
\end{theorem}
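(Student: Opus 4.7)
The plan is to argue by induction on $|\supp(f)|$. After multiplying $f$ by the unit monomial $X^{-\bu} \in \C[X^{\pm 1}]$ (which does not change the annihilation property), one may assume $\bu = \vec{0}$ and $\vec{0} \in \supp(f)$; the other support points are then exactly the vectors $\bu_i - \bu$ for $\bu_i \in \supp(f) \setminus \{\bu\}$. It then suffices to produce an annihilator of $c$ of the form $\prod_{j=1}^m (X^{\bv_j}-1)$ with the $\bv_j$ pairwise linearly independent and each parallel to some point of $\supp(f) \setminus \{\vec{0}\}$. If $|\supp(f)| = 1$, then $f$ is a non-zero constant, so $fc = 0$ forces $c = 0$, which is annihilated by the empty product ($m = 0$), and the claim holds vacuously.

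If $f$ is a line polynomial in direction $\bw$, then $fc = 0$ is a non-trivial linear recurrence for $c$ along every line parallel to $\bw$; since $\A$ is finite, the observation in the preceding subsection on line polynomial annihilators yields that $c$ has an annihilator $X^{\bv_1}-1$ with $\bv_1$ parallel to $\bw$. Since every $\bu_i - \bu$ with $\bu_i \in \supp(f) \setminus \{\bu\}$ is parallel to $\bw$, this is the desired conclusion with $m = 1$.

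For the inductive step, suppose $f$ is not a line polynomial, so $\supp(f) \setminus \{\vec{0}\}$ contains two linearly independent vectors. The strategy is to produce, from $f$, a strictly simpler integral annihilator of a derived configuration $c' = (X^{\bv_1}-1)c$ (where $\bv_1$ is chosen parallel to some element of $\supp(f) \setminus \{\vec{0}\}$), apply the induction hypothesis to $c'$ to obtain an annihilator $(X^{\bv_2}-1)\cdots(X^{\bv_m}-1)$ of $c'$ with each $\bv_j$ parallel to some element of $\supp(f)$, and multiply back to obtain $\prod_{j=1}^m (X^{\bv_j}-1) \cdot c = 0$. Pairwise linear independence of the accumulated directions can be enforced by choosing $\bv_1$ outside the span of the inductively produced $\bv_j$'s, which is possible because $\supp(f) \setminus \{\vec{0}\}$ contains at least two non-collinear vectors.

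The main obstacle is the reduction step itself: from a non-line integral annihilator $f$ of $c$, one must produce a strictly simpler integral annihilator of $c' = (X^{\bv_1}-1)c$ whose support directions still lie (up to parallelism) in $\supp(f)$, so that the induction hypothesis returns the correct directions. This appears to rely on the UFD structure of $\Z[X^{\pm 1}]$ together with a careful combinatorial analysis of how $\supp(f)$ interacts with its shift by $\bv_1$; this is the content of the more technical parts of \cite{icalp, fullproofs}. The integrality hypothesis on both $c$ and $f$ is essential, both to preserve integer coefficients throughout the reduction and to guarantee the existence of the final product-of-differences annihilator with the constrained directions.
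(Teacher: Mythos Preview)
Your outline has a genuine gap at the point you yourself flag as the ``main obstacle'': from a non-line integral annihilator $f$ of $c$ you need, for some $\bv_1$ parallel to a support vector of $f$, a \emph{strictly simpler} integral annihilator of $c'=(X^{\bv_1}-1)c$ whose support directions remain inside those of $f$. You do not prove this, and your appeal to ``the UFD structure of $\Z[X^{\pm 1}]$'' is not how the argument in \cite{icalp,fullproofs} goes. The annihilator ideal of $c$ is just an ideal in a Laurent polynomial ring; unique factorization of elements says nothing about why this ideal should contain a polynomial with fewer support directions once you pass to $c'$. No amount of ``careful combinatorial analysis of how $\supp(f)$ interacts with its shift'' will produce the reduction without a new idea.

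The missing ingredient is the \emph{dilation lemma} (stated in this paper just below the theorem): there is an integer $r$ such that every dilation $f(X^k)$ with $\gcd(k,r)=1$ also annihilates $c$. This is proved by reducing modulo primes and using a Frobenius/Fermat argument, and it is where integrality of both $c$ and $f$ is actually used. Once one has infinitely many dilated annihilators, a direct linear-algebra combination of them yields the explicit annihilator
\[
\prod_{\bv\in\supp(f)\setminus\{\bu\}}\bigl(X^{\,r(\bv-\bu)}-1\bigr),
\]
which is Theorem~\ref{thm: special annihilator precise formulation}; no induction on $|\supp(f)|$ is involved. The pairwise linear independence is then obtained afterwards by merging parallel factors, as in the ``moreover'' part of the proof of Theorem~\ref{main result 1}. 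Note also that your proposed way to enforce pairwise independence is circular: you want to choose $\bv_1$ outside the span of $\bv_2,\ldots,\bv_m$, but those vectors are only produced by the induction \emph{after} $\bv_1$ has been fixed.
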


\noindent
The multiplication of $c$ by a difference polynomial can be seen as a ``discrete derivation'' of $c$.
The above theorem says then that if an integral configuration $c\in \A^{\Z^d}$ has a non-trivial periodizer, then there is a sequence of derivations which annihilates $c$.
So, by using an an ``integration'' argument step by step we have the following periodic decomposition theorem.

\begin{theorem}[Periodic decomposition theorem \cite{fullproofs}]\label{thm: regular decomposition theorem}
  Let $c \in \A^{\Z^d}$ be an integral configuration with a non-trivial integral annihilator $f$.
  Then $c=c_1+\ldots+c_m$ where $c_1,\ldots,c_m \in \Z^{\Z^d}$ are periodic in pairwise linearly independent directions.
\end{theorem}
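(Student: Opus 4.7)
The plan is to first upgrade the given annihilator to the special product form provided by Theorem~\ref{special annihilator configurations}, and then induct on the number of difference factors, peeling off one periodic summand at a time. Concretely, fix any $\bu \in \supp(f)$ and invoke Theorem~\ref{special annihilator configurations} to obtain an annihilator of $c$ of the form $D_1 \cdots D_m$ with $D_i = X^{\bv_i}-1$ and $\bv_1,\ldots,\bv_m$ pairwise linearly independent. I would prove, by induction on $m$, the slightly stronger assertion that one can take each summand $c_i$ to be $\bv_i$-periodic; the pairwise linear independence of the periods is then automatic. The base case $m=1$ is immediate since $D_1 c = 0$ is exactly $\bv_1$-periodicity of $c$.

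For the inductive step, set $g := D_m c$, which is integer-valued and annihilated by $D_1 \cdots D_{m-1}$, so by the induction hypothesis $g = g_1 + \cdots + g_{m-1}$ with each $g_i$ integer-valued and $\bv_i$-periodic. Granted that for each $i < m$ one can produce an integer-valued and $\bv_i$-periodic $c_i$ with $D_m c_i = g_i$, the configuration $c_m := c - c_1 - \cdots - c_{m-1}$ is then integer-valued and satisfies $D_m c_m = g - (g_1+\cdots+g_{m-1}) = 0$, hence is $\bv_m$-periodic, and the decomposition is complete.

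The main obstacle is thus the construction of $c_i$ from $g_i$ inside the inductive step. Since $\bv_i$ and $\bv_m$ are linearly independent, the subgroup $\Z \bv_i$ acts freely on the set of $\bv_m$-lines $\bu_0 + \Z \bv_m$ in $\Z^d$. On each such line the equation $D_m c_i = g_i$ is a first-order integer recurrence, so fixing the value of $c_i$ at one point of the line determines $c_i$ along the whole line. I would choose one representative $\bv_m$-line in each $\Z \bv_i$-orbit, assign $c_i$ the value $0$ at one point of that representative, propagate along $\bv_m$ via the recurrence, and then extend to the remaining lines of the orbit by demanding $c_i(\bu + \bv_i) = c_i(\bu)$. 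The $\bv_i$-periodicity of $g_i$ is precisely what ensures that this extension stays consistent with the recurrence on parallel lines, while integer-valuedness is preserved because the recurrence has integer data and integer coefficients. Verifying this compatibility, together with the independence from the choice of representative lines, is the technical heart of the argument.
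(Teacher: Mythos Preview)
Your proposal is correct and follows essentially the same route as the paper (and the original source it cites): reduce to a product-of-differences annihilator via Theorem~\ref{special annihilator configurations}, then induct on the number of factors, constructing at each step a $\bv_i$-periodic integral ``anti-derivative'' of each $g_i$ under $D_m$ --- the paper packages exactly this construction as Lemma~\ref{genlemma1} and the induction as Lemma~\ref{genlemma2} (stated there in the $\R^d$ setting). Your description via $\Z\bv_i$-orbits of $\bv_m$-lines is just a rephrasing of the paper's coset-by-coset definition of the anti-derivative.
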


% \noindent
% The proof of the above theorem is based on Hilbert's Nullstellensatz.

% In \cite{icalp,kari-szabados-multidimensional_words} it was given an example of a configuration (the \emph{snowflake configuration}) with non-trivial annihilators which is not a sum of finitely many periodic configurations.
A \emph{dilation} of a polynomial $f(X)$ is a polynomial of the form $f(X^k)$ for some integer $k$.
The following lemma is crucial in the proof of Theorem \ref{special annihilator configurations} and in our forthcoming considerations.

\begin{lemma}[The dilation lemma \cite{fullproofs}]
  Let $c \in \A^{\Z^d}$ be an integral configuration, and let $f$ be a non-trivial 
  integral annihilator of $c$. 
  There exists a positive integer $r$ such that for every positive integer $k$ with $\gcd(k,r)=1$ also $f(X^k)$ annihilates $c$.
\end{lemma}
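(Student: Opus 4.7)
The plan is to combine a Frobenius-type identity (the \emph{freshman's dream}) with a coefficient boundedness argument that lifts annihilation modulo a prime to exact annihilation over $\Z$. The key algebraic identity is that for every prime $p$ and every $f \in \Z[X^{\pm 1}]$,
$$
f(X)^p \equiv f(X^p) \pmod{p},
$$
which follows by expanding $f(X)^p$ multinomially: every mixed multinomial coefficient is divisible by $p$, leaving $\sum_\bu f_\bu^p X^{p\bu}$, and Fermat's little theorem then gives $f_\bu^p \equiv f_\bu \pmod p$ for each integer $f_\bu$.

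Since $f c = 0$, we also have $f^p c = 0$ as an identity over $\Z$. Reducing modulo $p$ and substituting the identity above yields that every coefficient of $f(X^p) c$ is divisible by $p$. To promote this congruence to exact equality I would bound the coefficients directly: let $N = \sum_{\bu \in \supp(f)} |f_\bu|$ and $M = \max_{a \in \A} |a|$. Every coefficient of $f(X^p) c$ is a sum of at most $|\supp(f)|$ products of the form $f_\bu \cdot c_\bw$, so its absolute value is at most $NM$. Choosing $p > NM$ then forces $f(X^p) c = 0$ exactly.

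Finally, I would set $r$ to be the product of all primes not exceeding $NM$. For any positive integer $k$ with $\gcd(k,r) = 1$, factor $k = p_1 \cdots p_j$ into (not necessarily distinct) primes, each exceeding $NM$, and argue by induction on $j$: the polynomial $f(X^{p_1 \cdots p_i})$ has exactly the same nonzero coefficients as $f$ (merely placed on a dilated sublattice), so the coefficient bound $NM$ is invariant under dilation, and one applies the single-prime case with $p_{i+1}$ at each step. I expect the main obstacle to be precisely this lifting from congruence mod $p$ to exact equality: it depends on the crucial observation that the coefficients of $f(X^m) c$ are bounded uniformly in $m$ (since dilation only relocates the coefficients of $f$), so that once the prime in play exceeds the bound, divisibility forces the coefficients to vanish.
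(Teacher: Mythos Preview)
Your argument is correct and is essentially the standard proof from \cite{fullproofs} that the paper cites: the Frobenius congruence $f(X)^p\equiv f(X^p)\pmod p$ combined with the uniform coefficient bound $s=c_{\max}\sum_{\bv}|f_\bv|$ (your $NM$), followed by iteration along the prime factorization of $k$. The only cosmetic difference is that the original takes $r=s!$ while you take $r$ to be the product of primes at most $s$; since these have the same prime divisors, the coprimality condition $\gcd(k,r)=1$ is identical in both cases.
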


\noindent
Let us call a number $r$ that has the property of the above lemma a \emph{dilation constant} of $c$ with respect to $f$.
From the proof of the dilation lemma we adapt the following result.
For completeness, we provide a short description of the proof.

\begin{lemma}[Adapted from \cite{fullproofs}] \label{lemma2}
  Let $\mathcal{I}$ be an arbitrary index set, and let $\left ( c^{(i)} \right ) _{i \in \mathcal{I}}$ be a collection of integral configurations over the same alphabet $\A \Subset \Z$.
  If $f$ is a non-trivial integral annihilator of $c^{(i)}$ for every $i \in \mathcal{I}$, then the configurations in the collection have a common dilation constant with respect to $f$.
\end{lemma}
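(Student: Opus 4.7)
The plan is to re-examine the proof of the dilation lemma in \cite{fullproofs} and verify that the dilation constant it yields depends only on the annihilator $f$ and the alphabet $\A$, not on the specific configuration being annihilated. Since every $c^{(i)}$ shares the same $f$ and the same $\A$, the same constant will then serve all of them simultaneously.

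First I would recall the key Frobenius/Fermat step. For any prime $p$, the integrality of the coefficients of $f$ gives $f(X^p) \equiv f(X)^p \pmod{p}$. Combined with $f c^{(i)} = 0$, this yields
\[
f(X^p)\, c^{(i)} \;\equiv\; f(X)^{p-1}\bigl(f(X)\, c^{(i)}\bigr) \;=\; 0 \pmod{p},
\]
so every coefficient of $f(X^p)\, c^{(i)}$ is an integer divisible by $p$. On the other hand, each such coefficient is $\sum_{\bv \in \supp(f)} f_{\bv}\, c^{(i)}_{\bu - p\bv}$, and its absolute value is bounded by
\[
M \;:=\; \Bigl(\sum_{\bv \in \supp(f)} |f_{\bv}|\Bigr) \cdot \max_{a \in \A} |a|,
\]
a quantity depending only on $f$ and $\A$. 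Hence whenever $p > M$, each such coefficient is forced to be zero, giving $f(X^p)\, c^{(i)} = 0$.

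Next I would handle a general $k$ coprime to $r := \prod_{p \text{ prime},\, p \leq M} p$ by iteration. Writing $k = p_1 p_2 \cdots p_j$ as a product of primes with multiplicity, every $p_\ell$ exceeds $M$. The polynomial $f(X^{p_1 \cdots p_{\ell-1}})$ has exactly the same multiset of coefficient values as $f$ — only the positions of its support are dilated — so the bound $M$ applies to it verbatim. Thus the Frobenius argument above, applied with annihilator $f(X^{p_1 \cdots p_{\ell-1}})$ and prime $p_\ell$, gives $f(X^{p_1 \cdots p_\ell})\, c^{(i)} = 0$ at each step, and after $j$ steps we arrive at $f(X^k)\, c^{(i)} = 0$.

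Because $M$, and hence $r$, is defined purely in terms of $f$ and $\A$, the same $r$ serves as a dilation constant for every $c^{(i)}$ with respect to $f$. I do not anticipate a genuine obstacle: the work lies entirely in observing that the proof of the dilation lemma never invokes any property of $c$ beyond the $\ell^{\infty}$ bound furnished by the alphabet, a fact that becomes transparent once the argument is organized as above.
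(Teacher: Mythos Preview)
Your proposal is correct and follows essentially the same approach as the paper: both observe that the dilation constant produced by the proof of the dilation lemma in \cite{fullproofs} depends only on the quantity $M = \bigl(\sum_{\bv \in \supp(f)} |f_{\bv}|\bigr)\cdot \max_{a\in\A}|a|$, hence only on $f$ and $\A$. The paper simply cites this fact and takes $r = M!$, whereas you reproduce the Frobenius--iteration argument explicitly and take $r = \prod_{p\text{ prime},\,p\le M} p$; these two choices of $r$ have the same set of coprime integers, so the difference is cosmetic.
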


\begin{proof}
Let $c_{\text{max}}$ be the maximum absolute value of the coefficients of the configurations $c^{(i)}$.
Since the configurations $c^{(i)}$ are over the same alphabet, such number exists.
Let $f = \sum_{\bv \in \supp(f)} f_{\bv}X^{\bv}$ and define $s= c_{\text{max}} \sum_{\bv \in \supp(f)} |f_{\bv}|$.
In the proof of the dilation lemma in \cite{fullproofs}, it was shown that $r=s!$ is a dilation constant of any $c^{(i)}$.
The claim follows.
\end{proof}

\noindent
In fact, in the proof of Theorem \ref{special annihilator configurations} they proved more precisely the following result.

% \begin{lemma} [Adapted from \cite{fullproofs}] \label{lemma1}
%   Let $I \leq \Z[X^{\pm1}]$ be an ideal and let $r$ be a positive integer. If $I$ contains a non-trivial integral polynomial $f(X)$ and its dilations $f(X^{1 + ir})$ for every $i \in \N$, then for all $\bu \in \supp(f)$ the ideal $I$ contains a polynomial of the form
%   $$
%   (X^{\vec{v}_1} - 1) \cdots (X^{\vec{v}_m} - 1)
%   $$
%   such that each $\bv_i$ is parallel to
%   $\bu_i - \bu$ for some $\bu_i \in \supp(f) \setminus \{\bu\}$.
%   Moreover, the above polynomial depends only on $r$ and $f$.
%   % , and it
%   % depends only on $r$ and $f$.
% \end{lemma}

\begin{theorem}[\cite{fullproofs}]
\label{thm: special annihilator precise formulation}
Let $c$ be an integral configuration and $f$ a non-trivial integral annihilator of $c$.
If $r$ is a dilation constant of $c$ with respect to $f$, then for all $\bu \in \supp(f)$ the configuration $c$ is annihilated by the polynomial
$$
\prod_{\bv \in \supp(f) \setminus \{\bu\}} (X^{r(\bv-\bu)}-1).
$$ 
\end{theorem}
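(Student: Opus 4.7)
Proof plan. The plan is to prove this by induction on $m = |\supp(f) \setminus \{\bu\}|$, following the argument that establishes Theorem~\ref{special annihilator configurations} in~\cite{fullproofs} while tracking the explicit constants throughout. Since $X^{-\bu}$ is a unit in $\C[X^{\pm 1}]$, the shifted polynomial $g(X) := X^{-\bu} f(X)$ is a unit multiple of $f$, hence also annihilates $c$, and the identity $g(X^k) = X^{-k\bu} f(X^k)$ shows that $r$ is a dilation constant of $c$ with respect to $g$ as well. Writing $g = g_0 + \sum_{i=1}^m g_i X^{\bv_i}$ with $\bv_i := \bu_i - \bu$, the claim reduces to showing that $\prod_{i=1}^m (X^{r \bv_i} - 1) \cdot c = 0$. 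For the base case $m=1$, the dilation lemma gives $g(X^{r+1}) c = 0$, and subtracting yields
\[
g(X^{r+1}) - g(X) = g_1 X^{\bv_1}(X^{r \bv_1} - 1),
\]
which annihilates $c$; dividing by the unit $g_1 X^{\bv_1}$ gives $(X^{r \bv_1} - 1) c = 0$, as desired.

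For the inductive step $m \geq 2$, I plan to assemble $\prod_{i=1}^m (X^{r \bv_i} - 1)$ as an annihilator of $c$ by combining the family $\{g(X^k) : \gcd(k,r)=1\}$ supplied by the dilation lemma. Taking differences $g(X^{jr+1}) - g(X) = \sum_{i=1}^m g_i X^{\bv_i}(X^{jr \bv_i} - 1)$ for various positive integers $j$ produces further annihilators of $c$ whose $i$-th summand is divisible by $(X^{r \bv_i} - 1)$, since $X^{jr\bv_i} - 1$ is a polynomial multiple of $X^{r\bv_i} - 1$. A Vandermonde-type elimination across sufficiently many such $j$'s---with Lemma~\ref{lemma2} guaranteeing that iteratively constructed integral annihilators share a common dilation constant---then isolates each direction $\bv_i$ in turn and builds the required product as an annihilator of $c$.

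The main obstacle is this elimination step. Subtracting two dilations naively doubles the support of the resulting annihilator, so one cannot induct directly on $|\supp(f)|$ via a single subtraction. Instead, one must exploit the block structure of the difference annihilators---each summand is a line polynomial supported on a single direction $\bv_i$---and peel off the factors $(X^{r \bv_i} - 1)$ one at a time through a carefully orchestrated Vandermonde or telescoping combination. This delicate algebraic manipulation, rather than the base case, is the heart of the argument in~\cite{fullproofs} and is the step most prone to subtle bookkeeping errors if not executed with care.
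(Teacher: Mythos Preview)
The paper does not prove this statement itself; it is quoted from \cite{fullproofs} as the precise content extracted from the proof of Theorem~\ref{special annihilator configurations} there. Your reduction to $\bu=\vec{0}$ and the base case $m=1$ are fine, but the inductive plan has two concrete problems. First, your appeal to Lemma~\ref{lemma2} is misdirected: that lemma says a family of \emph{configurations} sharing a single annihilator $f$ admit a common dilation constant, not that several \emph{annihilators} of one configuration share the original $r$. If you build a new integral annihilator $\tilde h$ by elimination and then re-apply the dilation lemma to $\tilde h$, its dilation constant is computed as $s'!$ with $s'$ depending on the coefficients of $\tilde h$, and there is no reason the original $r$ still works---so the exact factors $X^{r\bv_i}-1$ would not survive the iteration. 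Second, ``isolating one direction'' from $h_j=\sum_i a_i(z_i^j-1)$ (with $a_i=g_iX^{\bv_i}$, $z_i=X^{r\bv_i}$) via a Vandermonde combination does not produce an annihilator of the same shape with $m-1$ summands: eliminating the $l$-th summand multiplies the surviving ones by cross-factors $(z_i-z_l)$, so the support grows and still involves the direction $\bv_l$, and the induction does not close.

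The Vandermonde instinct is nevertheless correct if used \emph{directly} rather than inductively. Set $p(t)=\prod_{i=1}^m(t-z_i)=\sum_{j=0}^m p_jt^j$ with $p_j\in\C[X^{\pm1}]$; then $\sum_{j=0}^m p_jh_j$ is a $\C[X^{\pm1}]$-combination of annihilators of $c$ and equals $\sum_i a_i\bigl(p(z_i)-p(1)\bigr)=-p(1)(g-g_0)$, whence (using $gc=0$ and $g_0\neq 0$) one gets $p(1)\,c=0$, i.e.\ $\prod_i(X^{r\bv_i}-1)\,c=0$ in one stroke. This is not the route taken in \cite{fullproofs}: there one sums $G_N=\sum_{j=0}^{N-1}g(X^{jr+1})$, multiplies by $\prod_i(X^{r\bv_i}-1)$ to obtain $Ng_0\prod_i(X^{r\bv_i}-1)$ plus a term whose action on $c$ stays bounded as $N\to\infty$, and uses finiteness of the alphabet to force the unbounded part to annihilate $c$. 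Both arguments avoid induction on $m$ entirely.
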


\noindent
Note that the polynomial in the above theorem depends only on $f$ and the dilation constant $r$.
Thus, if a family of configurations have a common annihilator $f$ and a common dilation constant with respect to $f$, then they are all annihilated by the polynomial
$$
\prod_{\bv \in \supp(f) \setminus \{\bu\}} (X^{r(\bv-\bu)}-1).
$$ 
from the above theorem for any $\bu \in \supp(f)$.

%\subsection{Nivat's conjecture}
%
%\begin{conjecture}
%
%\end{conjecture}

% \newpage

\section{Delone sets} \label{sec: delone sets}

% \subsection{Definition of Delone sets}

% In the following we define Delone sets and some concepts concerning them.
In this section we define Delone sets and some concepts concerning them.
Our considerations are mostly adapted from \cite{aperiodic1, part1, lagarias_pleasants_2, lagarias_pleasants_ETDS, meyer}.

Let $d$ be a positive integer.
%  -- the \emph{dimension}.
A subset $S \subseteq \R^d$ of the $d$-dimensional Euclidean space is \emph{uniformly discrete} if there exists a positive real number $r$ such that any open Euclidean ball of radius $r$ in $\R^d$ contains at most one point of $S$, and it is \emph{relatively dense} if there exists a positive real number $R$ such that any closed Euclidean ball of radius $R$ in $\R^d$ contains at least one point of $S$.
A subset of $\R^d$ is \emph{locally finite} if its intersection with any bounded set is finite.
Clearly, any uniformly discrete set is also locally finite.
%We denote by $B_T(\vec{u})$ and $B_T^{\circ}(\vec{u})$ the closed and open balls, respectively, of radius $T$ centered at $\vec{u} \in \R^d$ for any real number $T \geq 0$.
%In addition, we use the shorthand notations $B_T = B_T(\vec{0})$ and $B_T^{\circ} = B_T^{\circ}(\vec{0})$.

\begin{definition}
A set $S \subseteq \R^d$ is a ($d$-dimensional) \emph{Delone set} if it is both uniformly discrete and relatively dense.
The largest possible uniform discreteness constant $r$ of a Delone set $S$ is called the \emph{packing radius} of $S$ and the smallest possible relative denseness constant $R$ of $S$ is called the \emph{covering radius} of $S$.
\end{definition}

\subsection{The patch-complexity of Delone sets}

Next, we define the classical measure for local complexity of Delone sets.

\begin{definition}
Let $S \subseteq \R^d$ be a Delone set and let $T \geq 0$ be a real number.
The \emph{$T$-patch} of $S$ centered at $\vec{s} \in S$ is the set 
$$
\mathcal{P}_S(\bs,T) = S \cap B_T(\vec{s}).
$$
\end{definition}

\noindent
We say that that two $T$-patches $\mathcal{P}_S(\bs_1,T)$ and $\mathcal{P}_S(\bs_2,T)$ of $S$ are (translation) \emph{equivalent} if $\mathcal{P}_S(\bs_1,T) - \bs_1 = \mathcal{P}_S(\bs_2,T) - \bs_2$ and denote $\mathcal{P}_S(\bs_1,T) \sim \mathcal{P}_S(\bs_2,T)$.
Otherwise, we say that $\mathcal{P}_S(\bs_1,T)$ and $\mathcal{P}_S(\bs_2,T)$ are \emph{inequivalent} and denote $\mathcal{P}_S(\bs_1,T) \not \sim \mathcal{P}_S(\bs_2,T)$.
Clearly, any $T$-patch of a Delone set is a finite set since Delone sets are uniformly discrete and hence locally finite.

\begin{definition}
The \emph{patch-counting function} $N_S(T)$ of a Delone set $S \subseteq \R^d$ gives for any $T \geq 0$ the number of distinct $T$-patches of $S$ up to translation equivalence, that is,
$$
N_S(T) = | \{ (S \cap B_T(\vec{s})) - \vec{s} \mid \vec{s} \in S \}| = |\{ (S-\bs) \cap B_T \mid \bs \in S \}|.
$$
\end{definition}

\noindent
In general, $N_S(T)$ may be infinite for sufficiently large $T$.

It is known that sufficiently small patch-complexity of a Delone set implies its strong periodicity as the following theorem shows.
% Asymptotically, the theorem implies that if $N_S(T) = o(T)$, then $S$ is strongly periodic.

\begin{theorem}[\cite{lagarias_pleasants_2}] \label{thm:forced periodicity Lagarias}
  Let $S$ be a Delone set with covering radius $R$.
  If 
  $$
  N_S(T) < \frac{T}{2R}
  $$
  for some $T>0$, then $S$ is strongly periodic, that is, it has $d$ linearly independent periods.
\end{theorem}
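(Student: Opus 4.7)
The plan is to follow a Morse--Hedlund style pigeonhole argument, adapted to the Euclidean setting via the covering radius $R$. For a chosen unit vector $\bv \in \R^d$, I would use the covering-radius property to place along a line segment the disjoint closed balls $B_R(\bx_1),\ldots,B_R(\bx_n)$ with $\bx_k = \bx_1 + 2R(k-1)\bv$, where $n$ is chosen so that all the centers lie within a common ball of radius slightly less than $T$. Each such ball contains at least one point of $S$, producing a chain $\bs_1,\ldots,\bs_n \in S$ all lying within the $T$-patch of (say) $\bs_1$. Since by hypothesis $N_S(T) < T/(2R)$, a careful integer-rounding argument gives $n > N_S(T)$, and the pigeonhole principle forces two of the $\bs_k$, say $\bs_i$ and $\bs_j$ with $i<j$, to have translation-equivalent $T$-patches. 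Setting $\bt = \bs_j - \bs_i \neq \vec{0}$ gives a candidate period vector with $||\bt|| \leq T$ and a nonzero component along $\bv$.

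The second step is to promote this local patch equivalence at $\bs_i,\bs_j$ into a global translational symmetry of $S$. The equivalence gives directly $(S \cap B_T(\bs_i)) + \bt = S \cap B_T(\bs_j) \subseteq S$. To extend $\bt$-invariance to all of $\R^d$, one bootstraps by iteration: any $\bs' \in S$ can be reached from $\bs_i$ by a chain of short hops of length at most $2R$ through points of $S$ (by relative density), and at each hop the existing patch equivalence combined with $||\bt|| \leq T$ propagates the translation by $\bt$ one step further. Iterating along such a chain, one concludes that $\bs' + \bt \in S$ for every $\bs' \in S$; swapping the roles of $\bs_i$ and $\bs_j$ gives the reverse inclusion, so $\bt$ is a global period of $S$.

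For strong periodicity, the direction $\bv$ was arbitrary. Repeating the construction with $\bv$ ranging over $d$ directions chosen inside $d$ disjoint open cones that together span $\R^d$ produces period vectors $\bt_1,\ldots,\bt_d$ whose angular deviations from the chosen directions are controlled. By making the cones sufficiently narrow one ensures that $\bt_1,\ldots,\bt_d$ are linearly independent over $\R$, which is precisely strong periodicity.

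The main obstacle is the second step: passing from a local patch symmetry to a genuine global period. The na\"ive bootstrap can fail near the boundary of the region in which $\bt$-invariance is already known, because $T$-patches of chain points near that boundary are not fully controlled by the initial equivalence. A clean way around this is to view $S$ as a point in the compact orbit closure of Delone sets under $\R^d$-translation, and use finiteness of $T$-patch types together with the strictness of the inequality $N_S(T) < T/(2R)$ to conclude that $\bt$-invariance propagates uniformly across $\R^d$. The strict inequality is tight: it yields one extra point in the pigeonhole chain beyond the number of patch types, which is precisely what is needed to force a nontrivial local symmetry that can then be lifted.
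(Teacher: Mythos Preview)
The paper does not give a proof of this theorem; it is stated with a citation to \cite{lagarias_pleasants_2} and used as background. There is therefore no in-paper proof to compare your proposal against.

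On the substance of your sketch: the pigeonhole set-up in step~1 and the direction-varying argument in step~3 are the right ideas, but step~2 --- promoting the local patch coincidence to a global period --- is not established by what you wrote, and you correctly flag it as the main obstacle without actually resolving it. From the equality of $T$-patches at $\bs_i$ and $\bs_j=\bs_i+\bt$ you can conclude, for $\bs'\in S$ with $||\bs'-\bs_i||\leq 2R$, only that $\bs'+\bt\in S$; you do \emph{not} get that the $T$-patches at $\bs'$ and $\bs'+\bt$ coincide (the balls $B_T(\bs')$ and $B_T(\bs_i)$ differ on an annulus of width $2R$), so the inductive hypothesis is not reproduced at the next hop and the chain argument stalls. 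The compactness remark at the end does not repair this: compactness of the hull gives limit points, not periods, and without a further combinatorial ingredient there is no mechanism for turning ``same $T$-patch at two points'' into ``$\bt$ is a global symmetry''. A second, smaller issue: with centers spaced $2R$ apart and radius-$R$ balls, consecutive chain points $\bs_k,\bs_{k+1}$ can be as far as $4R$ apart, so the estimate $||\bt||\leq T$ does not follow directly from $n\approx T/(2R)$ and the bookkeeping in step~1 needs adjustment.
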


% DEFINITION OF A LATTICE!!??
% The following two lemmas were used in the proof of the above theorem.
% A \emph{lattice} is a finitely generated discrete additive subgroup of $\R^d$.
% The first lemma says that if the patch-counting function of a Delone set is bounded, then it is a union of finitely many cosets of a lattice and hence strongly periodic.
% The second lemma says that if the patch-counting function gives the same value for some $T_1$ and $T_2 > T_1 + 2R$, then it remains the same for all $T \geq T_1$ where $R$ is the covering radius of the Delone set in consideration.

% \begin{lemma}[\cite{lagarias_pleasants_2}]
% \label{lemma: lag-plea auxiliary lemma 1}
%   Let $S \subseteq \R^d$ be a Delone set and assume that $N_S(T) \leq N$ for all $T>0$.
%   If $N$ is minimal, that is, if for some $T_0$ we have $N_S(T_0) = N$, then $S$ is union of $N$ cosets of a lattice.
%   Moreover, $S$ is strongly periodic.
% \end{lemma}

% \begin{lemma}[\cite{lagarias_pleasants_2}]
% \label{lemma: lag-plea auxiliary lemma 2}
%   Let $S \subseteq \R^d$ be a Delone set with covering radius $R$.
%   If there exist real numbers $T_1 >0$ and $T_2 > T_1 + 2R$ such that $N_S(T_2) = N_S(T_1)$, then
%   $$
%   N_S(T) = N_S(T_1)
%   $$
%   for all $T \geq T_1$.
%   % and hence $S$ is strongly periodic.
% \end{lemma}

% \noindent
% (Note that in the literature strongly periodic Delone sets are often called \emph{ideal crystals}.)

\begin{remark}\label{remark:ideal crystals}
  % Note that often in the literature, and also in the original formulation of the above theorem, strongly periodic Delone sets are called \emph{ideal crystals}.
  % A Delone set $S \subseteq \R^d$ is an ideal crystal if it has a full rank lattice of translational symmetries, that is, if there exist a finite set $F$ such that $S = F + \Lambda_S$ where $\Lambda_S = \{\bt \in \R^d \mid S + \bt = S \}$ is the set of translational symmetries of $S$.
  In the literature, and in the statement of Theorem \ref{thm:forced periodicity Lagarias}, strongly periodic Delone sets are called \emph{ideal crystals}.
  The standard definition of ideal crystals looks quite different from our definition of strongly periodic Delone sets.
  Indeed, in \cite{lagarias_pleasants_2} a Delone set $S \subseteq \R^d$ is defined to be an ideal crystal if it has a full rank lattice of translational symmetries, that is, if there exists a non-empty finite set $F \Subset \R^d$ such that $S = F + \Lambda_S$ where $\Lambda_S = \{\bt \in \R^d \mid S + \bt = S \}$ is the set of translational symmetries of $S$.
  However, the definitions are equivalent:
  If a Delone set $S$ is strongly periodic, then it has $d$ linearly independent periods $\bv_1,\ldots,\bv_d$ over $\R$.
  Since $S$ is a Delone set, the set
  $$
  F = S \cap \{ a_1 \bv_1+ \ldots + a_d \bv_d \mid a_1,\ldots,a_d \in [0,1) \}
  $$
  is finite and non-empty.
  It follows that $S = F + \Lambda_S$ and hence $S$ is an ideal crystal.
  Conversely, assume that $S$ is an ideal crystal, that is, $S = F + \Lambda_S$ for some $F \Subset \R^d$.
  If $S$ is not strongly periodic, then $\Lambda_S$ is contained in some subspace $V$ of $\R^d$ of dimension at most $d-1$.
  This implies that $S= F + \Lambda_S$ is not relatively dense. A contradiction.
\end{remark}

\subsection{Classes of Delone sets}

There are three well-known classes of Delone sets.
Only two of them are discussed in this paper. However, we will still introduce the third class.

\begin{definition}
A Delone set $S$ is a \emph{Delone set of finite local complexity} if $N_S(T)$ is finite for every $T$.
\end{definition}

\noindent
Sometimes Delone sets of finite local complexity are also called \emph{Delone sets of finite type} since they have only finitely many distinct ``local neighborhoods''. Equivalently, a Delone set $S$ is a Delone set of finite local complexity if the set $S-S$ is locally finite.
In fact, already if the set $(S-S) \cap B_{2R}$ is finite where $R$ is a relative denseness constant of $S$, then $S$ is a Delone set of finite local complexity \cite{part1}.

\begin{definition}\label{def: meyer sets}
A Delone set $S$ is a
\emph{Meyer set} if also $S-S$ is a Delone set.
\end{definition}

% \begin{example}\label{ex:non-periodic FLC and MEYER with small patch-complexity}
%   Consider the Delone set $S \subseteq \R$ defined in Example \ref{ex:non-periodic 1D Delone set with small patch-complexity}.
%   It is a Delone set of finite local complexity since $(S-S) \cap B_{2R}$ is finite.
%   In fact, it is also a Meyer set.
%   Indeed, any point in $S-S$ is of the form $s_m - s_k$ or $s_m - s_k \pm \varepsilon$ 
% \end{example}

\noindent
Clearly, any Meyer set is also a Delone set of finite local complexity.
The class of
Meyer sets was originally introduced by Meyer in \cite{meyer} under the name ``quasicrystal'' as the set of Delone sets $S$ such that $S-S \subseteq S+F$ for some non-empty finite set $F \Subset \R^d$.
However, this definition is equivalent to Definition \ref{def: meyer sets} \cite{Meyer-Lagarias}.
Sometimes Meyer sets are called also \emph{almost lattices} \cite{aperiodic1}.
% Clearly all strongly periodic Delone sets are Meyer sets, but there are also non-periodic Meyer sets.
It is quite easily seen that all strongly periodic Delone sets are Meyer sets.
This is usually considered as folklore.
For completeness, let us provide a proof for this fact.

\begin{lemma}\label{lemma:strongly periodic delone sets are meyer}
  If a Delone set $S \subseteq \R^d$ is strongly periodic, then it is a Meyer set.
\end{lemma}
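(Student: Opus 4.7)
The plan is to invoke the characterization of strongly periodic Delone sets already established in Remark \ref{remark:ideal crystals}: if $S$ is strongly periodic, then $S = F + \Lambda_S$ for some non-empty finite $F \Subset \R^d$ and a full-rank lattice $\Lambda_S \subseteq \R^d$ of translational symmetries of $S$. Because $\Lambda_S$ is an additive subgroup of $\R^d$, one has $\Lambda_S - \Lambda_S = \Lambda_S$, and hence
$$
S - S = (F + \Lambda_S) - (F + \Lambda_S) = (F - F) + \Lambda_S = \bigcup_{\bx \in F-F}(\bx + \Lambda_S).
$$
So the problem reduces to showing that this finite union of translates of the lattice $\Lambda_S$ is itself a Delone set.

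Relative density is immediate: $F \neq \emptyset$ gives $\vec{0} \in F - F$, so $\Lambda_S \subseteq S - S$, and any full-rank lattice in $\R^d$ is relatively dense (a fundamental parallelepiped has bounded diameter).

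For uniform discreteness, the key point is that all pieces of the union share the same underlying lattice $\Lambda_S$. I would choose representatives $\bx_1,\ldots,\bx_n \in F - F$, one from each coset of $\Lambda_S$ that meets $F - F$, so that
$$
S - S = \bigsqcup_{i=1}^n (\bx_i + \Lambda_S)
$$
becomes a \emph{disjoint} union of whole cosets. Two distinct points lying in the same coset $\bx_i + \Lambda_S$ differ by a non-zero element of $\Lambda_S$, hence are at distance at least $\ell = \min\{\|\lambda\| : \vec{0} \neq \lambda \in \Lambda_S\} > 0$. Two points from different cosets $\bx_i + \Lambda_S$ and $\bx_j + \Lambda_S$ are at distance at least $\delta_{i,j} = \mathrm{dist}(\bx_i + \Lambda_S,\, \bx_j + \Lambda_S)$, which is positive because these are disjoint closed subsets of $\R^d$, and the minimum over the finitely many pairs $i \neq j$ is therefore a positive number $\delta$. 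Combining, any two distinct points of $S - S$ are separated by at least $\min(\ell,\delta) > 0$.

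The main point to watch is exactly this last step: in general, a finite union of uniformly discrete sets need not be uniformly discrete, so the argument fundamentally uses that here all pieces are translates of a single lattice, allowing a compactness/finiteness argument in the compact quotient $\R^d / \Lambda_S$ to produce the required uniform positive lower bound on coset-to-coset distances.
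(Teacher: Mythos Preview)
Your proof is correct, but it takes a different route from the paper's. Both start from the ideal-crystal decomposition $S = F + \Lambda_S$ of Remark~\ref{remark:ideal crystals} and compute $S - S = (F - F) + \Lambda_S$, but then diverge. The paper observes the algebraic identity
\[
S - S = (F - F) + \Lambda_S = (F + \Lambda_S) - F = S + (-F),
\]
so that $S - S \subseteq S + F'$ with $F' = -F$ finite; this is exactly Meyer's original characterization of Meyer sets, already quoted after Definition~\ref{def: meyer sets}, and the proof ends there in one line. You instead verify directly that the finite union of lattice cosets $(F-F)+\Lambda_S$ is Delone, handling uniform discreteness via coset representatives and a compactness argument in $\R^d/\Lambda_S$. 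Your approach is more self-contained (it does not rely on the nontrivial equivalence between the two definitions of Meyer set), while the paper's argument is shorter and exploits that equivalence. One small remark: your phrase ``positive because these are disjoint closed subsets of $\R^d$'' is not by itself a valid justification (disjoint closed sets can be at distance zero), but you correctly identify the real reason in your final paragraph --- the distance between cosets $\bx_i+\Lambda_S$ and $\bx_j+\Lambda_S$ equals $\mathrm{dist}(\bx_i-\bx_j,\Lambda_S)$, which is positive since $\Lambda_S$ is discrete and $\bx_i - \bx_j \notin \Lambda_S$.
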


\begin{proof}
  As noted in Remark \ref{remark:ideal crystals} strongly periodic Delone sets are ideal crystals, that is, there exists a non-empty finite set $F \Subset \R^d$ such that $S = F + \Lambda_S$ where
  $\Lambda_S = \{\bt \in \R^d \mid S + \bt = S \}$.
  Then
  \begin{align*}
    S-S &= F + \Lambda_S - (F + \Lambda_S)\\
    & =F-F + \Lambda_S - \Lambda_S \\
    & =F-F + \Lambda_S \\
    &=F + \Lambda_S - F \\
    &=  S + (-F).
  \end{align*}
  Thus, $S$ is a Meyer set by the original definition of Meyer sets.
\end{proof}

\noindent
The statement in the above lemma is not an equivalence since there are also non-periodic Meyer sets (for example, the set $\Z \setminus \{0\}$).

% There is one more commonly known class of Delone sets.
% We do not consider
Let us denote by $[S] = \Z[S] = \{ a_1 \vec{s}_1 + \ldots + a_k \vec{s}_k \mid \vec{s}_i \in S, a_i \in \Z, k \in \N \}$ the abelian group generated by $S \subseteq \R^d$.

\begin{definition}
A Delone set $S$ is a \emph{finitely generated Delone set} if the set $[S-S]$ or equivalently the set $[S]$ is a finitely generated abelian group of $\R^d$.
\end{definition}

\noindent
The class of finitely generated Delone sets contains the class of Delone sets of finite local complexity, that is, any Delone set of finite local complexity is a finitely generated Delone set \cite{part1}.

In the following example we see that the mentioned inclusions of different classes of Delone sets are strict.

\begin{example}\label{ex: strict hierarchy}
  % \textcolor{red}{VOIS TEHDÄ KUVAN JOSSA JOKAINEN NÄISTÄ DELONE JOUKOISTA..}
  We give 1-dimensional examples of 
  a Delone set which is not a finitely generated Delone set, a finitely generated Delone set which is not a Delone set of finite local complexity, and a Delone set of finite local complexity which is not a Meyer set.
  % All the examples are one-dimensional Delone sets.
  See Figure \ref{fig: hierarchy examples} for pictorial illustrations.
  This follows from the fundamental theorem of finitely generated abelian groups \cite{abstract-algebra}.
  \begin{itemize}
    \item Consider the Delone set
    $$
    S_1 = \{ n + \frac{1}{n} \mid n \in \Z \setminus \{ 0 \} \}.
    $$
    It can be quite easily verified that $\Q \subseteq [S_1]$. Since $\Q$ is not a finitely generated abelian group, neither is its superset $[S_1]$.
    So, $S_1$ is not a finitely generated Delone set.
    % So, $S_1$ is a Delone set but not a finitely generated Delone set.
    
    \item Consider the Delone set
    $$
    S_2 = \{ n \pi \mid n \in \Z \} \cup \Z \setminus \{ \lfloor n \pi \rfloor , \lceil n \pi \rceil \mid n \in \Z \}.
    $$
    We have $[S_2-S_2] \subseteq \Z[1, \pi]$ which means that $S_2$ is a finitely generated Delone set since any subgroup of a finitely generated abelian group is finitely generated.
    % a subgroup of a finitely generated ABELIAN group is also finitely generated
    However, the set of gaps between consecutive points of $S_2$ is infinite which implies that $S-S$ is not locally finite.
    Hence, $S_2$ is not a Delone set of finite local complexity.
    So, $S_2$ is a finitely generated Delone set but not a Delone set of finite local complexity.
    
    \item Consider the Delone set
    $$
    S_3 = - \N \cup \{ n \pi \mid n \in \N \}.
    $$
    Since there are only two kinds of gaps between two consecutive points of $S_3$, it is a Delone set of finite local complexity.
    However, we have $\N \subseteq S_3-S_3$ and $\{ n \pi \mid n \in \N \} \subseteq S_3 - S_3$.
    The latter set does not contain any integers, but it contains real numbers that are arbitrarily close to positive integers by the irrationality of $\pi$.
    This means that $S_3-S_3$ is not uniformly discrete and hence not a Delone set implying that $S_3$ is not a Meyer set.
    % So, $S_3$ is a Delone set of finite local complexity but not a Meyer set.
  \end{itemize}
\end{example}

\begin{figure}[ht]
\centering

\begin{tikzpicture}[scale=0.5]
    %S_1
  \foreach \i in {1,...,10}{
    \draw[red,fill=red] (\i+1/\i,0) circle(2pt);
  }
    
  \foreach \i in {-10,...,-1}{
    \draw[red,fill=red] (\i+1/\i,0) circle(2pt);
  }
    
  \foreach \i in {-10,...,10}{
    \draw (\i,0.2) -- (\i,-0.2);
  }
    
  \draw[->] (-11,0) -- (11,0);
  
  \node at (-12.5,0) {$S_1:$};
  
  %S_2
  
  \draw[red,fill=red] (1,-2) circle(2pt);
  \draw[red,fill=red] (2,-2) circle(2pt);
  \draw[red,fill=red] (5,-2) circle(2pt);
  \draw[red,fill=red] (8,-2) circle(2pt);
  
  \draw[red,fill=red] (-1,-2) circle(2pt);
  \draw[red,fill=red] (-2,-2) circle(2pt);
  \draw[red,fill=red] (-5,-2) circle(2pt);
  \draw[red,fill=red] (-8,-2) circle(2pt);
  
  \draw[red,fill=red] (3.14159,-2) circle(2pt);
  \draw[red,fill=red] (2*3.14159,-2) circle(2pt);
  \draw[red,fill=red] (3*3.14159,-2) circle(2pt);
  
  \draw[red,fill=red] (-3.14159,-2) circle(2pt);
  \draw[red,fill=red] (-2*3.14159,-2) circle(2pt);
  \draw[red,fill=red] (-3*3.14159,-2) circle(2pt);
  
  \foreach \i in {-10,...,10}{
    \draw (\i,-1.8) -- (\i,-2.2);
  }
  \draw[->] (-11,-2) -- (11,-2);
  \node at (-12.5,-2) {$S_2:$};
  
  %S_3
  
  \draw[red,fill=red] (0,-4) circle(2pt);
  \draw[red,fill=red] (-1,-4) circle(2pt);
  \draw[red,fill=red] (-2,-4) circle(2pt);
  \draw[red,fill=red] (-3,-4) circle(2pt);
  \draw[red,fill=red] (-4,-4) circle(2pt);
  \draw[red,fill=red] (-5,-4) circle(2pt);
  \draw[red,fill=red] (-6,-4) circle(2pt);
  \draw[red,fill=red] (-7,-4) circle(2pt);
  \draw[red,fill=red] (-8,-4) circle(2pt);
  \draw[red,fill=red] (-9,-4) circle(2pt);
  \draw[red,fill=red] (-10,-4) circle(2pt);

  \draw[red,fill=red] (3.14159,-4) circle(2pt);
  \draw[red,fill=red] (2*3.14159,-4) circle(2pt);
  \draw[red,fill=red] (3*3.14159,-4) circle(2pt);
  
  \foreach \i in {-10,...,10}{
    \draw (\i,-3.8) -- (\i,-4.2);
  }
  \draw[->] (-11,-4) -- (11,-4);
  \node at (-12.5,-4) {$S_3:$};

  \end{tikzpicture}

%\includestandalone[width=.8\textwidth]{figures/fig_hierarchy_examples}

\caption{The Delone sets defined in Example \ref{ex: strict hierarchy}.}
\label{fig: hierarchy examples}
\end{figure}
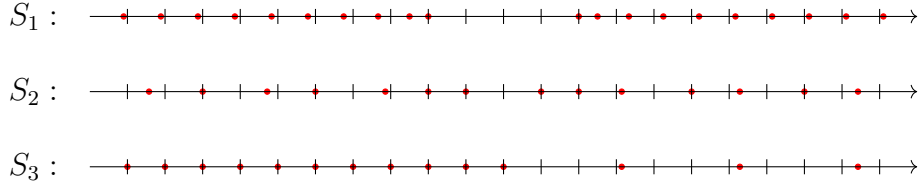

The following lemma states that the family of Delone sets of finite local complexity is closed under addition with non-empty finite sets.
We will use this lemma later in our considerations.

\begin{lemma} \label{closed under addition with a finite set}
  Let $S \subseteq \R^d$ be a Delone set of finite local complexity and let $F \Subset \R^d$ be a non-empty finite %non-empty 
  set. 
  Then also $S+F$ is a Delone set of finite local complexity. 
\end{lemma}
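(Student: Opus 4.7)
The plan is to verify that $S+F$ satisfies the three defining properties of a Delone set of finite local complexity: relative denseness, uniform discreteness, and local finiteness of its difference set (the last property being the characterization of FLC recalled just before Definition \ref{def: meyer sets}).

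First, relative denseness is immediate. Fixing any $\vec{f}_0 \in F$, the inclusion $S + \vec{f}_0 \subseteq S + F$ together with the fact that a translate of a relatively dense set is relatively dense with the same covering radius gives a covering radius for $S+F$ inherited from $S$.

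The core of the argument rests on the identity $(S+F) - (S+F) = (S-S) + (F-F)$, which is immediate from distributivity of Minkowski operations. Since $S$ is of finite local complexity, $S-S$ is locally finite. I will then show that the sum of a locally finite set with a finite set remains locally finite: for any $M > 0$, the set $((S-S) + (F-F)) \cap B_M$ is contained in $\bigcup_{\vec{e} \in F-F}(((S-S) \cap B_{M + M'}) + \vec{e})$, where $M' = \max_{\vec{e} \in F-F} ||\vec{e}||$, and this is a finite union of finite sets. Hence $(S+F) - (S+F)$ is locally finite, which is the FLC condition for $S+F$. Uniform discreteness of $S+F$ follows from the same observation: the intersection of $(S+F) - (S+F)$ with the unit ball $B_1$ is finite, so $\vec{0}$ is an isolated point of this difference set, and the minimum distance from $\vec{0}$ to any non-zero element of the intersection is a positive number that serves as a packing radius for $S+F$.

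There is no substantial obstacle here; the argument is an elementary bookkeeping exercise. The only minor subtlety is that a given element of $S+F$ may admit several representations $\bs+\vec{f}$ with $\bs \in S$ and $\vec{f} \in F$, but this ambiguity is absorbed automatically by working at the level of the difference set rather than with individual representatives.
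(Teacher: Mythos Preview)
Your proof is correct and follows essentially the same approach as the paper: both rely on the identity $(S+F)-(S+F)=(S-S)+(F-F)$ and the containment $((S-S)+(F-F))\cap B_M \subseteq ((S-S)\cap B_{M+M'})+(F-F)$ to deduce local finiteness of the difference set. The only cosmetic difference is the order of the steps---the paper establishes uniform discreteness directly before turning to FLC, whereas you obtain uniform discreteness as an immediate consequence of the local finiteness of $(S+F)-(S+F)$, which is arguably cleaner.
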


\begin{proof}
  % Let $S$ be a Delone set of finite local complexity and let $F\subseteq\R^d$ be a finite set. 
  Clearly, $S + F$ is relatively dense since it contains a translation of $S$ as a subset. 
  Let us show that it is uniformly discrete too.
  For this, let $\vec{e}_1, \vec{e}_2 \in S+F$ be arbitrary but different. So, we have
  $\vec{e}_1 = \vec{s}_1 + \vec{f}_1$ and $\vec{e}_2 = \vec{s}_2 + \vec{f}_2$ for some $\vec{s}_1,\vec{s}_2 \in S$ and $\vec{f}_1,\vec{f}_2 \in F$
  and hence
  \begin{align*}
      % d(\vec{e}_1,\vec{e}_2) &=
      ||\vec{e}_1 - \vec{e}_2|| 
      % \\
      % &
      = ||\vec{s}_1 + \vec{f}_1 - (\vec{s}_2 + \vec{f}_2)|| 
      % \\
      % &
      = 
      ||(\vec{s}_1 - \vec{s}_2) - (\vec{f}_2 - \vec{f}_1)||.
     %  & \geq \big | ||s_1 - s_2|| - ||f_2 - f_1|| \big | \\
      % &= d(\vec{s}_1-\vec{s}_2,\vec{f}_2-\vec{f}_1).
  \end{align*}
  Since $\vec{e}_1 \neq \vec{e}_2$, we also have $\vec{s}_1-\vec{s}_2 \neq \vec{f}_2-\vec{f}_1$ and since $F$ is finite, $\vec{f}_2-\vec{f}_1$ belongs to a finite set $F-F$. 
  By the fact that $S-S$ is locally finite,
  % $d(\vec{s}_1-\vec{s}_2,\vec{f}_2-\vec{f}_1)$ 
  $||\vec{s}_1 + \vec{f}_1 - (\vec{s}_2 + \vec{f}_2)||$
  has a positive lower bound which is also a lower bound of 
  % $d(\vec{e}_1,\vec{e}_2)$
  $||\vec{e}_1 - \vec{e}_2||$
  by the above computation.
  % Indeed, for any compact set $K$ the intersection $(S-S) \cap K$ is finite and in particular this holds for a compact set that contains $F-F$. 
  % More precisely, $(S-S)\cap(F-F)$ is a finite set.
  Thus, $S+F$ is uniformly discrete and hence a Delone set. 
  
  Finally, let us show that
  $S+F - (S+F)$ is locally finite, that is, $S+F$ is a Delone set of finite local complexity. For this it is enough to show that $\big ( S+F - (S+F) \big ) \cap B_{\varepsilon}(\vec{0})$ is finite for arbitrary $\varepsilon > 0$. This fact follows from computations
  $$
  \left ( S+F-(S+F) \right ) \cap B_{\varepsilon}(\vec{0}) \subseteq \left ( ( S-S  ) \cap B_{\varepsilon + T}(\vec{0}) \right ) + F-F
  $$
  and from the assumption that $S$ is locally finite where $T = \max \{ ||a||  \mid   a \in F - F \}$.
  Thus, the set $S+F$ is a Delone set of finite local complexity too.
\end{proof}

\begin{remark}
Note that the above lemma does not hold for arbitrary Delone sets, that is, the sum of a Delone set and a finite set is not necessarily a Delone set. Indeed, consider the Delone set
$$
S=S_1 =  \{ n + \frac{1}{n}  \mid n \in \Z \setminus \{0\}  \} 
$$
defined in Example \ref{ex: strict hierarchy}
and let
$F=\{0,1\}$. 
We have $n + \frac{1}{n} \in S+ F$ and $n + \frac{1}{n-1} \in S+F$ for arbitrarily large $n$. Thus, $S+F$ is not uniformly discrete and hence not a Delone set.
(In fact, we could show that the lemma does not hold even for finitely generated Delone sets considering the finitely generated Delone set 
$S = S_2=\{ n \pi \mid n \in \Z \} \cup \Z \setminus \{ \lfloor n \pi \rfloor , \lceil n \pi \rceil \mid n \in \Z \}$
and the finite set $F=\{0,1\}$ as above.)
\end{remark}

\subsection{Delone configurations and some algebraic concepts}

We call a finitary function $c \in \C^{\R^d}$ (that is, an $\R^d$-configuration) a \emph{Delone configuration}, a \emph{finitely generated Delone configuration}, a \emph{Delone configuration of finite local complexity} or a \emph{Meyer configuration} if its support $\supp(c)$ is 
% a subset of 
a Delone set, a finitely generated Delone set, a Delone set of finite local complexity or a Meyer set, respectively.

Let us extend our terminology of annihilators and periodizers to this setting. 
%generalized configurations and Delone sets.
An $\R^d$-polynomial $f \in \C^{\R^d}$ annihilates (or is an annihilator of) a function $c \in \C^{\R^d}$ if $f * c = 0$, and it periodizes (or is a periodizer of) $c$ if $f * c$ is strongly periodic.
Consequently, $f$ annihilates or periodizes a set $S \subseteq \R^d$ if it annihilates or periodizes, respectively, its indicator function $\mathbbm{1}_S$.
From now on, we denote $fc=f*c$.

Also, we generalize the definition of line polynomials and say that an $\R^d$-polynomial $f$ is a \emph{line $\R^d$-polynomial} if its support $\supp(f)$ contains at least two points, and $\supp(f)$ is contained in a line, that is, $\supp(f) \subseteq \bu + \R \bv$ for some $\bu,\bv \in \R^d$.
Again, the vector $\bv$ is called a direction of $f$.
Analogously, we generalize the definition of difference polynomials and say that an $\R^d$-polynomial is a \emph{difference $\R^d$-polynomial} if it is of the form $X^{\bv}-1$.

As in the case of $\Z^d$-configurations, an $\R^d$-configuration is periodic if and only if it is annihilated by a non-trivial difference $\R^d$-polynomial.
Note that annihilation of an $\R^d$-configuration by a line $\R^d$-polynomial does not necessarily imply periodicity as we will see later in this paper.
This differs from $\Z^d$-configurations for which annihilation by a line polynomial implies periodicity.
% in Example \ref{example:1D_annihilator_non-periodic}.
However, if the $\R^d$-configuration in consideration is a Delone configuration of finite local complexity, then annihilation by a line $\R^d$-polynomial indeed implies periodicity as we will prove in Lemma \ref{line polynomial annihilator implies periodicity}.

\subsection{An example}

In this subsection, we consider an example of a family of Delone sets with non-trivial annihilators.
These Delone sets are obtained from certain rotations of the \emph{torus}.
For a real number $r\in\R$, we denote by $\lfloor r \rfloor = \max\{z\in\Z \mid z \leq r\}$ and $\{ r \} = r - \lfloor r \rfloor$ the integer and fractional parts of $r$, respectively.

The (2-dimensional) torus is the set $\mathbbm{T}^2 =[0,1) \times [0,1)$ equipped with the metric $d$ such that 
$$
d((u_1,u_2),(v_1,v_2)) = \max \{ d_1(u_1,v_1), d_1(u_2,v_2) \}
$$
for all $(u_1,u_2),(v_1,v_2) \in \mathbbm{T}^2$
where $d_1(u,v) = \min\{ |u-v|,1-|u-v| \}$ for all $u,v \in [0,1)$. In fact, $d_1$ is the metric in the 1-dimensional torus, that is, the circle $[0,1)$.
A \emph{rotation} $\rho_{a,b} \colon \mathbbm{T}^2 \to \mathbbm{T}^2$ of the torus by the parameters $a,b\in\R$ is defined such that
$$
\rho_{a,b}(u_1,u_2) = (\{ u_1+a \},\{ u_2+b \})
$$
for all $(u_1,u_2)\in \mathbbm{T}^2$.
% The torus $\mathbbm{T}^2$ is partitioned into sets $$A_0 = \{ (u_1,u_2) \in \mathbbm{T}^2 \mid 0 \leq u_1+u_2 < 1 \}$$ and $$A_1 = \{ (u_1,u_2) \in  \mathbbm{T}^2 \mid 1 \leq  u_1+u_2 < 2 \}.$$
For a reference on torus rotations, see \emph{e.g.} Section 1.11 in \cite{kurka}.

% The torus $\mathbbm{T}^2$ is partitioned into sets $A_0 = \{ (u_1,u_2) \in \mathbbm{T}^2 \mid 0 \leq u_1+u_2 < 1 \}$ and $A_1 = \{ (u_1,u_2) \in  \mathbbm{T}^2 1 \leq \mid u_1+u_2 < 2 \}$.
The torus $\mathbbm{T}^2$ is partitioned into sets $$A_0 = \{ (u_1,u_2) \in \mathbbm{T}^2 \mid 0 \leq u_1+u_2 < 1 \}$$ and $$A_1 = \{ (u_1,u_2) \in  \mathbbm{T}^2 \mid 1 \leq  u_1+u_2 < 2 \}.$$
For a vector $\bz=(z_1,z_2) \in \mathbbm{T}^2$ and a non-zero real number $\alpha$, define a binary $\Z^2$-configuration $c_{\bz,\alpha} \in \{0,1\}^{\Z^2}$ such that
$c_{\bz,\alpha}(i,j) = 1$ if $\rho_{i\alpha,j\alpha}(\bz) \in A_1$ and
$c_{\bz,\alpha}(i,j) = 0$ if $\rho_{i\alpha,j\alpha}(\bz) \in A_0$. 
Define a Delone set
$$
S_{\bz,\alpha} = \{ \bz + (i,j) \mid c_{\bz,\alpha}(i,j) = 1 \} \subseteq \R^2.
$$
The configurations $c_{\bz,\alpha}$ and hence the Delone sets $S_{\bz,\alpha}$ have non-trivial annihilators.
The proof of this fact goes by observing that the configurations $c_{\bz,\alpha}$ are sums of periodic functions:

\begin{lemma}
For any $\bz=(z_1,z_2) \in \mathbbm{T}^2$ and non-zero $\alpha \in \R$, we have
$$
c_{\bz,\alpha}(i,j) = \lfloor z_1+z_2+ (i+j) \alpha \rfloor - \lfloor z_1 + i \alpha \rfloor - \lfloor z_2 + j \alpha \rfloor
$$
for all $(i,j) \in \Z^2$.
Consequently, $c_{\bz,\alpha}$ is annihilated by the polynomial $$(X^{(1,-1)}-1)(X^{(0,1)}-1)(X^{(1,0)}-1).$$
\end{lemma}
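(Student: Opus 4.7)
The plan is to first translate the geometric definition of $c_{\bz,\alpha}$ into a simple fractional-part inequality, then verify the claimed identity by a one-line computation with the floor function, and finally read off the annihilator from the three-term decomposition that the identity supplies.

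First, I would unwind the definition. Since $\rho_{i\alpha,j\alpha}(\bz)=(\{z_1+i\alpha\},\{z_2+j\alpha\})$, and since $A_0$ and $A_1$ are the subsets of $\mathbbm{T}^2$ on which the coordinate sum lies in $[0,1)$ and $[1,2)$ respectively, one obtains that $c_{\bz,\alpha}(i,j)=1$ iff $\{z_1+i\alpha\}+\{z_2+j\alpha\}\geq 1$, and $c_{\bz,\alpha}(i,j)=0$ otherwise. Since both fractional parts lie in $[0,1)$, their sum lies in $[0,2)$, so these two cases are exhaustive.

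Second, I would establish the elementary identity
\[
\lfloor x+y\rfloor-\lfloor x\rfloor-\lfloor y\rfloor=\lfloor\{x\}+\{y\}\rfloor
\]
by writing $x=\lfloor x\rfloor+\{x\}$ and $y=\lfloor y\rfloor+\{y\}$. Since $\{x\}+\{y\}\in[0,2)$, the right-hand side equals $0$ when $\{x\}+\{y\}<1$ and $1$ when $\{x\}+\{y\}\geq 1$. Substituting $x=z_1+i\alpha$ and $y=z_2+j\alpha$ and comparing with the previous paragraph yields the stated closed-form expression for $c_{\bz,\alpha}(i,j)$.

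Third, the identity expresses $c_{\bz,\alpha}$ as $c_{\bz,\alpha}(i,j)=a(i+j)-b(i)-e(j)$, where $a(k)=\lfloor z_1+z_2+k\alpha\rfloor$, $b(i)=\lfloor z_1+i\alpha\rfloor$ and $e(j)=\lfloor z_2+j\alpha\rfloor$, regarded as functions on $\Z^2$. The function $(i,j)\mapsto a(i+j)$ depends only on $i+j$ and is therefore annihilated by $X^{(1,-1)}-1$; the function $(i,j)\mapsto b(i)$ depends only on $i$ and is annihilated by $X^{(0,1)}-1$; and $(i,j)\mapsto e(j)$ depends only on $j$ and is annihilated by $X^{(1,0)}-1$. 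Because the three difference polynomials commute, their product annihilates each of the three summands and hence $c_{\bz,\alpha}$ itself.

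There is no serious obstacle in this argument; the only step demanding a moment of attention is the floor identity together with the observation that $\{x\},\{y\}\in[0,1)$ makes the two cases jointly exhaustive and mutually exclusive.
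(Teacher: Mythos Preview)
Your proof is correct and follows essentially the same approach as the paper: both establish the floor identity $\lfloor x+y\rfloor-\lfloor x\rfloor-\lfloor y\rfloor=\lfloor\{x\}+\{y\}\rfloor$ (the paper via the intermediate step $\{x\}+\{y\}-\{x+y\}$), match it to the definition of $c_{\bz,\alpha}$, and then read off the annihilator from the resulting three-term periodic decomposition.
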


\begin{proof}
The first claim follows from the following computation:
\begin{align*}
& \ \ \ \ \lfloor z_1+z_2+ (i+j) \alpha \rfloor - \lfloor z_1 + i \alpha \rfloor - \lfloor z_2 + j \alpha \rfloor  \\
&= \{z_1 + i \alpha\} + \{z_2 + j \alpha \} - \{z_1 + z_2 +(i+j) \alpha\} \\
&= 
\lfloor \{z_1 + i \alpha\} + \{z_2 + j \alpha \} \rfloor + \{ \{z_1 + i \alpha\} + \{z_2 + j \alpha \} \} - \{z_1 + z_2 +(i+j) \alpha\}
\\
&=
\lfloor \{z_1 + i \alpha\} + \{z_2 + j \alpha \} \rfloor \\
&=
\begin{cases}
    1, \text{ if } \{z_1 + i \alpha\} + \{z_2 + j \alpha \} \in A_1 \\
    0, \text{ if } \{z_1 + i \alpha\} + \{z_2 + j \alpha \} \in A_0
\end{cases} \\ 
&=
c_{\bz,\alpha}(i,j).
\end{align*}
So, we have $c_{\bz,\alpha} = c_1+c_2+c_3$ where $c_1(i,j) =\lfloor z_1+z_2+ (i+j) \alpha \rfloor$,
$c_2(i,j) =-\lfloor z_1+ i \alpha \rfloor$, and
$c_3(i,j) =-\lfloor z_2+ j \alpha \rfloor$
for all $(i,j)\in\Z^2$.
These functions are annihilated by the polynomials $X^{(1,-1)}-1$, $X^{(0,1)}-1$, and $X^{(1,0)}-1$, respectively.
Thus, $c_{\bz,\alpha}$ is annihilated by the product of these polynomials.
\end{proof}

\begin{remark}
In \cite{icalp, kari-szabados-multidimensional_words} the authors gave the configuration $c_{\vec{0},\alpha}$ for an irrational $\alpha$ as an example of a configuration that has a non-trivial annihilator but cannot be expressed as a sum of periodic finitary functions. 
\end{remark}

\section{Meyer sets with slow patch-complexity growth} \label{sec: meyer sets}

In this section we consider Meyer sets with slow patch-complexity growth.
In particular, we show that if a Meyer set $S$ satisfies $\liminf_{T \to \infty} \frac{N_S(T)}{T^d} = 0$, then it has low complexity with respect to some $\R^d$-shape.
Consequently, it has a non-trivial annihilator as we will see in Section \ref{sec: delone configurations with annihilators}.

Consider the set $U = (S+S-S) \cap B_R$ for a Meyer set S with covering radius $R$.
The following lemma shows that this set is finite.

% So, assume that $S \subseteq \R^d$ is a Meyer set with covering radius $R$.
% Define a set $U = (S+S-S) \cap B_R$.
% This set is finite: % since $S$ is a Meyer set:

\begin{lemma}\label{lemma:the set U  is finite for Meyer}
  Let $S \subseteq \R^d$ be a Meyer set. 
  % with covering radius $R$.
  % and $U = (S+S-S) \cap B_R$.
  The set $S+S-S$ is locally finite.
  % and hence
  % the set $U = (S+S-S) \cap B_R$ is finite.
\end{lemma}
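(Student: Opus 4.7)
The plan is to use Meyer's original characterization of Meyer sets, which was recalled earlier in the section: $S$ is a Meyer set if and only if there exists a nonempty finite set $F \Subset \R^d$ with $S - S \subseteq S + F$. The goal will be to sandwich $S + S - S$ inside a finite union of translates of $S$ itself, since such a union is clearly locally finite.

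First, I would observe that $S-S$ is symmetric, that is, $-(S-S) = S-S$. Taking negatives on both sides of the inclusion $S - S \subseteq S + F$ therefore yields the ``dual'' inclusion $S - S \subseteq -S - F$. Having \emph{both} inclusions available is the crucial point: applied separately to the two subexpressions of $S + (S - S)$, one inclusion cancels one copy of $S$ and the other inclusion cancels a second copy, leaving only a translation by a finite set.

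Concretely, the computation I would run is
\begin{align*}
S + S - S \;=\; (S - S) + S \;\subseteq\; (-S - F) + S \;=\; (S - S) - F \;\subseteq\; (S + F) - F \;=\; S + (F - F),
\end{align*}
so that $S + S - S \subseteq \bigcup_{g \in F - F}(S + g)$. Since $F$ is finite, $F - F$ is finite. Each $S + g$ is locally finite because $S$ is uniformly discrete (being Delone), and a finite union of locally finite sets is locally finite. It follows that $S + S - S$ is locally finite, as claimed.

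The main obstacle, I expect, is simply noticing the right way to use the Meyer inclusion. Applying it once naively collapses $S + S - S$ into $S + S + F$, and $S + S$ need not be locally finite for Meyer sets (for a cut-and-project Meyer set, the Minkowski sum $S + S$ typically projects densely from the ambient lattice). Exploiting the symmetry of $S-S$ to apply the inclusion in \emph{two different directions} is what allows both sums in $S + S - S$ to be absorbed into a single finite translation.
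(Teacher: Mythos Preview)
Your proof is correct and follows essentially the same strategy as the paper: use Meyer's characterization $S-S\subseteq S+F$ to embed $S+S-S$ into $S$ plus a finite set, and then conclude local finiteness. The paper first translates so that $\vec{0}\in S$, uses $S\subseteq S-S$ to pass through $(S-S)-(S-S)$, and arrives at $S+F+F-F$ (then invokes Lemma~\ref{closed under addition with a finite set}); your use of the symmetry $S-S=-(S-S)$ to obtain the dual inclusion $S-S\subseteq -S-F$ is a slightly cleaner variant that lands directly in $S+(F-F)$ without the auxiliary translation or the extra lemma, but the underlying idea is the same.
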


% \bigskip
% \noindent
% \textbf{Claim 1.} 
% The set $S+S-S$ is locally finite and hence
% the set $U$ is finite.

\begin{proof}
  Without loss of generality we may assume that $S$ is translated such that $\vec{0} \in S$.
  Since $S$ is a Meyer set, there exists a finite set $F$ such that $S-S \subseteq S+F$.
  Using this twice and the fact that $\vec{0} \in S$ we get

  \begin{align*}
  S+S-S &\subseteq S+S-S-S = S-S - (S-S) \\
  &\subseteq S + F - (S + F) = S-S + F - F \\
  &\subseteq S + F + F - F.
  \end{align*}

  \noindent
  Lemma \ref{closed under addition with a finite set} says that the sum of a Delone set of finite local complexity and a non-empty finite set is also a Delone set of finite local complexity.
  In particular, $S + F + F - F$ is uniformly discrete since $F+F-F$ is a non-empty finite set.
  This implies that $S+S-S$ is also uniformly discrete as a subset of a uniformly discrete set.
  The claim follows since uniformly discrete sets are locally finite.
\end{proof}

% \noindent
% If in the above theorem the Delone set $S$ was not a Meyer set, then the set $S+S-S$ would not be a locally finite set.
% Indeed, assume that $S$ is a Delone set but not a Meyer set. Then the set $S-S$ is not uniformly discrete.
% It follows that $(S+S-S) \cap B_R$ is not finite.

\noindent
Define for every $T \geq 0$ a set
$H_T = U \cup (S \cap B_T)$.
This is a finite set since $U$ is a finite set as seen above and since $S$ is a Delone set (and hence $S \cap B_T$ is a finite set).
% We show that $S$ has low complexity with respect to $H_T$ for all sufficiently large $T$ if N_S(T) = O(T^{d-1}).
We start by the observation that any non-empty $H_T$-pattern of $S$ contains an element of $S$ already in its $U$-part.

\begin{lemma} \label{lemma:Meyer H_T contains a point in its U-part}
  Let $S \subseteq \R^d$ be a Meyer set with covering radius $R$, and let $U = (S+S-S) \cap B_R$.
  Define for every $T \geq 0$ a set
  $H_T = U \cup (S \cap B_T)$.
  If $S \cap (H_T + \vec{t}) \neq \emptyset$, then also $S \cap (U + \vec{t}) \neq \emptyset$.
\end{lemma}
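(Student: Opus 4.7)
The plan is to split into two cases based on which part of $H_T = U \cup (S \cap B_T)$ witnesses the hypothesis $S \cap (H_T + \bt) \neq \emptyset$. Pick some $\bs \in S$ and $\bh \in H_T$ with $\bs = \bh + \bt$. If $\bh \in U$, then $\bs \in U + \bt$ already, so $S \cap (U+\bt) \ni \bs$ and we are done immediately.

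The substantive case is $\bh \in S \cap B_T$, so in particular $\bh = \bs - \bt \in S$. I would then use relative denseness: since $S$ has covering radius $R$, the closed ball $B_R(\bt)$ contains some point $\bs' \in S$, i.e.\ $\bs' \in S$ with $\bs' - \bt \in B_R$. The goal is to show $\bs' - \bt \in U$. For this, rewrite
$$\bs' - \bt = \bs' + (\bs - \bt) - \bs = \bs' + \bh - \bs,$$
which exhibits $\bs' - \bt$ as an element of $S + S - S$ (all three of $\bs'$, $\bh$, $\bs$ lie in $S$). Combined with $\bs' - \bt \in B_R$, this gives $\bs' - \bt \in (S+S-S) \cap B_R = U$, so $\bs' \in S \cap (U + \bt)$, as required.

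No real obstacle is anticipated; the lemma is essentially a bookkeeping observation. The only step that requires any care is the identification $\bs' - \bt = \bs' + \bh - \bs$, which is the reason the triple-sum set $S+S-S$ (rather than just $S-S$) appears in the definition of $U$: one copy of $S$ is used to translate by the covering radius, and the other two encode the pair $(\bs, \bh)$ witnessing the hypothesis. Note that Lemma \ref{lemma:the set U is finite for Meyer} is not needed for this lemma but justifies calling $U$ a reasonable finite object.
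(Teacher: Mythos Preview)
Your proof is correct and essentially identical to the paper's: both split on whether the witness lies in $U+\bt$ already, and in the remaining case use the covering radius to find a point $\bs'\in S\cap B_R(\bt)$, then rewrite $\bs'-\bt$ as a member of $S+S-S$ via the same three-term decomposition. Only the variable names differ.
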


% \bigskip
% \noindent
% \textbf{Claim 2.}
% If $S \cap (H_T + \vec{t}) \neq \emptyset$, then also $S \cap (U + \vec{t}) \neq \emptyset$.

\begin{proof}
  Assume that $\vec{u} \in S \cap (H_T + \vec{t})$.
  If $\vec{u} \in U + \vec{t}$, then the claim is valid and we are done.
  So, assume that $\vec{u} \in (H_T \setminus U) + \vec{t}$.
  Since $H_T \setminus U \subseteq S$, we have $\vec{u} - \vec{t} \in S$.
  By the definition of $R$ we have $S \cap B_R(\vec{t}) \neq \emptyset$.
  So, let $\vec{w} \in S \cap B_R(\vec{t})$.
  We claim that $\vec{w} \in U + \vec{t}$, {\it i.e.}, $\vec{w} - \vec{t} \in U$.
  This follows from the observation that
  $$
  \vec{w}-\vec{t} = \vec{w} - \vec{u} + \vec{u} -\vec{t} = \vec{w} + (\vec{u} - \vec{t}) - \vec{u} \in S+S-S
  $$
  and from the fact that $\vec{w} - \vec{t} \in B_R$.
\end{proof}

\noindent
We have the following upper bound for $P_S(H_T)$.

\begin{lemma} \label{lemma:Meyer bound for the complexity of H_T}
  Let $S \subseteq \R^d$ be a Meyer set with covering radius $R$, and let $U = (S+S-S) \cap B_R$.
  Define for every $T \geq 0$ a set
  $H_T = U \cup (S \cap B_T)$.
  Then
  $$
  P_S(H_T) \leq 1 + |U| N_S(T + R).
  $$
\end{lemma}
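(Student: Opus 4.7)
The plan is to count patterns by separating the empty pattern from the non-empty ones and, for each non-empty pattern, using Lemma \ref{lemma:Meyer H_T contains a point in its U-part} to identify an ``anchor point'' in $S$ whose local neighborhood determines the pattern.

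First, the translation $\bt$ may give the empty pattern $(S-\bt)\cap H_T = \emptyset$ (when $\bt$ is far enough from $S$), which contributes the ``$+1$'' term. For any $\bt$ giving a non-empty pattern, Lemma \ref{lemma:Meyer H_T contains a point in its U-part} guarantees some $\bu \in U$ with $\bu + \bt \in S$. Setting $\bs := \bt + \bu \in S$, we rewrite $\bt = \bs - \bu$ with $\bs \in S$ and $\bu \in U$, so
\[
  (S - \bt) \cap H_T \;=\; (S - \bs + \bu) \cap H_T.
\]

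Next I would fix $\bu \in U$ and bound the number of distinct patterns that arise as $\bs$ ranges over $S$. A short calculation shows that two choices $\bs_1, \bs_2 \in S$ produce the same pattern if and only if
\[
  (S - \bs_1) \cap (H_T - \bu) \;=\; (S - \bs_2) \cap (H_T - \bu).
\]
Since $H_T = U \cup (S \cap B_T) \subseteq B_R \cup B_T \subseteq B_{\max(T,R)}$ and $\bu \in U \subseteq B_R$, we get $H_T - \bu \subseteq B_{\max(T,R)+R}$, which for $T \geq R$ is contained in $B_{T+R}$. Therefore the set $(S-\bs)\cap(H_T-\bu)$ is determined by the $(T+R)$-patch $(S-\bs)\cap B_{T+R}$ of $S$ at $\bs$, and the number of distinct such patches is by definition at most $N_S(T+R)$. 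So for each fixed $\bu \in U$ we get at most $N_S(T+R)$ distinct non-empty patterns.

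Summing the bound over the $|U|$ choices of $\bu \in U$ (a union bound, since the same pattern may be anchored at several $\bu$'s) and adding $1$ for the empty pattern gives
\[
  P_S(H_T) \;\leq\; 1 + |U|\,N_S(T+R),
\]
which is the claim. The main subtlety is the bookkeeping of the two shifts: we must convert the pattern $(S-\bs+\bu)\cap H_T$, which lives in $H_T$, into a statement about the patch of $S$ at $\bs \in S$, and verify that the relevant ``window'' $H_T - \bu$ fits inside $B_{T+R}$ so that $N_S(T+R)$ is indeed an upper bound.
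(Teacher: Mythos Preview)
Your proof is correct and follows essentially the same approach as the paper: separate off the empty pattern, use Lemma~\ref{lemma:Meyer H_T contains a point in its U-part} to anchor each non-empty $H_T$-pattern at a point $\bs = \bt + \bu \in S$ with $\bu \in U$, and then observe that the pair $(\bu,\ (T{+}R)\text{-patch of }S\text{ at }\bs)$ determines the pattern because $H_T - \bu \subseteq B_{T+R}$. You are in fact slightly more careful than the paper in flagging that the containment $H_T - \bu \subseteq B_{T+R}$ uses $T \geq R$; the paper simply asserts the corresponding inclusion and only applies the lemma for large $T$ anyway.
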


% \bigskip
% \noindent
% \textbf{Claim 3.}
% $$
% P_S(H_T) \leq 1 + |U| N_S(T + R).
% $$

\begin{proof}
  Let $P$ be a non-empty $H_T$-pattern of $S$, that is, $P = S \cap (H_T + \bt) \neq \emptyset$ for some $\bt \in \R^d$.
  Then by Lemma \ref{lemma:Meyer H_T contains a point in its U-part} it contains a point already in its $U$-part, that is, $\bu + \bt \in P$ for some $\bu \in U$.
  Clearly, $P$ is contained in the $(T+R)$-patch of $S$ centered at $\bu + \bt$.
    So, the $(T+R)$-patch centered at $\bu+\bt$ together with $\bu$ uniquely determines the $H_T$-pattern centered at $\bt$.
  Hence, the number of non-empty $H_T$-patterns up to translation is at most $|U| N_S(T + R)$.
  % Hence, the number of non-empty $H_T$-patterns up to translation is at most the number of $(T+R)$-patches of $S$ centered at points of $U$, that is, at most $|U| N_S(T + R)$.
  Thus,
  $$
  P_S(H_T) \leq 1 + |U| N_S(T + R)
  $$
  since the empty set is always in $\Patt{H_T}{S}$.
\end{proof}

Let us now analyze the size of $H_T$. 
It contains the set $S \cap B_T$.
Since $R$ is the covering radius of $S$, any ball of radius $R$ must contain at least one point of $S$.
Hence, the number of points of $S \cap B_T$ is at least the number of separate balls of radius $R$ contained in $B_T$.
We give a rough lower bound for $|S \cap B_T|$ which is also a lower bound for $|H_T|$.
Consider the hypercube
$$
C = \left \{ (u_1,\ldots,u_d) \mid |u_i| \leq \frac{T}{\sqrt{d}} \right \}
$$
contained in $B_T$ with sides of length $\frac{2T}{\sqrt{d}} $.
It is contained in $B_T$.
Write
$$
\frac{2T}{\sqrt{d}}  = k \cdot 2R + q
$$
where $k$ is a non-negative integer --- the integer part of $\frac{T}{\sqrt{d}R}$ --- and $q < 2R$ is a non-negative real number.
Clearly, $C$ contains at least $k^d$ separate hypercubes with sides of length $2R$ and hence at least $k^d$ separate balls of radius $R$.
Since $R$ is the covering radius of $S$, each ball of radius $R$ contains at least one point of $S$.
Thus,
$$
|H_T| \geq |S \cap B_T| \geq |S \cap C| \geq k^d = \left ( \frac{T}{\sqrt{d}R} - \frac{q}{2R}  \right )^d
% = \Omega(T^d).
$$
and hence $\liminf _{T \to \infty} \frac{|H_T|}{T^d} > 0$.

Now, we are ready to prove that for Meyer sets sufficiently slow growth of patch-complexity implies low $\R^d$-pattern complexity.

\begin{theorem} \label{thm: low complexity Meyer}
  Let $S \subseteq \R^d$ be a Meyer set with covering radius $R$, and let $U = (S+S-S) \cap B_R$.
  Define for every $T \geq 0$ a set
  $H_T = U \cup (S \cap B_T)$.
  If
  $$
  \liminf_{T \rightarrow \infty} \frac{N_S(T)}{T^d}=0,
  $$
  then $S$ has low complexity with respect to $H_T$ for some $T$.
\end{theorem}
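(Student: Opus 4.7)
The plan is to combine the upper bound on $P_S(H_T)$ from Lemma \ref{lemma:Meyer bound for the complexity of H_T} with the lower bound on $|H_T|$ that was established in the discussion preceding the theorem statement, and then exploit the hypothesis $\liminf_{T\to\infty} N_S(T)/T^d = 0$ to conclude that the ratio $P_S(H_T)/|H_T|$ is less than $1$ for infinitely many values of $T$. Note that $U = (S+S-S) \cap B_R$ is a fixed finite set by Lemma \ref{lemma:the set U is finite for Meyer}, so $|U|$ is a constant not depending on $T$.

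Concretely, the first step is to record
$$
\frac{P_S(H_T)}{|H_T|} \leq \frac{1 + |U| N_S(T+R)}{|H_T|}
$$
from Lemma \ref{lemma:Meyer bound for the complexity of H_T}. The second step is to invoke the lower bound on $|H_T|$: the argument given in the text shows that there is a constant $\kappa > 0$ (depending only on $d$ and $R$) such that $|H_T| \geq \kappa T^d$ for all sufficiently large $T$. The third step is to take a sequence $T_n \to \infty$ witnessing $N_S(T_n)/T_n^d \to 0$, which is guaranteed by the assumption, and then set $T_n' = T_n - R$. Since $N_S$ is monotone and $T_n'/T_n \to 1$, it follows that
$$
\frac{N_S(T_n' + R)}{(T_n')^d} = \frac{N_S(T_n)}{T_n^d}\cdot \left(\frac{T_n}{T_n'}\right)^d \longrightarrow 0.
$$
Combined with the bound $|H_{T_n'}| \geq \kappa (T_n')^d$, this yields
$$
\frac{P_S(H_{T_n'})}{|H_{T_n'}|} \leq \frac{1}{\kappa (T_n')^d} + \frac{|U|}{\kappa}\cdot \frac{N_S(T_n'+R)}{(T_n')^d} \longrightarrow 0,
$$
so in particular $P_S(H_{T_n'}) \leq |H_{T_n'}|$ for all sufficiently large $n$, which proves the theorem.

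The argument is essentially a bookkeeping exercise once the two bounds are in place, and there is no real obstacle; the only mildly delicate point is handling the additive shift by $R$ when transferring the $\liminf$ hypothesis from $N_S(T)$ to $N_S(T+R)$. This is resolved by the simple substitution $T_n' = T_n - R$ and the fact that the denominator $T^d$ is asymptotically unaffected by the shift, together with the monotonicity of the patch-counting function.
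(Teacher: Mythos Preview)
Your proof is correct and follows essentially the same approach as the paper: both combine the upper bound $P_S(H_T)\leq 1+|U|N_S(T+R)$ from Lemma~\ref{lemma:Meyer bound for the complexity of H_T} with the lower bound $\liminf_{T\to\infty}|H_T|/T^d>0$ established just before the theorem, and conclude by comparing growth rates. Your treatment is in fact slightly more explicit than the paper's, which simply asserts that $\liminf_{T\to\infty}N_S(T)/T^d=0$ implies $\liminf_{T\to\infty}P_S(H_T)/T^d=0$ without spelling out the substitution $T'=T-R$ (incidentally, your mention of monotonicity of $N_S$ is unnecessary since $N_S(T_n'+R)=N_S(T_n)$ exactly).
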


\begin{proof}
  By Lemma \ref{lemma:Meyer bound for the complexity of H_T} we have 
  $$
  P_S(H_T) \leq 1 + |U| N_S(T + R).
  $$
  Thus, if $\liminf_{T \to \infty} \frac{N_S(T)}{T^d} = 0$, then also 
  $\liminf_{T \to \infty} \frac{P_S(H_T)}{T^d} = 0$.
  % As seen above we have $|H_T| = \Omega(T^d)$.
  As seen above, we have
  $\liminf _{T \to \infty} \frac{|H_T|}{T^d} > 0$.
  Thus, for some large $T$ we have
  $$
  P_S(H_T) \leq |H_T|.
  $$
\end{proof}

% \begin{remark}
% ENTÄ JOS ENEMMÄN VÄREJÄ??
% \end{remark}

Together with Lemma \ref{lemma: non-trivial annihilator} and Theorem \ref{main result 1} which are proved in the next section the above theorem yields the following corollary.

\begin{corollary}\label{corollary: low complexity meyer annihilator}
  Let $S \subseteq \R^d$ be a Meyer set.
  If 
  $$
  \liminf_{T \to \infty} \frac{N_S(T)}{T^d} = 0,
  $$ 
  then $S$ has a non-trivial annihilator.
  In particular, $S$ has an annihilator of the form
  $$
  (X^{\bv_1}-1) \cdots (X^{\bv_m}-1).
  $$
\end{corollary}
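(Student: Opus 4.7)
The plan is to chain together the three results that the corollary itself flags as its ingredients. First, I apply Theorem \ref{thm: low complexity Meyer}: because $S$ is a Meyer set with $\liminf_{T\to\infty} N_S(T)/T^d = 0$, there exists a radius $T$ such that the $\R^d$-shape $H_T = U \cup (S \cap B_T)$, where $U = (S+S-S) \cap B_R$, satisfies $P_S(H_T) \leq |H_T|$. That is, the indicator function $\mathbbm{1}_S$ is an $\R^d$-configuration of low complexity with respect to the shape $H_T$.

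Second, I invoke Lemma \ref{lemma: non-trivial annihilator}, the $\R^d$-analog of Lemma \ref{th:low_complexity}: low complexity with respect to a shape $D$ forces the existence of a non-trivial periodizer of $\mathbbm{1}_S$ supported inside $-D$. Since any non-trivial periodizer can be converted to a non-trivial annihilator by multiplication by a suitable difference $\R^d$-polynomial $X^{\bv}-1$ (with $\bv$ chosen so that the resulting convolution kills the strongly periodic output), $\mathbbm{1}_S$ has a non-trivial annihilator $f$. Moreover, since $\mathbbm{1}_S$ takes values in $\{0,1\} \subseteq \Z$, I can ensure (via the $\R^d$-version of the integrality remark for annihilators) that $f$ has integer coefficients.

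Third, I apply Theorem \ref{main result 1}: an integral $\R^d$-configuration with a non-trivial integral annihilator admits an annihilator of the special product-of-differences form $(X^{\bv_1}-1)\cdots(X^{\bv_m}-1)$. Applying this to $\mathbbm{1}_S$ with the annihilator $f$ produced above yields exactly the annihilator claimed in the corollary.

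The main obstacle in writing the proof cleanly is the bookkeeping between periodizers and annihilators in the $\R^d$ setting and verifying that integrality of the annihilator can indeed be assumed — both points are analogs of standard facts in the $\Z^d$ theory, but since the paper explicitly introduces and proves the $\R^d$-versions (Lemma \ref{lemma: non-trivial annihilator} and Theorem \ref{main result 1}) before citing them here, the corollary follows immediately by composition, and there is no new obstruction beyond invoking those results.
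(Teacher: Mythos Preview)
Your proposal is correct and follows essentially the same three-step chain as the paper's own proof: Theorem~\ref{thm: low complexity Meyer} gives low complexity with respect to some $\R^d$-shape, Lemma~\ref{lemma: non-trivial annihilator} then yields a non-trivial periodizer (hence annihilator), and Theorem~\ref{main result 1} upgrades this to the product-of-differences form. Your extra care about the periodizer-to-annihilator passage and the integrality of the annihilator is fine (and indeed the paper's proof glosses over both points), but neither introduces a genuine difference in approach.
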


\begin{proof}
  By Theorem \ref{thm: low complexity Meyer} the Meyer set $S$ has low complexity with respect to some $\R^d$-shape.
  Consequently, by Lemma \ref{lemma: non-trivial annihilator} it has a non-trivial annihilator and hence by Theorem \ref{main result 1} it has an annihilator of the desired form.
\end{proof}

\subsection*{A related conjecture}

Finally, let us mention a related conjecture which is already known to be false.

\begin{conjecture}[Lagarias and Pleasants 2003 \cite{lagarias_pleasants_ETDS}]
Any non-periodic Delone set $S \subseteq \R^d$ satisfies
$$
\limsup_{T \to \infty} \frac{N_S(T)}{T^d} > 0.
$$
\end{conjecture}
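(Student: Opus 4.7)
The final statement is the Lagarias--Pleasants conjecture, and the paper explicitly remarks that it is already known to be false. Consequently a full proof cannot be produced; my plan is to describe the most natural attack one would mount, identify the precise step that must fail, and indicate how counterexamples are built.

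I would work by contrapositive: assume $\limsup_{T\to\infty} N_S(T)/T^d = 0$ and attempt to force $S$ to be periodic. First, the growth hypothesis immediately implies that $S$ has finite local complexity, since otherwise $N_S(T)$ is infinite for some finite $T$. Next, I would try to upgrade finite local complexity to the Meyer property under this growth bound; if this succeeded, Corollary \ref{corollary: low complexity meyer annihilator} would apply (the assumption $\limsup = 0$ is stronger than the $\liminf = 0$ required there) and deliver a non-trivial annihilator of product-of-differences form $(X^{\bv_1}-1)\cdots(X^{\bv_m}-1)$ for $\mathbbm{1}_S$. The last step would invoke a forced-periodicity theorem such as Theorem \ref{thm: forced periodicity Delone FLC} to translate the annihilator into a genuine period of $S$.

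The main obstacle --- and the reason the conjecture is false --- lies in this final step. Theorem \ref{thm: forced periodicity Delone FLC} requires structural conditions on the annihilator (relating its support and the geometry of $\supp(c)$) that are not guaranteed by the annihilator produced via Corollary \ref{corollary: low complexity meyer annihilator}. More fundamentally, by Theorem \ref{thm:generalized decomposition theorem} a configuration with a non-trivial annihilator decomposes as $\mathbbm{1}_S = c_1 + \ldots + c_m$ with each $c_i$ periodic in some direction, but such a sum of $\R^d$-periodic functions with distinct period directions can perfectly well equal the indicator function of a genuinely non-periodic Delone set. Moreover, the intermediate upgrade from finite local complexity to the Meyer property under the given growth rate is itself not automatic. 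Known refutations of the conjecture exploit precisely these gaps, producing non-periodic FLC (and in some instances Meyer) Delone sets via cut-and-project or model-set constructions whose patch-counting functions grow strictly slower than $T^d$. Any valid proof of the statement as worded would therefore have to bypass the algebraic-annihilator route entirely, and in view of these counterexamples such a proof cannot exist.
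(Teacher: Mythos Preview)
Your overall conclusion --- that the conjecture is false and hence admits no proof --- matches the paper. However, your account of \emph{how} it fails diverges from the paper's and contains a factual error about the counterexamples.

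The paper does not attempt a proof at all; it simply reports that the conjecture was refuted in \cite{cassaigne-example}. The counterexamples cited there are, for each $d \geq 3$, non-periodic binary configurations $c \in \{0,1\}^{\Z^d}$ with $\lim_{n\to\infty} P_c(n)/n^d = 0$; taking $S = \{\bu \in \Z^d \mid c_{\bu} = 1\}$ then yields a non-periodic Delone set with $\limsup_{T\to\infty} N_S(T)/T^d = 0$. These sets are \emph{not} produced by cut-and-project or model-set constructions as you claim --- regular model sets typically have patch-complexity of order exactly $T^d$ and so could not serve as counterexamples. The actual counterexamples are subsets of the integer lattice $\Z^d$ and are therefore automatically Meyer sets (since $S-S \subseteq \Z^d$). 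Consequently your worry about the ``upgrade from FLC to Meyer'' is a red herring: that step succeeds for the known counterexamples, and Corollary~\ref{corollary: low complexity meyer annihilator} genuinely applies to them, delivering a non-trivial annihilator.

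The failure is therefore located entirely in the final step, and here your diagnosis is roughly correct but imprecise. Theorem~\ref{thm: forced periodicity Delone FLC} does not merely impose ``structural conditions relating the support of the annihilator to the geometry of $\supp(c)$''; it requires an annihilator with a vertex in \emph{every} direction $\bw$, whereas the low-complexity argument produces only a single annihilator. Possessing one annihilator of the form $(X^{\bv_1}-1)\cdots(X^{\bv_m}-1)$ simply does not force periodicity once $d \geq 3$, and the Cassaigne-type examples witness exactly this.
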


\noindent
As mentioned, the conjecture has already proven to be false in \cite{cassaigne-example}
where it is given
for any $d \geq 3$ an example of
a non-periodic binary configuration $c \in \{0,1\}^{\Z^d}$ such that
$$
\lim_{n \to \infty} \frac{P_c(n)}{n^d} = 0
$$
where $P_c(n)=P_c(\{0,\ldots,n-1\}^d)$.
One can then define a $d$-dimensional Delone set $S \subseteq \Z^d$ such that $\bu \in S$ if and only if $c_{\bu}=1$.
It satisfies
$$\limsup_{T \to \infty} \frac{N_S(T)}{T^d} = 0.$$
% $
% N_S(\sqrt{2}n) = P_c(n)
% $
% for any integer $n$ and hence
% $$
% \limsup \frac{N_S(T)}{T^d} ...
% $$
So, this construction shows that the above conjecture is false.

% \begin{example}
%   \textcolor{red}{
%   Laske HT JOLLEKIN KONKREETTISELLE DELONE JOUKOLLE JOLLA EI EHKÄ AIVAN ILMISELVÄSTI OO ANNIHILAATTORIA
%   Lagarias Pleasants esimerkki!!!!!!}
% \end{example}

However, if in the conjecture we strengthen the assumption that the Delone set $S$ is non-periodic and instead assume that it is a Meyer set and has no non-trivial annihilators, then the statement of the conjecture holds by Corollary \ref{corollary: low complexity meyer annihilator}.
Indeed, we have the following theorem.

\begin{theorem}
Any Meyer set $S \subseteq \R^d$ with no non-trivial annihilators satisfies
$$
\liminf_{T \to \infty} \frac{N_S(T)}{T^d} > 0.
$$
\end{theorem}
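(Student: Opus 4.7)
The plan is to prove this theorem as a direct contrapositive consequence of Corollary \ref{corollary: low complexity meyer annihilator}, which has already done all the substantive work. First, I would recall that the corollary asserts that any Meyer set $S \subseteq \R^d$ satisfying $\liminf_{T \to \infty} N_S(T)/T^d = 0$ must admit a non-trivial annihilator (indeed, one of a very specific product-of-difference-polynomials form). The theorem we wish to prove is precisely the contrapositive of this implication, restricted to the class of Meyer sets.

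Concretely, I would let $S$ be a Meyer set with no non-trivial annihilator and argue by contradiction: suppose $\liminf_{T \to \infty} N_S(T)/T^d = 0$. Then the hypothesis of Corollary \ref{corollary: low complexity meyer annihilator} is satisfied, so $S$ has a non-trivial annihilator, contradicting our assumption on $S$. Since $N_S(T)/T^d \geq 0$ for all $T > 0$, the $\liminf$ is a nonnegative extended real, so ruling out the value $0$ gives $\liminf_{T \to \infty} N_S(T)/T^d > 0$, as required.

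There is no real obstacle in this argument — all the analytic content (the finiteness of $U = (S+S-S) \cap B_R$ from Lemma \ref{lemma:the set U  is finite for Meyer}, the complexity bound $P_S(H_T) \leq 1 + |U| N_S(T+R)$ from Lemma \ref{lemma:Meyer bound for the complexity of H_T}, the lower bound $\liminf_{T\to\infty} |H_T|/T^d > 0$, and the passage from low complexity to a non-trivial annihilator via Lemma \ref{lemma: non-trivial annihilator} and Theorem \ref{main result 1}) was already absorbed into the proof of the corollary. The theorem is therefore best presented as a short remark stating that it is exactly the contrapositive of Corollary \ref{corollary: low complexity meyer annihilator}.
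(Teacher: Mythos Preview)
Your proposal is correct and matches the paper's own proof, which likewise derives the theorem in one line as the contrapositive of Corollary~\ref{corollary: low complexity meyer annihilator}. Your added remark that $N_S(T)/T^d \geq 0$ forces the $\liminf$ to be strictly positive once $0$ is excluded is a small clarification the paper leaves implicit, but otherwise the arguments are identical.
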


\begin{proof}
Let $S$ be a Meyer set with no non-trivial annihilators.
Then by Corollary \ref{corollary: low complexity meyer annihilator} we have
$$\liminf_{T \to \infty} \frac{N_S(T)}{T^d} > 0.$$
\end{proof}

\section{Delone configurations with annihilators} \label{sec: delone configurations with annihilators}

\subsection{General setting}

We begin with the following direct generalization of Lemma \ref{th:low_complexity}.
For completeness, we provide a proof for the claim. 
% to the proof of Lemma \ref{th:low_complexity}.
% Let us denote by $L(V) = \{ a_1 \vec{v}_1 + \ldots + a_m \vec{v}_m \mid a_1, \ldots , a_m \in \F \} \subseteq \F^d$ the subspace spanned by the set $V = \{\vec{v}_1, \ldots, \vec{v}_m \} \subseteq \F^d$ where $\F$ is a field. 
The orthogonal complement of a linear subspace $U$ 
of an inner-product space is denoted by $U^{\bot}$. In the following, the inner-product is the scalar product of complex vectors. For a complex number $z\in\C$, we denote its complex conjugate by $\overline{z}$, as usual.

\begin{lemma} \label{lemma: non-trivial annihilator}
  Let $c \in \C^{\R^d}$ be an $\R^d$-configuration and let $D = \{ \bd_1, \ldots , \bd_m \} \Subset \R^d$ be an $\R^d$-shape.
  If $c$ has low complexity with respect to $D$, that is, if
  $P_c(D) \leq m$, then $c$ has a periodizer of the form 
  $$a_1 X^{-\bd_1} + \ldots + a_m X^{-\bd_m}$$
for some non-zero $(a_1,\ldots,a_m) \in \C^m$.
\end{lemma}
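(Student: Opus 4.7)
The plan is to mimic the argument from Lemma \ref{th:low_complexity} for $\Z^d$-configurations, and the generalization is essentially cosmetic because we never use any integrality of positions, only finite cardinality of a certain set of vectors.

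First, I would define the sampling map $\phi \colon \R^d \to \C^m$ by
\[
\phi(\bt) = \big(c(\bt+\bd_1),\ldots,c(\bt+\bd_m)\big),
\]
and set $V = \phi(\R^d) \subseteq \C^m$. The hypothesis $P_c(D) \leq m$ says precisely that $|V| \leq m$, because each vector $\phi(\bt)$ records the $D$-pattern of $c$ at position $\bt$ (up to reordering of the coordinates by $D$).

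Next, I would look at the affine hull $\mathrm{aff}(V) \subseteq \C^m$. Since $V$ has at most $m$ elements, $\dim\mathrm{aff}(V) \leq m-1$, so there exists an affine hyperplane containing $V$: concretely, a nonzero vector $(a_1,\ldots,a_m) \in \C^m$ and a constant $c_0 \in \C$ with
\[
\sum_{i=1}^m a_i v_i = c_0 \quad \text{for every } v=(v_1,\ldots,v_m) \in V.
\]
The nonzero-ness of $(a_1,\ldots,a_m)$ is the one point that needs a moment's care; I would argue that among all affine functionals vanishing on $V$, the ones whose linear part is zero are just the trivial $0 = 0$, so a genuine nontrivial affine functional must have nonzero $(a_1,\ldots,a_m)$.

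Finally, setting $f(X) = a_1 X^{-\bd_1} + \cdots + a_m X^{-\bd_m}$ and using the convolution formula $(fc)(\bt) = \sum_i a_i c(\bt+\bd_i) = \sum_i a_i \phi(\bt)_i$, I obtain $(fc)(\bt) = c_0$ for all $\bt \in \R^d$. A constant $\R^d$-function is strongly periodic, so $f$ is a periodizer of $c$ of the claimed form. I expect no real obstacle here; the only step requiring attention is the nontriviality of $(a_1,\ldots,a_m)$, which follows immediately from the affine-dimension bound.
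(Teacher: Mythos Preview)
Your proof is correct and follows essentially the same approach as the paper. The paper embeds the pattern vectors into $\C^{m+1}$ via $\bv \mapsto (1,c_{\bd_1+\bv},\ldots,c_{\bd_m+\bv})$ and takes a nonzero vector in the orthogonal complement of their span; your affine-hull argument in $\C^m$ is the standard equivalent reformulation of that same linear-algebra step, and both conclude that $fc$ is a constant function.
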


\begin{proof} 
  Consider the set
  $$
  V = \left \{ (1, c_{\vec{d}_1+\vec{v}}, \ldots, c_{\vec{d}_m+\vec{v}}) \mid \vec{v} \in \R^d \right \}.
  $$
  The subspace $L(V) \subseteq \C^{m+1}$ generated by $V$ has dimension at most $m$ since $|V| = P_c(D) \leq  m$.
  Thus, $L(V)^{\bot}$ is non-trivial. Let $ (\overline{a}_0, \ldots, \overline{a}_m) \in L(V)^{\bot} \setminus \{ \vec{0} \}$. Then
  $$
  a_0 \cdot 1 + a_1 \cdot c_{\vec{d}_1 + \vec{v}} + \ldots + a_m \cdot c_{\vec{d}_m+\vec{v}} = 0
  $$
  for all $\vec{v} \in \R^d$. 
  Thus, $a_1 X^{- \vec{d}_1} + \ldots + a_n X^{- \vec{d}_m}$ is a non-trivial periodizer of $c$.
\end{proof}

% \begin{example}
%   
%   The corresponding Delone set has low complexity and hence a non-trivial annihilator by the above lemma.
%   However, $c$ does not have any non-trivial annihilators.
% \end{example}

As noted earlier, annihilation by a line $\R^d$-polynomial does not necessarily imply periodicity.
However, if an $\R^d$-configuration has a line $\R^d$-polynomial annihilator of a particularly simple type, namely a power of a difference $\R^d$-polynomial, then it is periodic as the following lemma shows.

\begin{lemma}\label{lemma:difference polynomial power annihilator}
  Assume that a configuration $c \in \A^{\R^d}$ has an annihilator of the form $g(X)=(X^{\bv}-1)^k$ for some positive integer $k$ and $\bv \in \R^d \setminus \{\vec{0}\}$.
  Then $c$ is periodic in direction $\bv$.
  More precisely, $c$ is $p \bv$-periodic for some non-zero integer $p$.
  % TÄSSÄ VOI OLLA ONGELMA!!!!!!!!! EI KYLLÄ TÄÄ TOIMII VOIS EHKÄ PERUSTELLA VÄHÄN ENEMMÄN JOS ON JOTAIN EPÄSELVYYDEN VAARAA
\end{lemma}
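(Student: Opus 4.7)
The plan is to unfold the convolution equation $(X^{\bv}-1)^k c = 0$ into a pointwise scalar identity along each line parallel to $\bv$, and then exploit the fact that $\A$ is finite. Using the binomial expansion $(X^{\bv}-1)^k = \sum_{j=0}^{k} \binom{k}{j} (-1)^{k-j} X^{j\bv}$ and the definition of convolution, the annihilation $((X^{\bv}-1)^k c)(\bu) = 0$ becomes the pointwise identity
$$\sum_{j=0}^{k} \binom{k}{j} (-1)^{k-j} c(\bu - j\bv) = 0$$
holding for every $\bu \in \R^d$.

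Next I would fix an arbitrary base point $\bu \in \R^d$ and pass to the one-parameter restriction $a_n := c(\bu + n\bv)$ for $n \in \Z$. Applying the identity above with $\bu$ replaced by $\bu + n\bv$ yields the linear recurrence
$$\sum_{j=0}^{k} \binom{k}{j}(-1)^{k-j} a_{n-j} = 0 \qquad \text{for every } n \in \Z,$$
which is equivalent, up to the overall sign $(-1)^k$, to the statement that the $k$-th backward finite difference of the bi-infinite sequence $(a_n)_{n \in \Z}$ vanishes identically. A routine induction on $k$ (the base case $k=1$ says $a_n = a_{n-1}$; the inductive step reduces $\Delta^{k} a = 0$ to $\Delta^{k-1}$ of the constant sequence $\Delta a$) then shows that $a_n$ agrees with a polynomial in $n$ of degree at most $k-1$.

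This is where the finiteness of the alphabet does the decisive work. Since $c \in \A^{\R^d}$ with $\A \Subset \C$, every value $a_n$ lies in the finite set $\A$, so the polynomial that represents $(a_n)$ takes only finitely many values on $\Z$. A non-constant univariate polynomial evaluated on all of $\Z$ takes infinitely many distinct values, so the polynomial must be constant. Therefore $a_n$ is constant in $n$, which means $c(\bu + n\bv) = c(\bu)$ for all $n \in \Z$; since $\bu$ was arbitrary, $c$ is $\bv$-periodic and in particular $p\bv$-periodic with $p = 1$.

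There is no genuine obstacle: the only step that requires care is the reduction from the purely algebraic hypothesis on convolution in $\R^d$ to a one-dimensional recurrence on each line $\bu + \R\bv$, and the passage from "$k$-th finite difference vanishes" to "polynomial of degree $\leq k-1$" is entirely elementary. The philosophical point worth emphasizing is that the argument crucially uses the finitary assumption $\A \Subset \C$; without it, any polynomial-valued configuration along direction $\bv$ would satisfy the hypothesis of the lemma without being periodic.
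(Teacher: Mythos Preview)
Your proof is correct and takes a genuinely different route from the paper. Both arguments begin by restricting to the one-dimensional fibers $a_n = c(\bu + n\bv)$ and expanding $(X^{\bv}-1)^k$ via the binomial theorem. From there the paper invokes the general fact that a $\Z$-configuration over a finite alphabet annihilated by any non-trivial one-variable polynomial is periodic (pigeonhole on the $k$-tuple of consecutive values), obtains a period $p_{\bu}$ for each fiber, and then argues that these periods are uniformly bounded so that a common $p$ exists. You instead exploit the specific shape of the annihilator: vanishing $k$-th finite difference forces each fiber to be a polynomial of degree at most $k-1$, and a polynomial taking values in a finite set on all of $\Z$ must be constant. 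Your approach is more elementary and yields the sharper conclusion $p=1$ immediately, with no need to bound periods across fibers; the paper's approach, in exchange, applies verbatim to any line $\R^d$-polynomial whose support lies in $\bu + \Z\bv$ (as the paper notes in the remark following the lemma), whereas your polynomial-growth argument is special to powers of $X^{\bv}-1$.
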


\begin{proof}
  By the binomial theorem we have $g(X) = \sum_{j=0}^k a_j X^{j \bv}$ where $a_0,a_k \neq 0$.
  Let us define a $\Z$-configuration $e^{(\bu)} \in \A^{\Z}$ for any $\bu \in \R^d$ such that
  $$
  e^{(\bu)}_i = c_{\bu + i \bv}
  $$
  for all $i \in \Z$.
  It follows that $g' = \sum_{j=0}^k a_j x^{j}$ annihilates $e^{(\bu)}$ since
  $$
  (g'e^{(\bu)})_i = \sum_{j=0}^k a_j e^{(\bu)}_{i-j} = \sum_{j=0}^k a_j c_{\bu + (i-j) \bv} = (gc)_{\bu + i \bv} = 0
  $$
  for all $i \in \Z$.
  Hence, each $e^{(\bu)}$ is periodic.
  Moreover, the smallest periods of $e^{(\bu)}$ are bounded and hence $c$ is periodic in direction $\bv$.
  In fact, $c$ is $p \bv$-periodic where $p$ is a common period of the configurations $e^{(\bu)}$.
  The claim follows.
  % Consider an arbitrary $\bv$-fiber $e$ of $c$.
  % In other words $c$ is a one-dimensional generalized configuration defined by
  % $$
  % e_{i} = c_{\bu + i \bv}
  % $$
  % for some
  % $\bu \in \R^d$.
\end{proof}

\begin{remark}
  In the proof of the above lemma we in fact proved that if an $\R^d$-configuration $c$ is annihilated by a line $\R^d$-polynomial $f$ such that
  $$
  \supp(f) \subseteq \bu + \Z \bv
  $$
  for some $\bu,\bv \in \R^d$, then it is periodic.
  % Indeed, assume that $c$ has such annihilator $f$, that is, we have
  % $$
  % f = \sum a_j X^{q_j \bv}
  % $$
  % where each $q_j$ is a rational. Let $m$ be a common multiple of the denominators of the rational numbers $q_j$.
  % Then
\end{remark}

The following theorem is a generalization of Theorem \ref{special annihilator configurations}.

\begin{theorem} \label{main result 1}
  Let $c$ be an integral $\R^d$-configuration and assume that it has a non-trivial integral annihilator $f$. 
  Then for every $\bu \in \supp(f)$ it has an annihilator of the form
  $$
  (X^{\vec{v}_1} - 1) \cdots (X^{\vec{v}_m} - 1)
  $$
  where each $\bv_i$ is parallel to $\bu_i - \bu$ over $\Q$ for some $\bu_i \in \supp(f) \setminus \{ \bu  \}$.
  Moreover, the vectors $\bv_i$ can be chosen to be pairwise linearly independent over $\Q$.
\end{theorem}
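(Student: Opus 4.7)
The plan is to reduce the $\R^d$-statement to the already established $\Z^d$-statement (Theorem \ref{thm: special annihilator precise formulation}) by a lattice reduction inside $\R^d$, and then to run a short cleanup to enforce pairwise $\Q$-linear independence of the resulting directions.

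First I would fix $\bu \in \supp(f)$ and let $H$ denote the additive subgroup of $\R^d$ generated by the finite set $\{\bu_i - \bu \mid \bu_i \in \supp(f)\}$. Since $H$ is finitely generated and torsion-free, the structure theorem for finitely generated abelian groups gives $H \cong \Z^k$ for some $k$; fix a $\Z$-basis $\bb_1,\ldots,\bb_k$ of $H$ and write $\bu_i - \bu = \sum_j n_{ij}\bb_j$ with $n_{ij}\in\Z$. For each coset $C = \by + H$ in $\R^d/H$, the $\Z^k$-configuration
$$
c^{(C)}(\mathbf{n}) = c\bigl(\by + n_1\bb_1 + \cdots + n_k\bb_k\bigr)
$$
is integral over the same alphabet as $c$, and — because the equation $(fc)(\bx)=0$ only couples values of $c$ within a single coset of $H$ — every $c^{(C)}$ is annihilated by the same $\Z^k$-polynomial $\tilde f(Y) = \sum_i f_{\bu_i} Y^{(n_{i1},\ldots,n_{ik})}$. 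By Lemma \ref{lemma2} there is then a common dilation constant $r$ for the whole family with respect to $\tilde f$, and by Theorem \ref{thm: special annihilator precise formulation} applied with the distinguished support point corresponding to $\bu$ (which is the origin of $\Z^k$), every $c^{(C)}$ is annihilated by $\prod_{\bu_i \neq \bu}(Y^{r(n_{i1},\ldots,n_{ik})}-1)$. Translating back to $\R^d$-polynomials yields the annihilator
$$
g(X) \;=\; \prod_{\bu_i \in \supp(f) \setminus \{\bu\}} \bigl(X^{r(\bu_i-\bu)} - 1\bigr)
$$
of $c$, and each direction $r(\bu_i-\bu)$ is a positive integer multiple of $\bu_i-\bu$, so parallel to it over $\Q$.

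To upgrade $g$ to an annihilator with pairwise $\Q$-independent directions, I would iterate the following cleanup. Suppose two factors $X^{\bv_i}-1$ and $X^{\bv_j}-1$ of the current annihilator are $\Q$-parallel; write $\bv_i = q\bw$ and $\bv_j = p\bw$ for positive integers $p,q$ and some $\bw \in \R^d$. Let $h$ be the product of the other factors. Then $hc$ is finitary and integral, and is annihilated by $(X^{q\bw}-1)(X^{p\bw}-1)$. Since $X^{pq\bw}-1$ is divisible in $\Z[X^{\pm 1}]$ by both $X^{q\bw}-1$ and $X^{p\bw}-1$, the polynomial $(X^{pq\bw}-1)^2$ is a $\Z[X^{\pm 1}]$-multiple of their product and hence also annihilates $hc$. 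Lemma \ref{lemma:difference polynomial power annihilator} then yields a positive integer $N$ with $(X^{N\bw}-1)\,hc = 0$, so $(X^{N\bw}-1)\,h$ is an annihilator of $c$ containing one fewer factor, and its new direction $N\bw$ remains $\Q$-parallel to $\bv_i$, hence to the originally chosen $\bu_i-\bu$. Iterating until no two surviving directions are $\Q$-parallel produces the annihilator claimed in the theorem.

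The main obstacle I anticipate is this cleanup step: one must justify that $\Q$-parallel difference factors can be absorbed into a single one without losing the direction constraint. It rests on the divisibility $(X^{q\bw}-1)(X^{p\bw}-1) \mid (X^{pq\bw}-1)^2$ in $\Z[X^{\pm 1}]$, combined with the $\R^d$-analogue Lemma \ref{lemma:difference polynomial power annihilator}. Once that is in hand, the rest of the argument is a routine change-of-coordinates transfer of the known $\Z^d$-statement.
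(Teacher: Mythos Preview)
Your proposal is correct and follows essentially the same two-stage strategy as the paper: reduce to the $\Z^d$ result via a family of lattice configurations indexed by cosets, invoke the common dilation constant (Lemma \ref{lemma2}) together with Theorem \ref{thm: special annihilator precise formulation}, and then merge $\Q$-parallel factors using Lemma \ref{lemma:difference polynomial power annihilator}. The only notable variation is in the coordinatization: you pick a $\Z$-basis of the group $H=\Z[\,\bu_i-\bu\,]$ and work in $\Z^k$, whereas the paper works in $\Z^n$ with $n=|\supp(f)|$ via $(i_1,\ldots,i_n)\mapsto i_1\bu_1+\cdots+i_n\bu_n$ (a map that need not be injective); both routes lead to the same $\R^d$-annihilator $\prod_{\bu_i\neq\bu}(X^{r(\bu_i-\bu)}-1)$. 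One small wrinkle in your cleanup: writing $\bv_i=q\bw$, $\bv_j=p\bw$ with $p,q$ \emph{positive} integers is not always possible when $\bv_i,\bv_j$ point in opposite directions along the line, but since $X^{\bv}-1$ and $X^{-\bv}-1$ differ by a unit you may replace $\bv_i$ by $-\bv_i$ first, after which the divisibility $(X^{q\bw}-1)(X^{p\bw}-1)\mid (X^{pq\bw}-1)^2$ and the rest of your argument go through unchanged.
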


\begin{proof}
  Assume that $f = \sum_{i=1}^n f_i X^{\bu_i}$ for some non-zero $f_i \in \Z$ where $\supp(f) = \{ \vec{u}_1, \ldots , \vec{u}_n \}$. 
  Define for any $\vec{u} \in \R^d$ a $\Z^n$-configuration 
  $c^{(\vec{u})} \in \A^{\Z^n}$
  over the same finite alphabet $\A$ as $c$ such that 
  $$
  c^{(\vec{u})}(i_1, \ldots, i_n) = c(\vec{u} + i_1 \vec{u}_1 + \ldots + i_n \vec{u}_n)
  $$
  for all 
  $(i_1,\ldots,i_n) \in \Z^n$. 
  
  The polynomial $g = 
  g(T) = g(t_1, \ldots, t_n) = \sum_{i=1}^n f_i t_i = \sum_{i=1}^n f_i T^{\be_i}$ is a non-trivial annihilator of $c^{(\vec{u})}$ for any $\vec{u} \in \R^d$ since 
  \begin{align*}
  (g c^{(\bu)})(i_1,\ldots,i_n) 
  &= f_1 c^{(\bu)}(i_1-1,i_2,\ldots,i_n) + \ldots + f_n c^{(\bu)}(i_1,\ldots,i_{n-1},i_n-1) \\
  &= f_1 c(\bu + (i_1-1) \bu_1 + i_2 \bu_2 + \ldots + i_n \bu_n) + \ldots \\
  & \ \ \ + f_n c(\bu + i_1 \bu_1 + \ldots + i_{n-1}\bu_{n-1} + (i_n - 1) \bu_n) \\
  &
  = (fc)(\bu + i_1 \bu_1 + \ldots + i_n \bu_n) \\
  &= 0
  \end{align*}
  for all $(i_1,\ldots,i_n) \in \Z^n$.
  
  By Lemma \ref{lemma2} the $\Z^n$-configurations $c^{(\bu)}$ have a common dilation constant with respect to $g$. 
  Moreover, by Theorem \ref{thm: special annihilator precise formulation} they have for all $\vec{e} \in \supp(g) = \{ \vec{e}_1, \ldots , \vec{e}_n \}$ a common annihilator of the form
  $$
  (T^{\vec{w}_1}-1) \cdots (T^{\vec{w}_m}-1)
  $$
  where each $\vec{w}_i$ is parallel to $\vec{e}_j - \vec{e}$ for some $\vec{e}_j \in \supp(g) \setminus \{\vec{e}\}$, that is, $\vec{w}_i = k \vec{e}_j - k \vec{e}$ for some non-zero integer $k$.
  % Thus, we have $\bw_i = k(\be_j - \be)$ and hence
  % the generalized polynomial
  % is an annihilator of $e^{(\vec{u})}$ for all $\vec{u} \in \R^d$.
  
  Let us denote $\vec{w}_i = (w_{i,1}, \ldots , w_{i,n})$ for $i \in \{ 1, \ldots , m \}$ and let $i_0 \in \{1,\ldots,n\}$ be such that $\be = \be_{i_0}$. 
  Note that each $\bw_i$ has exactly two non-zero entries, and they are equal to $k$ and $-k$.
  % Note that all except one of the entries of $\bw_i$ are zero.
  It follows that the $\R^d$-polynomial
  $$
  (X^{w_{1,1} \vec{u}_1 + \ldots + w_{1,n} \vec{u}_n} - 1) \cdots (X^{w_{m,1} \vec{u}_1 + \ldots + w_{m,n} \vec{u}_n} - 1)
  $$
  is an annihilator of $c$ such that each $w_{i,1} \vec{u}_1 + \ldots + w_{i,n} \vec{u}_n$ is parallel to $\bu_j - \bu_{i_0}$.
  This holds for all $i_0 \in \{1,\ldots,n\}$ and hence we have proved the first part of the claim.

  For the ``moreover'' part, assume 
  that
  $$
  g = (X^{\vec{v}_1} - 1) \cdots (X^{\vec{v}_m} - 1)
  $$ annihilates $c$,
  and assume that for some distinct $i,j \in \{1,\ldots,m\}$ the vectors $\bv_i$ and $\bv_j$ are parallel over $\Q$, that is, $q\bv_i= p \bv_j$ for some $p,q \in \Z\setminus\{0\}$.
  % Hence, $q \bv_i = p \bv_j$.
  We may replace $X^{\bv_i}-1$ in $g$ by $X^{p \bv_j}-1 = X^{q \bv_i}-1$.
  Since 
  $$
  X^{p \bv_j}-1 = (X^{\bv_j}-1)(X^{(p-1)\bv_j} + \ldots + X^{\bv_j} + 1),
  $$
   we may replace $(X^{p\bv_j}-1)(X^{\bv_j}-1)$ by $(X^{p\bv_j}-1)^2$.
  Iterating this argument we conclude that $c$ has an annihilator
  $$
  (X^{\bw_1}-1)^{e_1} \cdots (X^{\bw_k}-1)^{e_k}
  $$
  where the vectors $\bw_1,\ldots,\bw_k$ are pairwise linearly independent over $\Q$ and $e_1,\ldots,e_k$ are positive integers.
  Consequently, the $\R^d$-configuration 
  $$(X^{\bw_2}-1)^{e_2} \cdots (X^{\bw_k}-1)^{e_k}c$$ 
  is annihilated by $(X^{\bw_1}-1)^{e_1}$.
  By Lemma \ref{lemma:difference polynomial power annihilator} it is periodic in direction $\bw_1$.
  Repeating this argument 
  % for $j \in \{2,\ldots,k\}$ 
  we conclude that $c$ is annihilated by
  $$
  (X^{r_1 \bw_1}-1) \cdots (X^{r_k \bw_k}-1)
  $$
  for some non-zero $r_1,\ldots,r_k\in \Z$ and hence we have proved the claim.
\end{proof}

% Next we improve the above theorem and show that the vectors $\bv_i$ can be chosen pairwise linearly independent over $\Q$.

% \begin{theorem}
%   Vectors $\bv_i$ can be chosen pairwise linearly independent over $\Q$
% \end{theorem}

% \noindent
% \emph{Remark.}
% Suuntia ei välttämättä saada riippumattomiksi kuten esimerkki osoittaa..
\noindent
The vectors $\bv_1,\ldots,\bv_m$ in the above theorem may not be pairwise linearly independent over $\R$ as the following example shows.
However, in the following subsection we see that if $c$ is a Delone configuration of finite local complexity, then they can be chosen to be pairwise linearly independent over $\R$.
The following example gives also an example of a non-periodic $\R$-configuration with a non-trivial annihilator.
% The following example shows also that annihilation does not imply periodicity even in the one-dimensional setting for arbitrary generalized configurations.

\begin{example}\label{example:1D_annihilator_non-periodic}
  % Vastaesimerkki suunnista..
  Consider the binary $\R$-configurations $c_1,c_2 \in \{0,1\}^{\R}$ defined such that
  $$
  c_1(i) = 
  \begin{cases}
    1 &\text{, if } i \in \Z \\
    0 &\text{, otherwise}
  \end{cases}
  $$
  and
  $$
  c_2(i) = 
  \begin{cases}
    1 &\text{, if } i \in \alpha \Z  \\
    0 &\text{, otherwise}
  \end{cases}
  $$
  where $\alpha$ is an irrational number.
  % $c_1(x) = \sum_{i \in \Z} x^i$ and $c_2(x) = \sum_{i \in \Z} x^{i \alpha}$ where $\alpha$ is an irrational number. 
  Both $c_1$ and $c_2$ are periodic, that is, annihilated by non-trivial difference $\R$-polynomials. 
  More precisely, $c_1$ is annihilated by $x - 1$ and $c_2$ is annihilated by $x^{\alpha} - 1$. 
  Consequently, their sum $c = c_1 + c_2$ has a non-trivial annihilator 
  $(x - 1)(x^{\alpha} - 1)$ but $c$ is non-periodic since $\alpha$ is irrational.
  If in the above theorem $\bv_1,\ldots,\bv_m$ could be chosen to be pairwise linearly independent over $\R$, this would mean in the 1-dimensional setting that $m=1$ and hence $c$ would be necessarily periodic.
  % Since in the one-dimensional setting any non-zero generalized polynomial with at least two non-zero entries is a generalized line polynomial, this example shows also that annihilation of a generalized configuration by a generalized line polynomial does not imply periodicity.
  Note that the support of $c$ is not a Delone set.
\end{example}

\subsection{Delone configurations of finite local complexity}
\label{sec:Delone configurations of finite local complexity with annihilators}

We have seen that any integral $\R^d$-configuration with non-trivial annihilators has an annihilator which is a product of difference $\R^d$-polynomials.
Moreover, the directions of these difference $\R^d$-polynomials can be chosen to be pairwise linearly independent over $\Q$.
However, in Example \ref{example:1D_annihilator_non-periodic} we noticed that the directions of these difference $\R^d$-polynomials cannot always be chosen to be pairwise linearly independent over $\R$.

In this section we study Delone configurations of finite local complexity and improve Theorem \ref{main result 1}.
We show that if the $\R^d$-configuration in consideration is a Delone configuration of finite local complexity, then the directions of the difference $\R^d$-polynomial factors of the special annihilator in fact can be chosen to be pairwise linearly independent over $\R$ instead of just $\Q$.

% In this section we prove Theorem \ref{main result 2}.
We begin with some observations concerning Delone configurations of finite local complexity with non-trivial annihilators.
The following lemma generalizes the fact that any $\Z$-configuration with a non-trivial annihilator is necessarily periodic.

\begin{lemma} \label{DCFLC}
  Let $c \in \A^{\R}$ be a 1-dimensional Delone configuration of finite local complexity and let $S$ be the support of $c$ with uniform discreteness constant $r$ and relative denseness constant $R$.
  Assume that $c$ has a non-trivial annihilator $f$. 
  Then $c$ is periodic.
  Moreover, a period of $c$ belongs to the finite set $(S-S) \cap B_{2MR}$ where 
  $M \leq |\A|^{|(S-S) \cap B_{2R}|^{\lfloor\frac{k}{2r} \rfloor }}$ and $k$ is such that $\supp(f) \subseteq [0,k]$.
  % $\supp(f)$, $|(S -S)\cap B_{2R}|$, and $|\A|$.
  % \textcolor{red}{Moreover, a period of $c$ belongs to the finite set $(S-S) \cap B_{2MR}$ for some particular positive integer $M$.}
\end{lemma}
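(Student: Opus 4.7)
The plan is to use the annihilator $f$ as a recurrence that determines $c$ from its values on any interval of length $k$, and then to apply a pigeonhole argument over the finitely many possible length-$k$ labeled blocks of $c$ anchored at points of $S$. Writing $\supp(f) = \{w_1 < \cdots < w_n\} \subseteq [0,k]$ with every $f_{w_i}$ nonzero, the equation $fc = 0$ reads $f_{w_1}c(u-w_1) + \cdots + f_{w_n}c(u-w_n) = 0$ for every $u \in \R$. Since $f_{w_1} \neq 0$, this gives a \emph{forward} recurrence expressing the value $c(t)$ at an arbitrary point $t$ as a linear combination of the values of $c$ at the points $t - (w_i - w_1)$ for $i = 2, \ldots, n$, all of which lie in $[t-k,\, t-(w_2-w_1)] \subseteq [t-k, t)$; symmetrically, $f_{w_n} \neq 0$ yields a \emph{backward} recurrence using values in $(t, t+k]$. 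Iterating the forward recurrence starting from a known length-$k$ window extends the known interval by $w_2 - w_1 > 0$ at each application, so $c|_{[s,\,s+k]}$ determines $c$ on all of $\R$. Consequently, if two distinct points $s, s' \in S$ satisfy $c(s+v) = c(s'+v)$ for every $v \in [0,k]$, then $c$ is $(s'-s)$-periodic.

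For each $s \in S$ I define the block $B_s$ to consist of the translated patch $(S-s) \cap [0,k]$ together with its $\A$-labeling $v \mapsto c(s+v)$. By relative denseness and uniform discreteness, consecutive points of $S$ have gaps in $(S-S) \cap (0, 2R]$ of size at least $2r$, so any length-$k$ patch of $S$ contains at most $\lfloor k/(2r) \rfloor$ gaps and hence at most $\lfloor k/(2r) \rfloor + 1$ points. Therefore the number $M$ of distinct blocks is finite. The loose upper bound $M \leq |\A|^{|(S-S) \cap B_{2R}|^{\lfloor k/(2r) \rfloor}}$ follows by encoding each block as a function from the finite set $(S-S) \cap [0,k]$ into $\A$ (using the value $0$ for positions outside the support), and noting that every element of $(S-S) \cap [0,k]$ is a sum of at most $\lfloor k/(2r) \rfloor$ consecutive gaps each drawn from $(S-S) \cap B_{2R}$, so $|(S-S) \cap [0,k]| \leq |(S-S) \cap B_{2R}|^{\lfloor k/(2r) \rfloor}$.

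Finally, since $S$ is relatively dense and unbounded, I pick $M+1$ consecutive points $s_0 < s_1 < \cdots < s_M$ of $S$ and apply pigeonhole to obtain indices $i < j$ with $B_{s_i} = B_{s_j}$. By the first step, $t := s_j - s_i$ is a period of $c$, and since each consecutive gap is bounded by $2R$, we have $0 < t \leq (j-i)\cdot 2R \leq 2MR$, placing $t$ in $(S-S) \cap B_{2MR}$ as required. The main obstacle I anticipate is the recurrence-extension step: one application of the forward recurrence extends the known interval by only the minimum positive difference $w_2 - w_1$ in $\supp(f)$, so one must argue carefully that finitely many iterations suffice to cover any prescribed bounded region of $\R$, and in the limit all of $\R$. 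Once this is in place, the counting and pigeonhole portions are routine.
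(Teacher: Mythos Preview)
Your proposal is correct and follows essentially the same approach as the paper: use the annihilator as a two-sided recurrence so that $c|_{[s,s+k]}$ determines $c$ globally, then apply pigeonhole to the finitely many labeled $k$-blocks anchored at points of $S$ (bounded via the finitely many gap types from finite local complexity) to find $s_i,s_j\in S$ with matching blocks, yielding a period $s_j-s_i\in (S-S)\cap B_{2MR}$. The only cosmetic differences are that the paper anchors its window to the left ($s_i-[0,k]$) rather than to the right, and justifies the bound on $M$ by first counting unlabeled patches as gap-sequences and then labelings, whereas you encode blocks as functions from $(S-S)\cap[0,k]$ into $\A$; both routes give the stated bound.
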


\begin{proof}
  % Let $f$ be a non-trivial annihilator of $c$. 
  % and let $S = \supp(c)$ be the support of $c$ which is hence a Delone set of finite local complexity. 
  Without loss of generality we may assume that $\supp(f) = \{ t_0, t_1, \ldots , t_n \}$ where $0 < t_1 < \ldots < t_n$ and $t_0=0$.
  So, we have
  $f = \sum_{i=0}^n f_i x^{t_i}$ for some non-zero $f_0, \ldots , f_n$. %since if $f \in \Ann(c)$ then also $x^t f(x)$ is an annihilator of $c$ for every $t \in \R$. 
  Let $k$ be such that $\supp(f) \subseteq [0,k]$. 
  Since $S$ is 
  a Delone set, we can order it and write $S = \{ s_i \mid i \in \Z \}$ such that $s_i < s_{i+1}$ for every $i \in \Z$. 
  By the assumption that $S$ is a Delone set of finite local complexity, the set $(S-S) \cap B_{2R}$ is finite 
  % where $R$ is the covering radius of $S$ 
  and hence the set $\{ s_{i+1} - s_i \mid i \in \Z \} \subseteq (S-S) \cap B_{2R}$ of the lengths of intervals between two consecutive points of $S$ is finite. 
  This implies that for some $i < j$ 
  %$c$ restricted to $s_i - [0,k]$ equals to $c$ restricted to $s_j - [0,k]$. 
  we have $\tau^{-s_i}(c) \restriction _{- [0,k]} = \tau^{-s_j}(c) \restriction _{- [0,k]}$, that is, $c_{s_i-u}=c_{s_j-u}$ for all $u\in [0,k]$.
  
  Since $f$ is an annihilator of $c$, we have
  $$
  f_0 c_{t} + f_1 c_{t-t_1} + \ldots + f_n c_{t-t_n} = 0
  $$
  for every $t \in \R$. 
  We can solve $c_t$ and $c_{t-t_n}$ from the equation because $f_0$ and $f_n$ are non-zero. 
  Thus, the content of $c$ in the interval $t - [0,k]$ determines whole $c$ since $\supp(f) \subseteq [0,k]$ 
  and
  hence the condition $\tau^{-s_i}(c) \restriction _{- [0,k]} = \tau^{-s_j}(c) \restriction _{- [0,k]}$ (or equivalently $\tau^{s_j-s_i}(c) \restriction _{- [0,k]}=c\restriction _{- [0,k]}$) which we obtained above
  implies that $c$ is $(s_j - s_i)$-periodic.
  %However, there are generalized configurations that have non-trivial annihilators but that are not periodic. Define for example $c_1(\vec{x}) = \sum_{i \in \Z} \vec{x}^i$ and $c_2(\vec{x}) = \sum_{i \in \Z} \vec{x}^{i \alpha}$ where $\alpha$ is any irrational number.   
  % Proof that the period is bounded....
  
  Let $M = j -i$.
  % We may choose $M$ such that $M \leq |\A|^{|(S-S) \cap B_{2R}|^{\lfloor\frac{k}{2r} \rfloor }}$.
The set $s_i - [0,k]$ contains $s_i$ and at most $\lfloor \frac{k}{2r} \rfloor$ other points of $S$.
Moreover, there are at most $|(S-S)\cap B_{2R}|$ different gaps between consecutive points of $S$.
Together, these imply that there are at most $|(S-S)\cap B_{2R}|^{\lfloor \frac{k}{2r} \rfloor}$ different sets $\supp(\tau^{-s_i}(c)) \cap (-[0,k])$.
Thus, there are at most $|\A|^{|(S-S)\cap B_{2R}|^{\lfloor \frac{k}{2r} \rfloor}}$ different functions 
$\tau^{-s_i}(c) \restriction _{-[0,k]}$.
So, we may choose $M=j-i \leq |\A|^{|(S-S)\cap B_{2R}|^{\lfloor \frac{k}{2r} \rfloor}}$.
  % Indeed, there exist at most $|\A|^{|(S-S) \cap - [0,k]|}$ different functions
  % $\tau^{s_j-s_i}(c) \restriction _{- [0,k]}$ and hence we may choose $i$ and $j$ such that $j-i \leq |\A|^{|(S-S) \cap - [0,k]|}$.
  % %  where $i$ and $j$ are as above.
  % The period $s_j - s_i$ belongs to the set $(S-S) \cap B_{2MR}$ which is a finite set since $S$ is a Delone set of finite local complexity.
  This concludes the proof.
\end{proof}

% \noindent
% If a family of 1-dimensional Delone configurations of finite local compexity has common uniform discreteness and relative denseness constants and a common annihilator, then the number $M$ from the above lemma can be chosen the same for all of them.

% \noindent
Recall that in Example \ref{example:1D_annihilator_non-periodic} we showed that the above lemma does not hold
for arbitrary $\R$-configurations.
However, the support of the $\R$-configuration in the example is not even a Delone set.
Later in this section we will see that the above lemma indeed holds for general Delone configurations, that is, we will see that if a 1-dimensional Delone configuration has a non-trivial annihilator, then it is periodic.
However, the proof in this more general case is more complicated than the proof of the above lemma.

\begin{remark}\label{remark: flc_subsets_1d_annihilation}
More generally, we can prove that any $\R$-configuration $c$ whose support is a subset of a Delone set of finite local complexity that has a non-trivial annihilator is necessarily periodic.
Indeed, if $c=0$, then it is trivially periodic, and if $c$ is non-zero, then the non-trivial annihilator defines a linear recurrence on $c$ and hence the support of $c$ is relatively dense.
This implies that it is a Delone set of finite local complexity since any relatively dense subset of a Delone set of finite local complexity is a Delone set of finite local complexity too.
The periodicity of $c$ follows from Lemma \ref{DCFLC}.
\end{remark}

% \begin{remark}\label{remark: flc_subsets_1d_annihilation}
% More generally, we can prove that any $\R^d$-configuration $c$ whose support is a subset of a Delone set of finite local complexity that has a non-trivial annihilator is necessarily periodic.
% Indeed, if $c=0$, then it is trivially periodic, and if $c$ is non-zero, then the non-trivial annihilator defines a linear recurrence on $c$ and hence the support of $c$ is relatively dense which implies that it is a Delone set of finite local complexity since any relatively dense subset of a Delone set of finite local complexity is a Delone set of finite local complexity too.
% The periodicity of $c$ follows from Lemma \ref{DCFLC}.
% \end{remark}

% So, we ask the following question.
% In the following section we will see that also this more general claim holds.

% \begin{question}
% Let $c$ be a one-dimensional Delone configuration and assume that it has a non-trivial annihilator.
% Is $c$ then necessarily periodic?
% \end{question}

% \noindent
Using Lemma \ref{DCFLC} we show that if a Delone configuration of finite local complexity has a line $\R^d$-polynomial annihilator, then it is periodic.
In the proofs of the following results for a non-zero vector $\bv \in \R^d$, by a $\bv$-fiber of an $\R^d$-configuration $c \in \A^{\R^d}$ we mean an $\R$-configuration $e^{(\bu)} \in \A^{\R}$ for some $\bu \in \R^d$ defined such that 
$$
e^{(\bu)}_r = c_{\bu + r \bv}
$$
for all $r \in \R$.

\begin{lemma} \label{line polynomial annihilator implies periodicity}
  Let $c \in \A^{\R^d}$ be a Delone configuration of finite local complexity and assume that it is annihilated by a line $\R^d$-polynomial $f$ in direction $\bv$. 
  Then $c$ is periodic in direction $\bv$.
\end{lemma}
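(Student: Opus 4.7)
The plan is to reduce to the $1$-dimensional case fiber-by-fiber. For each $\bu \in \R^d$, let $e^{(\bu)} \in \A^{\R}$ denote the $\bv$-fiber $e^{(\bu)}(r) = c(\bu + r\bv)$. After translating $\supp(f)$ so that $\supp(f) \subseteq \R\bv$, I write $f = \sum_{i=0}^n a_i X^{t_i \bv}$ with $0 = t_0 < t_1 < \ldots < t_n$ and $a_0, a_n \neq 0$. The identity $(fc)(\bu + r\bv) = 0$ reads $\sum_i a_i e^{(\bu)}(r - t_i) = 0$, so the non-trivial $1$-dimensional $\R$-polynomial $f' = \sum_i a_i x^{t_i}$ annihilates every fiber $e^{(\bu)}$.

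Next I would verify that each non-zero fiber is itself a $1$-dimensional Delone configuration of finite local complexity. Uniform discreteness of $\supp(e^{(\bu)})$ is inherited from that of $\supp(c)$; relative density follows from the recurrence imposed by $f'$, as $a_0, a_n \neq 0$ forces a non-zero value in every interval of length $t_n$; and the differences $\supp(e^{(\bu)}) - \supp(e^{(\bu)})$ lie in the set $A := \{ r \in \R : r\bv \in \supp(c) - \supp(c) \}$, which is locally finite because $\supp(c) - \supp(c)$ is. Then Lemma \ref{DCFLC} applies and yields that each non-zero fiber is periodic, with minimum period in the finite set $A \cap [0, 2 M_0 t_n]$, where $M_0$ is a uniform upper bound for the pigeonhole constant of Lemma \ref{DCFLC}, depending only on $|\A|$, $t_n$, the packing radius of $\supp(c)$, and $|A \cap B_{2 t_n}|$.

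I expect the main obstacle to be producing a single period valid for all fibers simultaneously. My key claim is that any two minimum fiber periods are commensurable. If $e^{(\bu_1)}$ and $e^{(\bu_2)}$ are non-zero with minimum periods $T_1, T_2$, their supports are $T_i$-periodic and uniformly discrete, hence of the form $A_i + T_i \Z$ for finite non-empty $A_i \subseteq \R$, giving $\supp(e^{(\bu_1)}) - \supp(e^{(\bu_2)}) = (A_1 - A_2) + T_1 \Z + T_2 \Z$. Every element $r$ of this difference set satisfies $(\bu_1 - \bu_2) + r\bv \in \supp(c) - \supp(c)$, so the difference set sits inside the locally finite set $\{ r \in \R : (\bu_1 - \bu_2) + r\bv \in \supp(c) - \supp(c) \}$. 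If $T_1 / T_2 \notin \Q$, then the group $T_1 \Z + T_2 \Z$ is dense in $\R$, forcing the difference set to be dense too, contradicting its containment in a locally finite set.

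Since the finitely many distinct minimum periods lying in $A \cap [0, 2 M_0 t_n]$ are pairwise commensurable, they are all rational multiples of a fixed $T_1^*$, and a common positive integer multiple $T > 0$ exists (write each ratio in lowest terms and take an appropriate LCM). Then $T\bv$ is a period of every non-zero fiber and trivially of every zero fiber, so $c(\bw + T\bv) = e^{(\bw)}(T) = e^{(\bw)}(0) = c(\bw)$ for every $\bw \in \R^d$, proving that $c$ is periodic in direction $\bv$.
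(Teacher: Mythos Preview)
Your proposal is correct and follows essentially the same approach as the paper: reduce to one-dimensional $\bv$-fibers, apply Lemma~\ref{DCFLC} to each non-zero fiber after checking it is a Delone configuration of finite local complexity, bound the finitely many minimum periods uniformly, and then use local finiteness of $\supp(c)-\supp(c)$ to force pairwise commensurability of the fiber periods and hence a common period. The paper's commensurability argument is phrased via approximating $np_1\bv$ by $mp_2\bv$ rather than via density of $T_1\Z+T_2\Z$, but this is the same idea.
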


\begin{proof}
    By multiplying $f$ with a suitable monomial we may assume that $f$ is of the form $f(X) = f_0 + f_1 X^{r_1 \vec{v}} + \ldots + f_n X^{r_n \vec{v}}$ where $0 < r_1 < \ldots < r_n$ with $n \geq 1$ and  $f_0,\ldots,f_n \neq 0$. 
    
    Consider an arbitrary $\bv$-fiber $e$ of $c$.
    Clearly, $e$ is annihilated by the polynomial 
    $f'(x) = f_0 + f_1 x^{r_1} + \ldots + f_n x^{r_n}$. 
    If $e=0$, then it is trivially periodic. 
    If $e \neq 0$, then $E = \supp(e)$ must be relatively dense since $f'$ defines a recurrence relation on $e$. 
    In fact, if $e \neq 0$, then $E$ is a Delone set of finite local complexity since $S=\supp(c)$ is a Delone set of finite local complexity.
    % and since $E$ is a projection of its subset. 
    Thus, $e$ is a Delone configuration of finite local complexity
    and hence
    by Lemma \ref{DCFLC} it is periodic.

    It remains to show that all the $\bv$-fibers of $c$ have a common non-zero period.
    Then it follows that $c$ is periodic in direction $\bv$.
    % Assume on the contrary that this is not the case.
    Note that the supports of the non-zero fibers of $c$ have a common uniform discreteness constant and a common relative denseness constant. So, the number $M$ from Lemma \ref{DCFLC} can be chosen to be the same for every fiber $e$.
    % By Lemma \ref{DCFLC} the smallest non-zero period of any non-zero fiber $e$ of $c$ is bounded.
    Hence, there exist only finitely many possibilities for the smallest non-zero period (in absolute value) of $e$.
    % Thus, there exist two $\bv$-fibers $e_1$ and $e_2$ of $c$ such that
    % all their periods are rationally independent, that is, for any non-trivial periods $p_1\in\Z\setminus\{0\}$ and $p_2\in\Z\setminus\{0\}$ of $e_1$ and $e_2$, respectively, we have $p_1 \neq q p_2$ for all $q \in \Q$. 
    % It follows that the support of $c$ is not a Delone set of finite local complexity.
    % % \textcolor{purple}{It follows that the support of $c$ is not a Delone set of finite local complexity. TÄTÄ PITÄÄ EHKÄ PERUSTELLA LISÄÄ??} 
    % A contradiction.
    Let $e_1$ and $e_2$ be two non-zero $\bv$-fibers of $c$ with smallest periods $p_1$ and $p_2$, respectively.
    There exist $\bu_1,\bu_2 \in S$ such that $\bu_1 + np_1\bv \in S$ and $\bu_2 + mp_2\bv \in S$ for all $n,m\in\Z$.
    Consequently, for any $\varepsilon >0$ there exist infinitely many $n$ and $m$ such that $np_1\bv$ and $mp_2\bv$ are within distance $\varepsilon$ from each other.
    Then $\bu_1 + np_1\bv$ and $\bu_2 + mp_2\bv$ are within distance $||\bu_1-\bu_2|| + \varepsilon$ from each other.
    % For all $n,m\in \Z$, also $np_1$ and $mp_2$ are periods of $e_1$ and $e_2$, respectively.
    % In particular, we have $np_1\bv, mp_2\bv \in S-S$ for all $n,m \in \Z$ where $S=\supp(c)$ is the support of $c$.
    % Since $S$ is a Delone set of finite local complexity and hence $S-S$ is locally finite, it follows that $np_1\bv= mp_2\bv$ for some $n,m\in\Z$.
    We conclude that $p_1$ and $p_2$ must be rationally dependent. Otherwise, $(S-S) \cap B_{||\bu_1-\bu_2|| + \varepsilon}$ is an infinite set which is a contradiction with the fact that $S$ is a Delone set of finite local complexity, that is, $S-S$ is a locally finite set.
    So, the finitely many smallest periods of the $\bv$-fibers of $c$ are rationally dependent.
    It follows that the $\bv$-fibers of $c$ have a common period.
    % The periods $p_1$ and $p_2$ belong to a finite set $F \Subset \R$.
    % For all $n,m \in \Z$, also $np_1$ and $mp_2$ are periods of $e_1$ and $e_2$, respectively.
    % There are infinitely many $n$ and $m$ such that $mp_ 1 \in F + np_2$
    % Thus, $c$ is periodic in direction $\bv$.
    % It follows that $c$ is not FLC.... A contradiction.
    % \textcolor{red}{Moreover, by the same lemma $e$ has a bounded period depending on $\supp(c)$ and $f$ which implies that all the $\bv$-fibers of $c$ have a common period. Thus, $c$ is periodic in direction $\bv$. EI TAIDA PITÄÄ PAIKKAANSA!!! PITÄÄ PAIKKAANSA JOS PERIODIT ON KAIKKI m/n $\bv$!! VASTAESIMERKKI??????}
\end{proof}

\begin{remark}\label{remark: subset FLC line polynomial annihilator}
Similarly to Remark \ref{remark: flc_subsets_1d_annihilation} we can prove that the above lemma holds more generally for any $\R^d$-configuration whose support is a subset of a Delone set of finite local complexity.
Indeed, if the support of $c$ is a subset of a Delone set of finite local complexity, then the support of each $\bv$-fiber of $c$ is a subset of a Delone set of finite local complexity (actually, it is either a Delone set of finite local complexity or the empty set).
The claim follows by Remark \ref{remark: flc_subsets_1d_annihilation}.
\end{remark}

Lemma \ref{line polynomial annihilator implies periodicity} does not hold for general Delone configurations, that is, there are non-periodic Delone configurations with line $\R^d$-polynomial annihilators as the following example shows.

\begin{example}\label{ex:non-periodic with gen line pol ann}
  Let $\alpha$ be an irrational real number.
  Consider the Delone configuration $c \in \{0,1\}^{\R^2}$ defined such that
  $$
  c_{\bu} = 
  \begin{cases}
    1 &, \text{ if $\bu = (k, i)$ for any $k\in \Z$ and $i\in \Z\setminus \{0\}$} \\
    1 &, \text{ if $\bu = (k \alpha, 0)$ for any $k\in \Z$} \\
    0 &, \text{ otherwise}
  \end{cases}.
  $$
  It is non-periodic but has
  a line $\R^d$-polynomial annihilator $(x-1)(x^{\alpha}-1)$.
%  See Figure \ref{fig:non-periodic with gen line polynomial annihilator} for an illustration of $c$ with $\alpha = \frac{1+\sqrt{5}}{2}$, the golden ratio.
\end{example}

Next, we prove our improvement of Theorem \ref{main result 1}.

% \begin{manualtheorem}{\ref{main result 2}}
%   Let $c$ be an integral generalized configuration and assume that it has a non-trivial annihilator $f$. Then for every $\bu \in \supp(f)$ the generalized configuration $c$ has an annihilator of the form
%   $$
%   (X^{\vec{v}_1} - 1) \cdots (X^{\vec{v}_m} - 1)
%   $$
%   where the vectors $\bv_1,\ldots,\bv_m$ are pairwise linearly independent and each $\bv_i$ is parallel to $\bu_i - \bu$ for some $\bu_i \in \supp(f) \setminus \{ \bu  \}$.  
% \end{manualtheorem}

\begin{theorem} \label{main result 2}
  Let $c$ be an integral Delone configuration of finite local complexity and assume that it has a non-trivial integral annihilator $f$. 
  Then for every $\bu \in \supp(f)$ it has an annihilator of the form
  $$
  (X^{\vec{v}_1} - 1) \cdots (X^{\vec{v}_m} - 1)
  $$
  where the vectors $\bv_1,\ldots,\bv_m$ are pairwise linearly independent over $\R$ and each $\bv_i$ is parallel to $\bu_i - \bu$ over $\R$ for some $\bu_i \in \supp(f) \setminus \{ \bu  \}$.
\end{theorem}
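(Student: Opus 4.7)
The plan is to bootstrap from Theorem \ref{main result 1} by iteratively merging pairs of difference $\R^d$-polynomial factors whose exponent vectors lie on the same line in $\R^d$ (i.e., are $\R$-linearly dependent, but necessarily with irrational ratio because Theorem \ref{main result 1} already gives pairwise $\Q$-linear independence). Start by applying Theorem \ref{main result 1} to obtain an annihilator
$$(X^{\bv_1}-1)\cdots(X^{\bv_m}-1)$$
of $c$ where each $\bv_i$ is parallel over $\Q$ (hence over $\R$) to some $\bu_i-\bu \in \supp(f) \setminus \{\bu\}$. If the $\bv_i$ are already pairwise $\R$-linearly independent, we are done. Otherwise two of them, say $\bv_1$ and $\bv_2$, satisfy $\bv_2 \in \R \bv_1$, and the following merge step reduces the number of factors.

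For the merge, set $g = (X^{\bv_3}-1)\cdots(X^{\bv_m}-1)$ and $c' = gc$. Then $(X^{\bv_1}-1)(X^{\bv_2}-1)$ is a line $\R^d$-polynomial annihilator of $c'$ in direction $\bv_1$. Although $c'$ need not itself be a Delone configuration, its support satisfies
$$\supp(c') \subseteq \supp(c) + \supp(g) = S + F,$$
where $F \Subset \R^d$ is finite. By Lemma \ref{closed under addition with a finite set}, $S+F$ is a Delone set of finite local complexity, so $\supp(c')$ is contained in a Delone set of finite local complexity. Remark \ref{remark: subset FLC line polynomial annihilator} then applies to $c'$ and yields that $c'$ is periodic in direction $\bv_1$; in particular $(X^{p\bv_1}-1)c' = 0$ for some nonzero real $p$. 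Therefore $c$ is annihilated by
$$(X^{p\bv_1}-1)(X^{\bv_3}-1)\cdots(X^{\bv_m}-1),$$
a product of only $m-1$ difference $\R^d$-polynomials. Since $p\bv_1$ is a real multiple of $\bv_1$, it remains parallel to $\bu_1-\bu$ over $\R$, so the parallelism condition is preserved.

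Iterating this merging procedure strictly decreases the number of factors at each step, so it terminates after finitely many steps. The terminal annihilator has the required form: its exponent vectors are pairwise $\R$-linearly independent, and each of them is parallel over $\R$ to some $\bu_i - \bu$ with $\bu_i \in \supp(f) \setminus \{\bu\}$. The main conceptual obstacle is handling the fact that after multiplying by the auxiliary factor $g$ the resulting configuration $c'$ typically fails to be a Delone configuration (and may even fail to be relatively dense); this is precisely what Remark \ref{remark: subset FLC line polynomial annihilator} is designed to overcome, by extending Lemma \ref{line polynomial annihilator implies periodicity} to configurations whose support is merely a subset of a Delone set of finite local complexity.
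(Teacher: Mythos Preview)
Your proof is correct and follows essentially the same approach as the paper: both start from Theorem \ref{main result 1}, then use Lemma \ref{closed under addition with a finite set} together with Remark \ref{remark: subset FLC line polynomial annihilator} to replace $\R$-parallel difference factors by a single difference factor. The only cosmetic difference is that the paper first groups all factors sharing an $\R$-direction into one line $\R^d$-polynomial $\varphi_i$ and then reduces each $\varphi_i$ to a single $(X^{r_i\bw_i}-1)$, whereas you merge two factors at a time; the key ideas and the invocation of the same auxiliary results are identical.
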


\begin{proof}
  % Let $c$ be a Delone configuration of finite local complexity and assume that it has a non-trivial annihilator ideal.
  By Theorem \ref{main result 1} for every $\bu \in \supp(f)$ there exist vectors $\vec{v}_1,\ldots,\vec{v}_m$ such that
  $$
  g=(X^{\vec{v}_1} - 1) \cdots (X^{\vec{v}_m} - 1)
  $$
  annihilates $c$ where each $\bv_i$ is parallel to $\bu_i - \bu$ over $\Q$ for some $\bu_i \in \supp(f) \setminus \{ \bu  \}$.
  %Let $\langle \vec{w}_1 \rangle, \ldots , \langle \vec{w}_k \rangle$ be the directions of the generalized difference polynomials in the above product. 
  
  Consider the $\R^d$-polynomial $g$ for some fixed $\bu \in \supp(f)$.
  Let $\vec{w}_1 , \ldots , \vec{w}_k$ be pairwise linearly independent vectors over $\R$ such that each $\bv_i$ is parallel to some $\bw_j$ over $\R$.
  Moreover, assume that $k$ is minimal, that is, each $\bw_j$ is parallel to some $\bv_i$.
  
  Clearly, $k \leq m$. 
  If $k=m$, then all the difference $\R^d$-polynomials in the product are in pairwise linearly independent directions over $\R$, and we are done.
  So, we may assume that $k<m$. 
  Define $\R^d$-polynomials $\varphi_1,\ldots,\varphi_k$ such that $\varphi_i$ is the product of every $X^{\vec{v}_{j}}-1$ that has direction $\vec{w}_i$. 
  % Thus, we just combine all the generalized polynomials in the product that have common direction. 
  Note that every $\varphi_i$ is a line $\R^d$-polynomial since a product of line $\R^d$-polynomials in the same direction is still a line $\R^d$-polynomial in that same direction. 
  Clearly, $g = \varphi_1 \cdots \varphi_k$
  % $$
  % \phi_1 \cdots \phi_k c = (X^{\vec{v}_1} - 1) \cdots (X^{\vec{v}_m} - 1) c = 0
  % $$
  and hence $\varphi_1$ is a line $\R^d$-polynomial annihilator of $\varphi_2 \cdots \varphi_k c$ since $gc=0$. 

Since the support of $c$ is a Delone set of finite local complexity, the support of $\varphi_2 \cdots \varphi_k c$ must be a subset of a Delone set of finite local complexity.
Thus, by Lemma \ref{line polynomial annihilator implies periodicity} and Remark \ref{remark: subset FLC line polynomial annihilator}, the configuration $\varphi_2 \cdots \varphi_k c$  is periodic in direction $\bw_1$.

  % % By Lemma \ref{lemma3} any $\bw_1$-fiber $e$ of the generalized configuration $\varphi_2 \cdots \varphi_k c$ is either a Delone configuration of finite local complexity or $0$. 
  % % If it is $0$, then it is trivially periodic.
  % % If it is a Delone configuration of finite local complexity, then by Lemma \ref{DCFLC} it is periodic. 
  % Using the same argument as in the proof of Lemma \ref{line polynomial annihilator implies periodicity} we conclude that the $\bw_1$-fibers of $c$ have a common period and hence $c$ is periodic in direction $\bw_1$.
  % % \textcolor{red}{it is periodic with bounded period. VAATINEE SAMANLAISEN PERUSTELUN KUIN AIKAISEMPI LEMMA....} 
  % Thus, in both cases $\varphi_2 \cdots \varphi_k c$ is annihilated by $X^{r_1 \vec{w}_1}-1$ for some $r_1 \in \R \setminus \{ 0 \}$.
  
  The argument can be repeated for each $i \in \{ 2,\ldots, k\}$ and hence we conclude that there exist $r_1,\ldots,r_k \in \R$ such that the $\R^d$-polynomial
  $$
  (X^{r_1 \vec{w}_1}-1) \cdots (X^{r_k \vec{w}_k}-1)
  $$
  annihilates $c$.
\end{proof}

\begin{remark}
  Again, we have, more generally that the above theorem holds for any integral $\R^d$-configuration whose support is a subset of a Delone set of finite local complexity.
\end{remark}

\subsection{Periodic decomposition theorem for $\R^d$-configurations}
\label{sec:Decomposition theorem for generalized configurations with a special annihilator}

The periodic decomposition theorem, {\it i.e.}, Theorem \ref{thm: regular decomposition theorem} states that if an integral $\Z^d$-configuration $c \in \A^{\Z^d}$ has a non-trivial integral annihilator, then it is a sum of finitely many periodic functions $c_1,\ldots,c_m \in \Z^{\Z^d}$.
In this section we prove a similar statement for $\R^d$-configurations.
In other words, we show that if an integral $\R^d$-configuration has a non-trivial integral annihilator, then it is a sum of finitely many periodic functions $c_1,\ldots,c_m \in \Z^{\R^d}$.
The proof of this result is a direct generalization of the proof of Theorem \ref{thm: regular decomposition theorem} using Theorem \ref{main result 1}.

We begin with two lemmas as in the proof of Theorem \ref{thm: regular decomposition theorem} in \cite{fullproofs}.

\begin{lemma} \label{genlemma1}
    Let $\bv_1\in\R^d$ and $\bv_2\in\R^d$ be linearly independent vectors over $\Q$ such that the difference $\R^d$-polynomial $X^{\bv_2}-1$ annihilates a function $c' \in \Z^{\R^d}$. 
    There exists a function $c \in \Z^{\R^d}$ such that $(X^{\bv_1}-1)c=c'$ and $(X^{\bv_2}-1)c = 0$.
    % If $c'\in \Z^{\R^d}$, then we may choose also $c \in \Z^{\R^d}$.
\end{lemma}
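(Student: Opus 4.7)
The plan is to construct $c$ by ``integrating'' $c'$ along the direction $\bv_1$, telescoping so that $(X^{\bv_1}-1)c = c'$, and then to exploit the $\bv_2$-periodicity of $c'$ to transfer it to $c$ for free. The key structural observation is that since $\bv_1$ and $\bv_2$ are linearly independent over $\Q$, the subgroup $L = \Z\bv_1 + \Z\bv_2 \subseteq \R^d$ is free abelian of rank $2$ on the basis $\{\bv_1,\bv_2\}$, so every element of $L$ has a unique expression $n\bv_1+m\bv_2$ with $(n,m)\in\Z^2$.

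Using the axiom of choice, I would fix a set $R \subseteq \R^d$ of representatives for the cosets of $L$ in $\R^d$; then every $\bu \in \R^d$ decomposes uniquely as $\bu = \br + n\bv_1 + m\bv_2$ with $\br\in R$ and $n,m\in\Z$. Define $c$ coset-by-coset by setting $c(\br + m\bv_2) := 0$ for every $\br\in R$ and $m\in\Z$, and telescoping along $\Z\bv_1$:
$$c(\br + n\bv_1 + m\bv_2) := -\sum_{k=1}^{n} c'(\br + k\bv_1 + m\bv_2) \quad (n>0),$$
with the analogous partial sum (negated) for $n<0$. Integer-valuedness is automatic from $c'\in\Z^{\R^d}$.

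By construction the telescoping identity gives $c(\bu-\bv_1) - c(\bu) = c'(\bu)$, which in the paper's convention $(fc)_{\bu} = \sum_{\bv\in\supp(f)} f_{\bv}\,c_{\bu-\bv}$ is exactly $(X^{\bv_1}-1)c = c'$. The only step that needs actual argument is the claim $(X^{\bv_2}-1)c = 0$, i.e.\ that $c$ is $\bv_2$-periodic. From the defining formula,
$$c(\br + n\bv_1 + (m-1)\bv_2) = -\sum_{k=1}^{n} c'(\br + k\bv_1 + (m-1)\bv_2),$$
and each summand equals $c'(\br + k\bv_1 + m\bv_2)$ because $c'$ is $\bv_2$-periodic by hypothesis, so the whole sum equals $c(\br + n\bv_1 + m\bv_2)$.

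There is no genuine obstacle. The only real subtlety is making the coset decomposition well-defined, which is precisely what the $\Q$-linear independence of $\bv_1,\bv_2$ guarantees; without it the integer coefficients $(n,m)$ would not be unique and the piecewise definition could clash with itself. Note also that full $\R$-linear independence of $\bv_1,\bv_2$ is \emph{not} needed: the construction works even when $\bv_1$ and $\bv_2$ happen to be $\R$-parallel, as long as they are $\Q$-independent.
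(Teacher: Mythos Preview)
Your proof is correct and follows essentially the same strategy as the paper: partition $\R^d$ into cosets, set $c=0$ along the $\bv_2$-direction on each coset, and propagate via the recurrence $c_{\bu-\bv_1}-c_{\bu}=c'_{\bu}$. The only cosmetic differences are that the paper works with cosets of $\Q[\bv_1,\bv_2]$ rather than of $\Z\bv_1+\Z\bv_2$, and verifies $(X^{\bv_2}-1)c=0$ indirectly (via the identity $(X^{\bv_1}-1)(X^{\bv_2}-1)c=(X^{\bv_2}-1)c'=0$ together with the initial condition) rather than by your direct summand-by-summand comparison; your version is arguably tidier, since the $\Z$-coset decomposition matches the step size of the recurrence exactly.
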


\begin{proof}
    % Let $f$ and $g$ be generalized line polynomials in linearly independent directions $\vec{u}$ and $\vec{v}$, respectively. Let $c$ be a generalized configuration annihilated by $g$. 
    % Without loss of generality we may assume that 
    % $$f(X) = f_0 + f_1 X^{\alpha_1 \vec{u}} + \ldots + f_n X^{\alpha_n \vec{u}}
    % $$
    % and 
    % $$g(X) = g_0 + g_1 X^{\beta_1 \vec{v}} + \ldots + g_m X^{\beta_m \vec{v}}
    % $$ 
    % where 
    % $\supp(f) = \{0, \alpha_1 \vec{u}, \ldots ,\alpha_n \vec{u} \}$,
    % $\supp(g) = \{ 0, \beta_1 \vec{v}, \ldots , \beta_m \vec{v} \}$, and $n,m \geq 1$, and $\bu$ and $\bv$ are linearly independent over $\Q$. 
    % Moreover, we assume that $0 < \alpha_1 < \ldots < \alpha_n$ and $0 < \beta_1 < \ldots < \beta_m$.
    
    % Since $\vec{u}$ and $\vec{v}$ are linearly independent the subspace $\langle \vec{u} , \vec{v} \rangle$ is two-dimensional. 
    The space $\R^d$ is partitioned to cosets modulo $\Q[\bv_1, \bv_2] = \{a \bv_1 + b \bv_2 \mid  a,b \in \Q \}$. 
    Let us fix an arbitrary $\vec{z} \in \R^d$ and consider the coset 
    $$
    \vec{z} + \Q[\vec{v}_1 , \bv_2]
    = \{ \vec{z} + a \bv_1 + b \bv_2 \mid a,b \in \Q \}.
    $$
    
    The equation $(X^{\bv_1}-1)c=c'$ is satisfied in the coset $\vec{z} + \Q[\vec{u} , \vec{v}]$ if and only if
    $$
    c_{\bz+(a-1)\bv_1 + b\bv_2} -c_{\bz+a\bv_1+b\bv_2} = c'_{a\bv_1+b\bv_2}
    $$
    % $$
    % f_n c_{\vec{z} + (a - \alpha_n) \vec{u} + b \vec{v}} + \ldots + f_1 c_{\vec{z} + (a - \alpha_1) \vec{u} + b \vec{v}} + f_0 c_{\vec{z} + a \vec{u} + b \vec{v}} = c'_{\vec{z} + a \vec{u} + b\vec{v}}
    % $$
    holds for every $a,b \in \Q$. 
    Set $c_{\vec{z} + b \vec{v}_2} = 0$. 
    % whenever $0 \leq a < \alpha_n$, 
    Then the above equation defines the rest for all $a,b \in \Q$. 
    % Repeating this for $\vec{z}$ we get a generalized configuration $e$ that satisfies $fe=c$.
    % Thus, we have $(X^{\bv_1}-1)c|_{\vec{z} + \Q[\vec{v}_1 , \vec{v}_2]} = c'|_{\vec{z} + \Q[\vec{v}_1 , \vec{v}_2]}$.
    Since $\vec{z}$ was arbitrary, we can do this in every coset and obtain $c$ such that $(X^{\bv_1}-1)c=c'$ in every coset and hence $(X^{\bv_1}-1)c=c'$ everywhere.
    
    Finally, let us show that $(X^{\bv_2}-1) c = 0$. It suffices to show that $(X^{\bv_2}-1)c = 0$ in an arbitrary coset $\vec{z} + \Q[\bv_1 , \bv_2]$. This follows from the computation
    $$
    (X^{\bv_1}-1)(X^{\bv_2}-1)c=(X^{\bv_2}-1)(X^{\bv_1}-1)c=(X^{\bv_2}-1)c'=0.
    $$
    Indeed, above we defined $c_{\vec{z} + b \bv_2} = 0$ for every $b \in \Q$.
    This implies that also $[(X^{\bv_2}-1)c]_{\vec{z} + b \bv_2} = 0$ for every $b \in \Q$. From the above recurrence relation it follows that $[(X^{\bv_2}-1)c]_{\vec{z} + a \vec{v}_1 + b \vec{v}_2} = 0$ for every $a,b \in \Q$. 
    % Since $\vec{z}$ was arbitrary, it follows that $gc = 0$ everywhere.
\end{proof}

% \begin{remark}\label{remark: generalized decomposition theorem remark 1}
% If in the above lemma $c \in \Z^{\R^d}$ and $f$ is a difference $\R^d$-polynomial, then we may choose $e \in \Z^{\R^d}$ as well.
% \end{remark}

\begin{lemma} \label{genlemma2}
    Let $c$ be an integral $\R^d$-configuration and let 
    $\bv_1,\ldots,\bv_m \in \R^d$ be pairwise linearly independent vectors over $\Q$.
    % $f_1, \ldots , f_m$ line $\R^d$-polynomials in pairwise linearly independent directions over $\Q$.
    If the product $(X^{\bv_1}-1) \cdots (X^{\bv_m}-1)$ of difference $\R^d$-polynomials annihilates $c$, 
    then there exist functions $c_1, \ldots, c_m \in \Z^{\R^d}$ such that $(X^{\bv_i}-1) c_i = 0$ for each $i$ and 
    $$
    c = c_1 + \ldots + c_m.
    $$
\end{lemma}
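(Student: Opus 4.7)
The plan is to prove the statement by induction on $m$, using Lemma \ref{genlemma1} as the inductive ``integration'' step.

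The base case $m=1$ is trivial: setting $c_1 = c$ works since $(X^{\bv_1}-1)c = 0$ by hypothesis. For the inductive step, assume the claim holds for $m-1$. Suppose $(X^{\bv_1}-1)\cdots(X^{\bv_m}-1)c = 0$. The idea is to first ``differentiate'' $c$ by $X^{\bv_1}-1$, apply induction to the derivative, and then ``integrate back'' via Lemma \ref{genlemma1}, which is precisely what is set up for this purpose since the $\bv_i$'s are pairwise $\Q$-linearly independent.

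Concretely, set $c' = (X^{\bv_1}-1)c \in \Z^{\R^d}$. Then $(X^{\bv_2}-1)\cdots(X^{\bv_m}-1)c' = 0$, so by the induction hypothesis applied to the pairwise $\Q$-independent vectors $\bv_2,\ldots,\bv_m$, we can write $c' = c'_2 + \ldots + c'_m$ with each $c'_i \in \Z^{\R^d}$ satisfying $(X^{\bv_i}-1)c'_i = 0$. For each $i \in \{2,\ldots,m\}$, apply Lemma \ref{genlemma1} with the pair $\bv_1, \bv_i$ (which are $\Q$-linearly independent by assumption) and the $\bv_i$-periodic function $c'_i$ to obtain $c_i \in \Z^{\R^d}$ with $(X^{\bv_1}-1)c_i = c'_i$ and $(X^{\bv_i}-1)c_i = 0$. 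Finally, define
$$
c_1 = c - c_2 - \ldots - c_m.
$$
Then $c_1 \in \Z^{\R^d}$ and
$$
(X^{\bv_1}-1)c_1 = (X^{\bv_1}-1)c - \sum_{i=2}^m (X^{\bv_1}-1)c_i = c' - \sum_{i=2}^m c'_i = 0,
$$
so $c_1$ is $\bv_1$-periodic. By construction $c = c_1 + \ldots + c_m$, completing the induction.

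I do not expect a serious obstacle here, since Lemma \ref{genlemma1} was tailored exactly to make this induction go through and the $\Z^d$-analogue (Theorem \ref{thm: regular decomposition theorem}) is proved by precisely this pattern in \cite{fullproofs}. The only minor point to verify is that each constructed $c_i$ genuinely takes integer values, which is guaranteed by Lemma \ref{genlemma1} (it produces an integer-valued solution whenever its input is integer-valued), and that the pairwise $\Q$-linear independence of the $\bv_i$'s, which is needed to invoke Lemma \ref{genlemma1} at each step, is inherited by any sub-collection of the original vectors.
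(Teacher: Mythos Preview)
Your proof is correct and follows essentially the same approach as the paper's own proof. The only difference is cosmetic: the paper differentiates by the last factor $X^{\bv_m}-1$ and applies induction to $\bv_1,\ldots,\bv_{m-1}$, while you differentiate by the first factor $X^{\bv_1}-1$ and apply induction to $\bv_2,\ldots,\bv_m$; otherwise the structure (induction, Lemma~\ref{genlemma1} for the integration step, and defining the remaining summand as the leftover) is identical.
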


\begin{proof}
    % Proof is by induction on $m$ using Lemma \ref{genlemma1} just like in \cite{fullproofs}.
    The proof is by induction on $m$.
    For $m=1$ the claim is clear.
    Assume then that $m \geq 2$ and that the claim holds for $m-1$.
    Since $(X^{\bv_m}-1) c$ is annihilated by $(X^{\bv_1}-1)\cdots (X^{\bv_{m-1}}-1)$, by the induction hypothesis there exist functions $c'_1, \ldots , c'_{m-1} \in \C^{\R^d}$ such that 
    $$
    (X^{\bv_m}-1) c = 
    c'_1 + \ldots + c'_{m-1}
    $$
    and each $c'_i$ is annihilated by $X^{\bv_i}-1$.
    By Lemma \ref{genlemma1} for each $i \in \{1,\ldots,m-1 \}$ there exists a function $c_i$ such that $(X^{\bv_m}-1) c_i = c'_i$ and $(X^{\bv_i}-1)c_i=0$.
    Set $c_m = c - c_1 - \ldots - c_{m-1}$.
    Then clearly $c = c_1+\ldots+c_m$ and moreover
    \begin{align*}
    (X^{\bv_m}-1) c_m &= (X^{\bv_m}-1) (c -c_1 - \ldots -c_{m-1}) \\
    & = (X^{\bv_m}-1) c - (X^{\bv_m}-1)c_1 - \ldots - (X^{\bv_m}-1)c_{m-1} \\
    & = c'_1 + \ldots + c'_{m-1} - c'_1 - \ldots - c'_{m-1} \\ &= 0.
    \end{align*}
    Hence, $c=c_1+\ldots+c_m$ and each $c_i$ is annihilated by $(X^{\bv_i}-1)$.
    The claim follows.
\end{proof}

% \begin{remark}\label{remark: generalized decomposition theorem remark 2}
% If in the above lemma the line $\R^d$-polynomials $f_1,\ldots,f_m$ are difference $\R^d$-polynomials, then by Remark \ref{remark: generalized decomposition theorem remark 1} we may choose
% $c_1, \ldots, c_m \in \Z^{\R^d}$.
% \end{remark}

Now, we can prove our periodic decomposition theorem.

\begin{theorem}[Periodic decomposition theorem for $\R^d$-configurations] 
% [Decomposition theorem for generalized configurations with special annihilators] 
\label{thm:generalized decomposition theorem}
  Let $c$ be an integral $\R^d$-configuration with a non-trivial integral annihilator. Then there exist periodic functions $c_1, \ldots , c_m \in \Z^{\R^d}$ such that
  $$
  c = c_1 + \ldots + c_m.
  $$
\end{theorem}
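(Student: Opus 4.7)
The plan is to combine Theorem \ref{main result 1} with the two preparatory lemmas (Lemma \ref{genlemma1} and Lemma \ref{genlemma2}) that were just established. This gives an essentially immediate proof: the hard work is already encapsulated in those results.

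First, I would invoke Theorem \ref{main result 1} applied to $c$ and its non-trivial integral annihilator. This gives, for any chosen $\bu \in \supp(f)$, an annihilator of $c$ of the form
$$
g = (X^{\bv_1} - 1)(X^{\bv_2} - 1) \cdots (X^{\bv_m} - 1),
$$
where the vectors $\bv_1, \ldots, \bv_m$ are pairwise linearly independent over $\Q$ (this linear independence over $\Q$ is the crucial output, as Example \ref{example:1D_annihilator_non-periodic} shows one cannot in general ensure independence over $\R$, but $\Q$-independence is exactly what Lemma \ref{genlemma2} needs).

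Next, I would apply Lemma \ref{genlemma2} directly to $c$ and this annihilator $g$. The lemma produces integral functions $c_1, \ldots, c_m \in \Z^{\R^d}$ such that
$$
c = c_1 + \cdots + c_m
$$
and $(X^{\bv_i} - 1) c_i = 0$ for each $i$. The condition $(X^{\bv_i} - 1) c_i = 0$ is exactly the statement that $c_i$ is $\bv_i$-periodic, and since each $\bv_i$ is a non-zero vector, each $c_i$ is a periodic function in $\Z^{\R^d}$. This yields the claimed decomposition.

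Since everything follows mechanically from the prior results, there is no real obstacle at this final step; any difficulty has already been absorbed into proving Theorem \ref{main result 1} (producing the product-of-differences annihilator with $\Q$-independent direction vectors) and into the inductive construction of Lemma \ref{genlemma2}. The only thing worth double-checking in writing the proof is that integrality is preserved throughout: Lemma \ref{genlemma1}'s construction sets certain coefficients to $0$ and then determines the rest by a subtraction recurrence with integer right-hand side, so integer coefficients are indeed propagated, and hence each $c_i \in \Z^{\R^d}$ as required.
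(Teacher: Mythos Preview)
Your proposal is correct and matches the paper's proof essentially line for line: invoke Theorem \ref{main result 1} to obtain an annihilator $(X^{\bv_1}-1)\cdots(X^{\bv_m}-1)$ with the $\bv_i$ pairwise linearly independent over $\Q$, then apply Lemma \ref{genlemma2} to get the periodic decomposition. Your remark about integrality being preserved through the recurrence in Lemma \ref{genlemma1} is a valid sanity check, though the paper simply relies on the lemma statements already asserting $c_i \in \Z^{\R^d}$.
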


\begin{proof}
By Theorem \ref{main result 1} the $\R^d$-configuration $c$ is annihilated by
$$
(X^{\vec{v}_1}-1)\cdots(X^{\vec{v}_m}-1)
$$
for some pairwise linearly independent $\bv_1,\ldots,\bv_m$ over $\Q$.
By Lemma \ref{genlemma2} there exist functions $c_1, \ldots , c_m \in \Z^{\R^d}$ such that their sum is $c$ and each $c_i$ is annihilated by $X^{\bv_i}-1$ and hence $\vec{v}_i$-periodic.
\end{proof}

\subsection{One-dimensional Delone configurations}

Next, we prove that any 1-dimensional Delone configuration with a non-trivial integral annihilator is periodic.

First, we need to define a class of $\Z^d$-configurations.
Denote by 
$$
C_m = \{ -m , \ldots , m\}^d 
$$
the discrete $d$-dimensional hypercube of size $(2m+1)^d$ centered at the origin for any $m \in \N$.
We say that a configuration $c \in \A^{\Z^d}$ is \emph{sparse} if there exists a positive integer $a\in\Z_+$ such that
$$
|\supp(c) \cap (C_m+ \bt)| \leq a m
$$
for all $m \in \Z_+$ and $\bt \in \Z^d$.

In the following we call
a function $c \in \C^{\Z^d}$ a $\bv$-fiber for a non-zero vector $\bv \in \Q^d$ if its support is contained in a line in direction $\bv$, that is, if $\supp(c) \subseteq \bu + \Q \bv$ for some $\bu \in \Z^d$.
The following theorem states that any sparse configuration that has an annihilator which is a product of finitely many line polynomials is a sum of finitely many periodic fibers.

\begin{theorem}[\cite{decompositions}]\label{thm: sparse with annihilators}
Let $c\in\A^{\Z^d}$ be a sparse configuration and assume that it is annihilated by a product $\varphi_1 \cdots \varphi_n$ of line polynomials $\varphi_1,\ldots,\varphi_n$ in pairwise linearly independent directions $\bv_1,\ldots,\bv_n$, respectively.
  Then
  $$
  c= c_1 + \ldots + c_n
  $$
  where each $c_i$ is sum of finitely many periodic $\bv_i$-fibers and $\varphi_i c_i =0$.
\end{theorem}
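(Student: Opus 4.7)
The plan is to induct on the number of factors $n$. For the base case $n=1$, the equation $\varphi_1 c = 0$ decouples into a non-trivial one-dimensional linear recurrence along each $\bv_1$-fiber of $c$: after pulling out a monomial we may write $\varphi_1 = p(X^{\bv_1})$ for a one-variable Laurent polynomial $p$ with non-zero leading and trailing coefficients, so on every coset of $\Z\bv_1$ in $\Z^d$ the restriction of $c$ satisfies this recurrence. Since $c$ takes values in the finite set $\A$, pigeonhole on windows of length $\deg p$ forces each non-zero $\bv_1$-fiber to be periodic with a common period bound $P$ depending only on $|\A|$ and $\deg p$. Sparsity then rules out infinitely many non-zero fibers: each non-zero fiber contributes density at least $1/P$ to its line, so any box $C_m + \bt$ cut by $N$ non-zero fibers of substantial length accumulates $\Omega(Nm/P)$ points of $\supp(c)$, and the bound $am$ forces $N = O(aP)$ uniformly in $m$ and $\bt$. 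Sweeping $\bt$ and letting $m\to\infty$ exhausts the quotient $\Z^d/\Z\bv_1$ while keeping $N$ uniformly bounded, so the total number of non-zero fibers is finite, and we may take $c_1 = c$.

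For the inductive step I would set $d := \varphi_n c$. Then $\varphi_1\cdots\varphi_{n-1}$ annihilates $d$, and $d$ is sparse with values in a finite subset of $\C$, because $\supp(d) \subseteq \supp(c) + \supp(\varphi_n)$ and $\supp(\varphi_n)$ is bounded. The induction hypothesis yields $d = d_1 + \ldots + d_{n-1}$ with each $d_i$ a sum of finitely many periodic $\bv_i$-fibers and $\varphi_i d_i = 0$.

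The crux is to lift this decomposition back through $\varphi_n$. For each $i < n$ I would construct $c_i$ satisfying $\varphi_n c_i = d_i$, $\varphi_i c_i = 0$, and $c_i$ a sum of finitely many periodic $\bv_i$-fibers. Since $d_i$ lives on finitely many $\bv_i$-lines and $\bv_n$ is linearly independent from $\bv_i$, convolution by $\varphi_n$ couples only a finite strip of $\bv_i$-lines to $\supp(d_i)$: one solves the 1D recurrence $\varphi_n c_i = d_i$ in direction $\bv_n$ across this strip, using the periodicity of $d_i$ on each affected $\bv_i$-line to produce a periodic lift and imposing $c_i = 0$ on a terminal line of the strip to pin down initial conditions, and sets $c_i = 0$ on all $\bv_i$-lines outside the strip. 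Commutativity of $\varphi_i$ and $\varphi_n$ gives $\varphi_n(\varphi_i c_i) = \varphi_i d_i = 0$, and since $\varphi_i c_i$ inherits support on finitely many $\bv_i$-lines, a second application of the base case (in direction $\bv_n$, to $\varphi_i c_i$, which is sparse and annihilated by $\varphi_n$) forces $\varphi_i c_i = 0$. Finally, set $c_n := c - c_1 - \ldots - c_{n-1}$; then $\varphi_n c_n = d - d = 0$, $c_n$ is sparse as a finite sum of sparse functions, and the base case in direction $\bv_n$ expresses $c_n$ as a sum of finitely many periodic $\bv_n$-fibers.

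The main obstacle I anticipate is the lifting step. The equation $\varphi_n c_i = d_i$ leaves a large gauge freedom --- one may add any solution $h$ of the homogeneous equation $\varphi_n h = 0$ --- and the lift must be chosen so that its support stays on finitely many $\bv_i$-lines and so that the induced $\varphi_i c_i$ vanishes exactly rather than merely being $\varphi_n$-annihilated. I expect the sparsity of $c$ together with the pairwise linear independence of $\bv_i$ and $\bv_n$ to force essentially a unique ``finite-strip'' lift, but making this precise while simultaneously enforcing $\varphi_i c_i = 0$ is the delicate heart of the argument.
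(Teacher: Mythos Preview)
This theorem is not proved in the present paper; it is quoted from the external reference~\cite{decompositions} and used as a black box, so there is no in-paper argument to compare against.

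On its own merits your base case is correct, but the inductive lifting step has a real gap, one you partly anticipate. The construction you describe --- solve $\varphi_n c_i = d_i$ along $\bv_n$-cosets with zero initial data on one side of a finite strip of $\bv_i$-lines --- does not in general yield a $c_i$ supported on finitely many $\bv_i$-lines: once the recurrence passes through $\supp(d_i)$ it becomes homogeneous but typically non-zero, and the solution then propagates unboundedly in the $\bv_n$-direction. Worse, for a given $d_i$ there may be \emph{no} admissible lift at all. Take $\varphi_n = X^{\bv_n}-1$ and let $d_i$ be a single non-zero periodic $\bv_i$-fiber; summing the equation $\varphi_n c_i = d_i$ along any $\bv_n$-coset that meets that fiber gives, on the left, a telescoping sum that vanishes whenever $c_i$ has finite support along the coset, and on the right the single non-zero value of $d_i$ on that coset. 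Hence no $c_i$ supported on finitely many $\bv_i$-lines can satisfy $\varphi_n c_i = d_i$. The sparsity hypothesis constrains $d=\sum_j d_j = \varphi_n c$, not the individual summands handed to you by the induction hypothesis, so it does not directly exclude such $d_i$; at minimum you would have to argue that the decomposition of $d$ can always be chosen so that every piece is liftable in your sense, and that is precisely the missing ingredient.
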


\noindent
Together with Theorem \ref{special annihilator configurations} the above theorem yields that any sparse integral configuration with non-trivial annihilators is a sum of finitely many periodic fibers.

Using Theorem \ref{thm: sparse with annihilators} we prove the following theorem.

% First, we need to prove some properties of a particular class of configurations.
% % Let us define these configurations.
% %In the following considerations we denote by $C_m = [ -m, \ldots , m ]^d$ the $d$-dimensional hypercube with side of length $2m$ centered at the origin.

% \begin{corollary}\label{corollary:sparse configuration with annihilators is a sum of finitely many periodic fibers}
%   Let $c$ be a sparse integral configuration and assume that it has a non-trivial annihilator.
%   Then it is a sum of finitely many periodic fibers.
% \end{corollary}

% \begin{proof}
%   By Theorem \ref{special annihilator configurations} the configuration $c$ is annihilated by a product of difference polynomials in pairwise linearly independent directions.
%   Thus, by Theorem \ref{thm:sparse with annihilators is a sum of finitely many periodic fibers} it is a sum of finitely many periodic fibers.
% \end{proof}

% \subsubsection{Main result of the subsection}

\begin{theorem} \label{thm:1d delone with annihilators is periodic}
  Let $c \in \A^{\R}$ be an integral 1-dimensional Delone configuration with a non-trivial integral annihilator.
  Then $c$ is periodic.
\end{theorem}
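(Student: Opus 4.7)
The plan is to combine Theorem \ref{main result 1} with Theorem \ref{thm: sparse with annihilators}, reducing the real configuration $c$ to a family of sparse $\Z^m$-configurations indexed by cosets of a finitely generated subgroup of $\R$. Applying Theorem \ref{main result 1} gives $c$ an annihilator of the form $g=\prod_{i=1}^{m}(X^{v_i}-1)$ with nonzero reals $v_1,\ldots,v_m$ that are pairwise linearly independent over $\Q$. If $m=1$ then $c$ is immediately $v_1$-periodic, so assume $m\geq 2$. Setting $H=\Z v_1+\cdots+\Z v_m\subseteq\R$, the $\Q$-independence of the $v_i$'s makes $H$ a free abelian group of rank $m$; for each $u\in\R$ I would define the $\Z^m$-configuration $c^{(u)}(\vec{n})=c(u+\sum_i n_i v_i)$, observe that the pullback of $g$ along $\vec{n}\mapsto\sum_i n_i v_i$ is $\prod_{i=1}^{m}(Y^{\be_i}-1)$ and therefore annihilates $c^{(u)}$, and verify sparsity of $c^{(u)}$: the cube $C_M$ maps into a real interval of length $2M\sum_i|v_i|$, and uniform discreteness of $\supp(c)$ bounds the number of support points of $c^{(u)}$ in any translate of $C_M$ by a linear function of $M$ with slope independent of $u$.

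Theorem \ref{thm: sparse with annihilators} then yields a decomposition $c^{(u)}=c^{(u)}_1+\cdots+c^{(u)}_m$ in which each $c^{(u)}_i$ is a sum of finitely many periodic $\be_i$-fibers satisfying $(Y^{\be_i}-1)c^{(u)}_i=0$. The extra annihilation relation forces $c^{(u)}_i$ to be constant along each of its fibers, so under the embedding $\vec{n}\mapsto\sum_i n_i v_i$ the summand $c^{(u)}_i$ corresponds in $\R$ to finitely many arithmetic progressions inside the coset $u+H$ of common difference $v_i$, each carrying a single nonzero integer value.

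The main obstacle, and the concluding step, is to use uniform discreteness of $\supp(c)$ to show that a single index $i^*\in\{1,\ldots,m\}$ accounts for every nonzero component, across every coset. Suppose to the contrary that $c^{(u)}_i$ and $c^{(u')}_j$ are both nonzero for some indices $i\neq j$, supplying infinite $v_i$- and $v_j$-progressions $A$ and $B$ in $\R$. An $\be_k$-fiber meets an $\be_i$-fiber in at most one point of $\Z^m$, and the other components together involve only finitely many such fibers, so $c$ takes the constant nonzero value of $c^{(u)}_i$ on all but finitely many points of $A$; the same reasoning applies to $B$. Because $v_i/v_j\notin\Q$, the subgroup $\Z v_i+\Z v_j$ is dense in $\R$, which produces infinitely many pairs $(a,b)\in A\times B$ with $|a-b|$ smaller than the uniform discreteness constant of $\supp(c)$; all but finitely many of these pairs then lie in $\supp(c)\times\supp(c)$, contradicting uniform discreteness. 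Hence a single direction $v_{i^*}$ carries every nonzero fiber, every $c^{(u)}$ is $\be_{i^*}$-periodic, and as $u$ and $\vec{n}$ range over $\R$ and $\Z^m$ we obtain $c(t+v_{i^*})=c(t)$ for all $t\in\R$, so $c$ is periodic.
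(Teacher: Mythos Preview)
Your overall route matches the paper's: pass to $\Z^m$-configurations indexed by cosets, verify sparsity from uniform discreteness, apply Theorem~\ref{thm: sparse with annihilators}, and rule out two distinct directions using that $\Z v_i+\Z v_j$ is dense in $\R$. There is, however, a real gap where you assert that $H=\Z v_1+\cdots+\Z v_m$ is free of rank $m$. Theorem~\ref{main result 1} yields only \emph{pairwise} $\Q$-independence of the $v_i$, not full $\Q$-linear independence; for instance $v_1=1$, $v_2=\sqrt{2}$, $v_3=1+\sqrt{2}$ are pairwise $\Q$-independent but $v_1+v_2-v_3=0$. When the map $\vec{n}\mapsto\sum_i n_iv_i$ has a nontrivial kernel $K$, the configuration $c^{(u)}$ is $K$-periodic, and each point of $\supp(c)$ in the image interval pulls back to a coset of $K$ contributing on the order of $M^{\rank K}$ points to $C_M$; your support count then grows at least like $M^{1+\rank K}$, so $c^{(u)}$ is not sparse and Theorem~\ref{thm: sparse with annihilators} does not apply.

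The repair is exactly what the paper does: choose a $\Z$-basis $b_1,\ldots,b_d$ of the finitely generated torsion-free group $H$ (the paper uses a basis of $\Z[\supp(f)]$, which serves the same purpose), set $c^{(u)}(\vec{n})=c(u+\sum_j n_jb_j)$ for $\vec{n}\in\Z^d$, and write $v_i=\sum_j a_{ij}b_j$ so that the pulled-back annihilator becomes $\prod_i(Y^{\vec{a}_i}-1)$ with $\vec{a}_i\in\Z^d$. Pairwise $\Q$-independence of the $v_i$ translates to pairwise linear independence of the $\vec{a}_i$, injectivity of $\vec{n}\mapsto\sum_j n_jb_j$ restores the linear sparsity bound, and the remainder of your argument (two fibers in linearly independent directions meet in at most one point; the density contradiction) carries over verbatim with $\vec{a}_i$ in place of $\be_i$.
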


\begin{proof}
  Let $f$ be a non-trivial integral annihilator of $c$ and let $d$ be the rank of the additive abelian group $\Z[\supp(f)]$.
  Let $\{b_1,\ldots,b_d\}$ be a (minimal) generator set of this group.
  Note that every element of $\Z[\supp(f)]$ has a unique presentation in the form $i_1b_1+\ldots+i_db_d$ where $i_1,\ldots,i_d\in \Z$, that is, the function $(i_1,\ldots,i_d) \mapsto i_1b_1+\ldots+i_db_d$ is injective.
  This follows from the fundamental theorem of finitely generated abelian groups \cite{abstract-algebra}.
  Define for all $\alpha \in \R$, a $\Z^d$-configuration $c^{(\alpha)} \in \A^{\Z^d}$ such that
  $$
  c^{(\alpha)}(i_1,\ldots,i_d) = c(\alpha + i_1 b_1 + \ldots + i_d b_d).
  $$

  Let us show that $c^{(\alpha)}$ is sparse for all $\alpha$.
  So, consider an arbitrary $c^{(\alpha)}$.
  Since $\supp(c)$ is a Delone set and hence uniformly discrete, there exists 
  %a rational number 
  $\delta >0$ such that each half-open interval
  $$I_k = [\alpha + k \delta, \alpha + (k+1) \delta)$$ contains at most one point of $\supp(c)$ for each $k\in\Z$.
  (Moreover, the intervals $I_k$
  where $k\in\Z$ partition the real line $\R$.)
  Thus, the sets
  $$
  S_k = \{ (i_1,\ldots,i_d) \in \Z^d \mid k \delta \leq i_1b_1 + \ldots + i_db_d < (k+1) \delta \}
  $$
  contain at most one element of $\supp(c^{(\alpha)})$ by the mentioned injectivity of the function $(i_1,\ldots,i_d) \mapsto i_1b_1+\ldots + i_db_d$.
  %Note that the set $S_k$ is an intersection of a closed and an open half-space.
  For $m\in\Z_+$ and $\bt \in \Z^d$, let $N(m,\bt)$ denote the number of sets $S_k$ that intersect the set $C_{m} +\bt$. 
  % It grows linearly with respect to $m$, that is, 
  There exists $a$ such that
  $N(m,\bt) \leq a m$ for all $m \in \Z_+$ and $\bt \in \Z^d$.
  Thus,
  $$
  |\supp(c^{(\alpha)}) \cap (C_m+\bt)| \leq N(m,\bt) \leq a m
  $$
  for all $m\in \Z_+$ and $\bt \in \Z^d$ and hence $c^{(\alpha)}$ is sparse.

By the assumption $c$ has a non-trivial annihilator and hence every $c^{(\alpha)}$ has a non-trivial annihilator.  
By Lemma \ref{lemma2} and Theorem \ref{thm: special annihilator precise formulation} there exist pairwise linearly independent vectors $\bv_1,\ldots,\bv_n$ such that every $c^{(\alpha)}$ is annihilated by the polynomial
$$
(X^{\bv_1}-1) \cdots (X^{\bv_n}-1).
$$
Thus, by Theorem \ref{thm: sparse with annihilators} we have
$$
c^{(\alpha)} = c^{(\alpha)}_1 + \ldots + c^{(\alpha)}_n
$$
where each $c^{(\alpha)}_i$ is a sum of finitely many $\bv_i$-periodic $\bv_i$-fibers.
Denote $\bv_i=(v_{i,1},\ldots,v_{i,d})$ for each $i \in \{1,\ldots,n\}$.

% We show that at most one of the configurations $c^{(\alpha)}_1,\ldots,c^{(\alpha)}_n$ is non-zero, that is, we show that
% $$
% c^{(\alpha)} = c^{(\alpha)}_i
% $$
% for some $i \in  \{1,\ldots,n\}$ and hence
% $c^{(\alpha)}$ is $\bv_i$-periodic.
% Moreover, we show that for every $\alpha$ we have $c^{(\alpha)} = c^{(\alpha)}_i$ for the same $i \in \{1,\ldots,n\}$.
% It follows that every $c^{(\alpha)}$ is $\bv_i$-periodic and hence $c$ is periodic.
% (More precisely, $c$ is $(v_{i,1} b_1 + \ldots + v_{i,d}b_d)$-periodic where $\bv_i=(v_{i,1},\ldots,v_{i,d})$.)

If for some $\alpha$ we have $c_i^{(\alpha)} \neq 0$ and $c_j^{(\alpha)} \neq 0$ for some $i \neq j$, then there exist $\bu_i=(u_{i,1},\ldots,u_{i,d})$ and $\bu_j=(u_{j,1},\ldots,u_{j,d})$ such that
$c^{(\alpha)}_i(\bu_i) \neq 0$ and $c^{(\alpha)}_j(\bu_j) \neq 0$.
Since $c_i^{(\alpha)}$ is $\bv_i$-periodic and $c_j^{(\alpha)}$ is $\bv_j$-periodic, it follows that
$c^{(\alpha)}_i(\bu_i + t \bv_i) \neq 0$ and $c^{(\alpha)}_j(\bu_j + t \bv_j) \neq 0$ for all $t \in \Z$.
Thus, $c(\alpha_i + t(v_{i,1}b_1 + \ldots + v_{i,d}b_d))\neq 0$ and $c(\alpha_j + t(v_{j,1}b_1 + \ldots + v_{j,d}b_d))\neq 0$ for all $t \in \Z$ where
$\alpha_i=\alpha + u_{i,1} + \ldots +u_{i,d}$ 
and 
$\alpha_j=\alpha + u_{j,1} + \ldots +u_{j,d}$.
% Consider the reals 
% $\alpha_i + m(v_{i,1}b_1 + \ldots + v_{i,d}b_d)$ 
% and 
% $\alpha_j + n(v_{j,1}b_1 + \ldots + v_{j,d}b_d)$ 
% with 
% $\alpha_i=\alpha + u_{i,1} + \ldots +u_{i,d}$ 
% and 
% $\alpha_j=\alpha + u_{j,1} + \ldots +u_{j,d}$ 
% for different $m,n \in \N$.
If 
$$
m(v_{i,1}b_1 + \ldots + v_{i,d}b_d) = m'(v_{j,1}b_1 + \ldots + v_{j,d}b_d)
$$
for some $m,m'\in \Z$,
then $mv_{i,1}=m'v_{j,1},\ldots, mv_{i,d}=m'v_{j,d}$ by the uniqueness of the representation by the minimal generator set $\{b_1, \ldots,b_d\}$.
Thus, $m\bv_i =m'\bv_j$ and hence
$m=m'=0$ since $\bv_i$ and $\bv_j$ are linearly independent.
This means that the numbers $ v_{i,1}b_1+\ldots+v_{i,d}b_d$ and $ v_{j,1}b_1+\ldots+v_{j,d}b_d$ are rationally independent.
Consequently, for all $\varepsilon >0$, there exist $m,m' \in \Z$ such that $\alpha_i + m(v_{i,1}b_1 + \ldots + v_{i,d}b_d)$ and $\alpha_j + m'(v_{j,1}b_1 + \ldots + v_{j,d}b_d)$ are within distance $\varepsilon$ from each other.
This is a contradiction with the uniform discreteness of $\supp(c)$.

Thus, for all $\alpha$, we have $c^{(\alpha)} = c_i^{(\alpha)}$ for some $i \in \{1,\ldots ,n\}$.
Similarly as above, if for some $\alpha$ and $\beta$, we have $c^{(\alpha)}=c_i^{(\alpha)}$ and $c^{(\beta)} = c_j^{(\beta)}$ for $i \neq j$, we get again a contradiction with the uniform discreteness of $\supp(c)$.
So, we conclude that for all $\alpha \in \R$, $c^{(\alpha)} = c_i^{(\alpha)}$ for the same $i\in\{1,\ldots,n\}$.
It follows that every $c^{(\alpha)}$ is $\bv_i$-periodic and hence 
% $c$ is periodic.
% (More precisely, 
$c$ is $(v_{i,1} b_1 + \ldots + v_{i,d}b_d)$-periodic.
% where $\bv_i=(v_{i,1},\ldots,v_{i,d})$.
\iffalse
Assume on the contrary that there exist $\alpha \in \R$ and $i,j \in \{1,\ldots,n\}, i \neq j$ such that $c_i^{(\alpha)} \neq 0$ and $c_j^{(\alpha)} \neq 0$.
Let $\bu_i$ and $\bu_j$ be such that $c^{(\alpha)}_i(\bu_i) \neq 0$ and $c^{(\alpha)}_j(\bu_j) \neq 0$.
Since $c_i^{(\alpha)}$ is $\bv_i$-periodic and $c_j^{(\alpha)}$ is $\bv_j$-periodic, it follows that
$c^{(\alpha)}_i(\bu_i + t \bv_i) \neq 0$ and $c^{(\alpha)}_j(\bu_j + t \bv_j) \neq 0$ for all $t \in \Z$.
There exist infinitely many $t$ and $t'$ such that $\bu_i + t \bv_i$ and $\bu_j + t' \bv_j$ are in the same set $S_k$.
Thus, there exist $t$ and $t'$ such that $\bu_i + t \bv_i$ and $\bu_j + t' \bv_j$ are different and in the same set $S_k$.
So, $S_k$ contains two elements of the set $\supp(c^{(\alpha)})$.
A contradiction.

% Let $\bu_i$ and $\bu_j$ be such that $c_i(\bu_i) \neq 0$ and $c_j(\bu_j) \neq 0$.
% There exist $k_i,k_j\in\Z$ such that $\bu_i + k_i\bv_i$ and $\bu_j + k_j\bv_j$ are in the same set $S_k$ for some $k\in\Z$.
% This is a contradiction.

%Similarly, every $c^{(\alpha)}$ is periodic in the same direction, that is, $\bv_i$-periodic.

Let us then show that for all $\alpha$ we have 
$c^{(\alpha)} = c^{(\alpha)}_i$ for the same $i$.
Assume on the contrary that there exist $\alpha,\alpha'$ such that..

Thus, $c$ is periodic.
\fi
\end{proof}

\begin{corollary}
  Let $c \in \A^{\R}$ be a 1-dimensional Delone configuration with a non-trivial annihilator.
  Then $c$ is a Meyer configuration.
\end{corollary}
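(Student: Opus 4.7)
The plan is to chain Theorem \ref{thm:1d delone with annihilators is periodic} with Lemma \ref{lemma:strongly periodic delone sets are meyer}. Once we know that a $1$-dimensional Delone configuration with a non-trivial annihilator is already periodic, the Meyer property follows immediately because in dimension one a single non-zero period vector spans $\R$, so every periodic $\R$-configuration is automatically strongly periodic.

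Concretely, I would first invoke Theorem \ref{thm:1d delone with annihilators is periodic} to produce a non-zero $p\in\R$ with $\tau^{p}(c)=c$. Taking supports, this gives $\supp(c)+p=\supp(c)$, so the Delone set $S=\supp(c)\subseteq\R$ is $p$-periodic. Since $\{p\}$ is a basis of $\R$ over $\R$, $S$ is strongly periodic in the sense of the paper.

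Next I would apply Lemma \ref{lemma:strongly periodic delone sets are meyer} to the strongly periodic Delone set $S$ to conclude that $S$ is a Meyer set. By the definition of a Meyer configuration (an $\R^d$-configuration whose support is a Meyer set), this is exactly the statement that $c$ is a Meyer configuration, which finishes the argument.

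The only real point that needs attention is matching the hypotheses of Theorem \ref{thm:1d delone with annihilators is periodic}, which is phrased for integral $c$ with an integral annihilator, to the slightly looser formulation of the corollary. This is not a substantive obstacle: for integral $c$ the remark following the definition of annihilators lets us replace a complex annihilator by an integral one of the same support, so we may feed the hypothesis of the corollary directly into Theorem \ref{thm:1d delone with annihilators is periodic}. Beyond this bookkeeping, there is no hard step in the proof; the work has all been done in the preceding results.
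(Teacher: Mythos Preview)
Your approach is exactly the paper's: invoke Theorem \ref{thm:1d delone with annihilators is periodic} to get periodicity, observe that in dimension one this is automatically strong periodicity, and then apply Lemma \ref{lemma:strongly periodic delone sets are meyer}. Your remark about the integrality hypothesis is a fair observation---the paper itself simply applies Theorem \ref{thm:1d delone with annihilators is periodic} without comment, so you are being, if anything, more careful than the original.
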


\begin{proof}
  By Theorem \ref{thm:1d delone with annihilators is periodic} the Delone configuration $c$ is periodic and hence by Lemma \ref{lemma:strongly periodic delone sets are meyer} it is a Meyer configuration.
\end{proof}

For $d\geq 2$, there exist Delone configurations with non-trivial annihilators that are not Meyer configurations.
% Thus, the above corollary does not hold in higher dimensions.
For example, consider a Delone set which is periodic but not strongly periodic and not a Meyer set.
Its indicator function is a Delone configuration that has a non-trivial annihilator, and it is not a Meyer configuration.
Thus, the above corollary does not hold in higher dimensions.
% \newpage

\section{Forced periodicity of Delone configurations of finite local complexity} \label{sec: forced periodicity FLC}

In this section we prove a statement of forced periodicity of Delone configurations of finite local complexity.
% and give a 
%sufficient 
% We give a condition for a Delone configuration of finite local complexity to be strongly periodic.
% We prove this more generally for generalized configurations.??
% This result generalizes a statement of forced periodicity in \cite{DLT_invited} concerning configurations.

% First, we need some concepts and definitions.
For a fixed dimension $d$, we denote by $\G_k$ the set of all $k$-dimensional linear subspaces of $\R^d$.
Clearly, if $k > d$, then $\G_k = \emptyset$.
%As usual, we denote by $\bu \cdot \bv$ the scalar product of two vectors $\bu,\bv\in \R^d$.
The \emph{open half space} in direction $\bv$ is the set
$$
H_{\bv} = \{ \bu \in \R^d \mid \bu \cdot \bv  < 0 \}.
$$
% See Figure \ref{fig:open half space} for an illustration.
The \emph{closed half space} in direction $\bv$ is the set $\overline{H}_{\bv} = \{ \bu \in \R^d \mid \bu \cdot \bv \leq 0 \}$.
Note that
$$
\overline{H}_{\bv} \setminus H_{\bv} = \{ \bu \in \R^d \mid \bu \cdot \bv = 0 \} \in \G_{d-1}
$$
is the orthogonal space of $\bv$, and we may denote it by $\langle \bv \rangle ^{\perp}$.
See Figure \ref{fig:open half space} for a geometric illustration of these sets.

A non-empty finite set $F \Subset \R^d$ \emph{has a vertex} in direction $\bv$ if there exists a vector $\bt \in \R^d$ such that $F+ \bt \subseteq \overline{H}_{\bv}$ and $(F+\bt) \cap \langle \bv \rangle^{\perp} = \{\vec{0}\}$.
% If $F$ has no vertex in direction $\bv$, then it \emph{has an edge} in direction $\bv$.
An $\R^d$-polynomial has a vertex in direction $\bv$ if its support has a vertex in direction $\bv$.

% \begin{example}
%   Consider the polynomial
%   $(xy-1)(x-1)=x^2y-xy-x+1$.
%   It has vertices in all other directions except in those that are perpendicular to $(1,0)$ and $(1,1)$.
% \end{example}

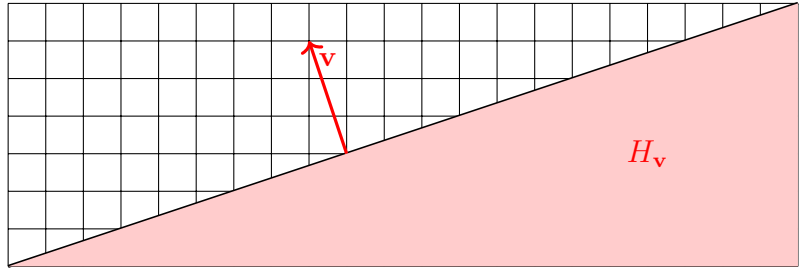
\begin{figure}[ht]
  \centering
   \begin{tikzpicture}[scale=0.5]
     % \draw[] (0,0) grid (15,10);
     % \draw[very thick] (0,0) -- (5*3,5*2);
     % \draw[very thick,->] (6,4) -- (4,7);
     \draw (0,0) grid (21,7);
     \draw[very thick] (0,0) -- (7*3,7);
     \draw[red,very thick,->] (9,3) -- (8,6);
     \node at (8.5,5.5) {\textcolor{red}{\small $\bv$}};
     \fill[red!20,opacity=0.5] (0,0) -- (21,7) -- (21,0) -- cycle;
     \node at (17,3) {\textcolor{red}{$H_{\bv}$}};
   \end{tikzpicture}
%  \includestandalone[width=.6\textwidth]{fig_open_half_space}
  \caption{The open half space $H_{\bv}$ in direction $\bv = (-1,3)$. The black line corresponds to the set $\overline{H}_{\bv} \setminus H_{\bv} = \langle \bv \rangle ^{\perp}$.}
  \label{fig:open half space}
\end{figure}

%
%The \emph{hull} of a Delone set $S \subseteq \R^d$ is the set
%$$
%\X_S = \{ E \subseteq \R^d \mid \Lang{E} \subseteq \Lang{S}, \vec{0} \in E \}.
%$$

For a Delone configuration $c \in \A^{\R^d}$, let us denote
by
$$
\X_{c} = \{ \tau^{\bt}(c) \mid \bt \in \R^d, \tau^{\bt}(c)(\vec{0}) \neq 0 \}
$$
the set of translations of $c$ with non-zero values at the origin.
%\textcolor{red}{
%Note that $c$ may not belong to its hull.}
% Note that if $c_{\vec{0}} = 0$, then $c$ does not belong to its hull.
% Note also that any annihilator of $c$ is also an annihilator of any $e \in \X_c$.

% with respect to a non-zero $a\in\A$ such that $c_{\bu} =a$ to be the set
% $$
% \X_{c,a} = \{ e \in \A^{\R^d} \mid \Lang{e} \subseteq \Lang{c}, e_{\vec{0}} = a \}.
% $$

%\begin{lemma}\label{lemma:hull}
%  Let $S$ be a Delone set.
%  If $\X_S$ is finite, then $S$ is strongly periodic.
%\end{lemma}
%
%\begin{proof}
%  Assume that $\X_S$ is finite.
%  Then the patch-counting function of $S$ is bounded since $N_S(T) \leq |\X_S|$ for all $T$ and hence $S$ is strongly periodic by Theorem \ref{thm:forced periodicity Lagarias}.
%  % Assume then that $S$ is strongly periodic. Thus, $S$ is an ideal crystal meaning that $S = F + \Lambda_S$ for some $F \Subset \R^d$ as noted in Remark \ref{remark:ideal crystals} where $\Lambda_S = \{ \bt \in \R^d \mid S=S+\bt\}$.
%  % Every $E \in \X_S$ is
%\end{proof}

\begin{lemma}\label{lemma:hull}
Let $c$ be a Delone configuration.
If $c$ is not strongly periodic, then $\X_c$ is an infinite set.
\end{lemma}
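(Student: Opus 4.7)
The plan is to prove the contrapositive: if $\X_c$ is finite, then $c$ is strongly periodic. First, I would unpack the definition of $\X_c$. Since $\tau^{\bt}(c)(\vec{0}) = c(-\bt)$, the condition $\tau^{\bt}(c)(\vec{0}) \neq 0$ is equivalent to $-\bt \in S := \supp(c)$. Hence
$$
\X_c = \{ \tau^{-\bs}(c) \mid \bs \in S \}.
$$

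Next, let $\Lambda = \{\bt \in \R^d \mid \tau^{\bt}(c) = c\}$ be the group of periods of $c$, which is an additive subgroup of $\R^d$. For $\bs_1, \bs_2 \in S$, one has $\tau^{-\bs_1}(c) = \tau^{-\bs_2}(c)$ if and only if $\bs_1 - \bs_2 \in \Lambda$; thus the map $\bs \mapsto \tau^{-\bs}(c)$ from $S$ onto $\X_c$ has fibers contained in cosets of $\Lambda$. Assuming $\X_c$ is finite, we can pick representatives $\bs_1, \ldots, \bs_n \in S$, set $F = \{\bs_1, \ldots, \bs_n\}$, and conclude
$$
S \subseteq F + \Lambda.
$$

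The final and key step is to show that if $c$ is \emph{not} strongly periodic then $F + \Lambda$ cannot be relatively dense, contradicting the fact that $S$ is a Delone set. By hypothesis, $\Lambda$ contains no $d$ linearly independent vectors over $\R$, so its $\R$-linear span $V$ is a proper subspace of $\R^d$. Then $F + \Lambda \subseteq F + V$, which is a finite union of affine subspaces of $\R^d$ of dimension at most $d-1$. Projecting orthogonally onto $V^\perp$, the image of $F+V$ is the finite set $\pi(F)$, so balls of arbitrarily large radius in $V^\perp$ (suitably lifted) are disjoint from $F+V$. In particular $F+V$ is not relatively dense, which gives the contradiction.

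The only real obstacle is the last step, but it is essentially a dimension argument: a finite union of proper affine subspaces cannot meet every ball of some fixed radius in $\R^d$. Everything else is bookkeeping about how translations with a chosen basepoint parametrize $\X_c$ modulo the period group.
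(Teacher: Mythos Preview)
Your proof is correct and, in fact, slightly cleaner than the paper's. The paper proves the lemma directly and splits into two cases according to whether the support $S=\supp(c)$ is itself strongly periodic or not: in the first case it finds a period $\bt_1$ of $S$ that is not a period of $c$ and exhibits infinitely many distinct translates $\tau^{k\bt_1}(e)\in\X_c$; in the second case it argues, much as you do, that $\Lambda_S$ spans a proper subspace, so by relative denseness $S$ must meet infinitely many cosets of $\Lambda_S$, yielding infinitely many elements of $\X_c$.

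Your contrapositive argument bypasses this case distinction entirely by working with the period group $\Lambda$ of $c$ rather than of $S$: the identification $\X_c=\{\tau^{-\bs}(c)\mid\bs\in S\}$ together with the observation that fibers of $\bs\mapsto\tau^{-\bs}(c)$ are exactly $\Lambda$-cosets gives $S\subseteq F+\Lambda$ immediately from finiteness of $\X_c$, and the dimension argument on $F+V$ (with $V=\mathrm{span}_\R\Lambda$) finishes uniformly. This buys you a shorter and more transparent proof; the paper's approach, on the other hand, is slightly more explicit in that it actually constructs the infinitely many distinct elements of $\X_c$ rather than arguing by contradiction.
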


\begin{proof}
Assume that $c$ is not strongly periodic.
Let $S$ be the support of $c$.

Assume first that $S$ is strongly periodic.
Thus, $S$ has $d$ linearly independent period vectors $\bt_1,\ldots,\bt_d$.
Since $c$ is not strongly periodic, one of these vectors, say $\bt_1$, is not a period of $c$.
Moreover, no multiple of $\bt_1$ is a period of $c$.
Let $e$ be a translation of $c$ such that $e \in \X_c$, that is, $e_{\vec{0}} \neq 0$.
Clearly, any period of $c$ is also a period of $e$ and vice versa.
Thus, for all $k \in \Z$ the Delone configurations $\tau^{k \bt_1}(e)$ are distinct.
Since $\bt_1$ is a period of the support of $e$ and $e_{\vec{0}} \neq 0$,  also $\tau^{k \bt_1}(e)(\vec{0}) \neq 0$ for all $k \in \Z$.
Thus, $\tau^{k \bt_1}(e) \in \X_c$ for all $k \in \Z$ and hence $X_c$ is an infinite set.

Assume then that $S$ is not strongly periodic.
It follows that the set
$\Lambda_S = \{\bt \in \R^d \mid S+\bt=S\}$ of periods of $S$ spans a vector space of dimension less than $d$.
By the relative denseness of $S$ there are infinitely many vectors $\bt_1,\bt_2,\ldots \in S$ such that the cosets $ \Lambda_S + \bt_i$ are distinct.
% Let $\bv \in \R^d$ be such that $e = \tau^{\bv}(c) \in \X_c$.
Let us define for every $i \in \Z_+$, a configuration $c_i =  \tau^{-\bt_i}(c)$.
We have $c_i(\vec{0}) = c(\bt_i) \neq 0$ since $\bt_i \in \supp(c)$.
So, we have $c_i \in \X_c$.
Let us show that for each $i\neq j$, also $c_i\neq c_j$ and hence $\X_c$ is an infinite set.
Assume on the contrary that $c_i=c_j$ for some $i \neq j$.
Then $\tau^{-\bt_i}(c) = \tau^{-\bt_j}(c)$ and hence $\tau^{\bt_j-\bt_i}(c) = c$.
So, $\bt_j-\bt_i$ is a period of $c$ and hence it is also a period of $S$.
Thus, $\bt_j-\bt_i \in \Lambda_S$ and hence $\bt_j \in \Lambda_S + \bt_i$.
Similarly, we conclude that $\bt_i \in \Lambda_S + \bt_j$ and hence $\Lambda_S + \bt_i = \Lambda_S + \bt_j$.
This is a contradiction.
% We conclude that
% Then for any $i$
% % $\bt \in S \setminus \Lambda_S$ 
% we have $e_i=\tau^{-\bt_i - \bv}(e) = \tau^{-\bt}(c)$ and hence.
% Hence, $\X_c$ is an infinite set also in this case.
% Let $S$ be the support of $c$.
% Assume on the contrary that $c$ is not strongly periodic.
% Let $\Lambda_S = \{\bt \in \R^d \mid S+\bt=S\}$ be the set of periods of $S$. Since $S$ is not strongly periodic, the dimension of $\Lambda_S$ is strictly less than $d$.
% By the uniform discreteness and relative denseness of $S$ one can find a sequence $(\bv_i)_{i \in \N}$ of distinct vectors $\bv_i \in \R^d \setminus \Lambda_S$ such that $\tau^{\bv_i}(c)_{\vec{0}} = a$ for all $i\in \N$.
% These are all in $\X_c$ and hence $X_c$ is infinite.
% A contradiction.
\end{proof}

In the proof of our theorem of forced periodicity we need the following technical lemma.
We denote by $\mathbf{S}^1 = \{ \vec{v} \in \R^d \mid ||\vec{v}|| = 1 \}$ the $d$-dimensional unit sphere and use the fact that it is a compact subset of $\R^d$ under the usual Euclidean metric which implies that every sequence of elements of $\mathbf{S}^1$ has a converging subsequence.

% \begin{lemma}
% Assume that there exist $T \in \R_+$, $\bv \in \mathbf{S}^1$, and a non-empty finite set $F \Subset \R^d$ such that
% $$
% F \setminus \{\vec{0}\} \subseteq B_{T}^{\circ}(-T \bv).
% $$
% Then there exist $\delta>0$ and $R_0$ such that....
% \end{lemma}

\begin{lemma}\label{lemma:forced2}
Let $(\bv_n')_{n\geq 1}$ be a converging sequence of vectors $\bv_1',\bv_2',\ldots \in \mathbf{S}^1$, and let $\bv$ be the limit of this sequence.
Let $T_0 \in \R_+$ and $\bu\in \R^d$ be such that
$$
\bu \in B_{T_0}^{\circ}(-T_0 \bv).
$$
There exists $N_0 \in \Z_+$ such that
$$
\bu \in B_{T}^{\circ}(-T \bv'_{n})
$$
for all $T \geq T_0$ whenever $n \geq N_0$.
  % Assume that $\mathcal{J}$ is a countably infinite subset of $\R_+$ and $(\bv_R')_{R \in \mathcal{J}}$ is a converging sequence where $\bv_R' \in \mathbf{S}^1$ for each $R \in \mathcal{J}$.
  % Let $F \Subset \R^d$.
  % Assume that there exists $T \in \R_+$ such that
  % $$
  % F \setminus \{\vec{0}\} \subseteq B_{T}^{\circ}(-T \bv)
  % $$
  % where $\bv$ is the limit of the sequence $(\bv_R')_{R \in \mathcal{J}}$.
  % Then there exists $R_0 \in \mathcal{J}$ with $R_0 \geq T$ such that
  % $$
  % F \setminus \{\vec{0}\} \subseteq B_{R}^{\circ}(-R \bv'_{R})
  % $$
  % for all $R \geq R_0$.
\end{lemma}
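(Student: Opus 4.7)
The plan is to translate the geometric ball-containment condition into an algebraic inequality involving the scalar product, and then exploit continuity of the scalar product together with the convergence $\bv_n' \to \bv$.

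Specifically, since $\|\bv_n'\| = 1$, we have
\[
\bu \in B_T^\circ(-T\bv_n') \iff \|\bu + T\bv_n'\|^2 < T^2 \iff 2T(\bu \cdot \bv_n') + \|\bu\|^2 < 0 \iff \bu \cdot \bv_n' < -\frac{\|\bu\|^2}{2T}.
\]
An identical computation shows that the hypothesis $\bu \in B_{T_0}^\circ(-T_0\bv)$ is equivalent to $\bu \cdot \bv < -\|\bu\|^2/(2T_0)$. Note that $\bu \neq \vec{0}$ (else we would need $0 < 0$), so $\|\bu\| > 0$ and every division below is well defined.

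The key observation is that among $T \geq T_0$, the inequality $\bu \cdot \bv_n' < -\|\bu\|^2/(2T)$ is strictest at $T = T_0$, because $-\|\bu\|^2/(2T)$ is an increasing function of $T$. Therefore it suffices to establish that for all $n$ sufficiently large,
\[
\bu \cdot \bv_n' < -\frac{\|\bu\|^2}{2T_0};
\]
then for every $T \geq T_0$ we automatically get $\bu \cdot \bv_n' < -\|\bu\|^2/(2T_0) \leq -\|\bu\|^2/(2T)$, hence $\bu \in B_T^\circ(-T\bv_n')$.

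To produce such an $N_0$, set $\varepsilon = -\|\bu\|^2/(2T_0) - \bu \cdot \bv$; this is strictly positive by the hypothesis. By Cauchy--Schwarz, $|\bu \cdot (\bv_n' - \bv)| \leq \|\bu\| \cdot \|\bv_n' - \bv\|$, and since $\bv_n' \to \bv$ we may choose $N_0$ so large that $\|\bv_n' - \bv\| < \varepsilon / \|\bu\|$ for all $n \geq N_0$. Then $\bu \cdot \bv_n' < \bu \cdot \bv + \varepsilon = -\|\bu\|^2/(2T_0)$, which completes the argument. There is no real obstacle here; the only subtlety is noticing that monotonicity in $T$ lets one reduce the uniform statement (over all $T \geq T_0$) to the single case $T = T_0$.
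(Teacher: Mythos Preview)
Your proof is correct. Both your argument and the paper's hinge on the same reduction: the family of balls $B_T^\circ(-T\bv_n')$ is nested increasing in $T$ (they all pass through the origin with the same inward normal $\bv_n'$), so it suffices to verify the containment at $T=T_0$, after which convergence $\bv_n'\to\bv$ finishes the job.

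The only difference is cosmetic. The paper works directly with distances: it sets $\varepsilon = 1 - d(-T_0\bv,\bu)/T_0$, takes $N_0$ so that $d(\bv_n',\bv)<\varepsilon$, and applies the triangle inequality to get $d(-T_0\bv_n',\bu) < T_0\varepsilon + d(-T_0\bv,\bu) = T_0$, then invokes the nesting $B_{T_0}^\circ(-T_0\bv_n')\subseteq B_T^\circ(-T\bv_n')$. You instead expand $\|\bu+T\bv_n'\|^2<T^2$ into the scalar-product inequality $\bu\cdot\bv_n' < -\|\bu\|^2/(2T)$, read off the monotonicity in $T$ from this formula, and close with Cauchy--Schwarz. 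Your version makes the monotonicity in $T$ completely explicit, while the paper's is slightly more geometric; neither has any real advantage over the other.
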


\begin{proof}
Let $\varepsilon= 1- \frac{d(-T_0 \bv, \bu)}{T_0}$.
This is a positive real number by the assumption $\bu \in B_{T_0}^{\circ}(-T_0 \bv)$.
Since $\bv = \lim_{n \to \infty} \bv_n'$, there exists $N_0$ such that
  $d(\bv_n',\bv) < \varepsilon$ for all $n \geq N_0$.  
By the triangle inequality we have
$$
  d(-T_0 \bv_{n}', \bu) \leq
  d(-T_0 \bv_{n}', -T_0 \bv) + d(-T_0 \bv, \bu)
  <
  T_0 \cdot \varepsilon + d(-T_0 \bv, \bu)
  = T_0
$$  
for all $n \geq N_0$.
Thus, $\bu \in B_{T_0}^{\circ}(-T_0 \bv_{n}')$ and hence
$\bu \in B_T^{\circ}(-T\bv_n')$ for all $T \geq T_0$ and $n \geq N_0$ since $B_{T_0}^{\circ}(-T_0 \bv_{n}') \subseteq B_{T}^{\circ}(-T \bv_{n}')$ for all $ T \geq T_0$.
\end{proof}

\noindent
% We use the fact that if for a Delone configuration of finite local complexity $c$, the set $\X_c$ is infinite, then it contains for arbitrary diameter, distinct elements that agree on a region around the origin with this diameter.
% %Let us state and prove this as a lemma.
% More precisely, we use the following lemma.
The following lemma is also needed.

\begin{lemma}\label{lemma: FLC arbitrarily large regions}
Let $c \in \A^{\R^d}$ be a Delone configuration of finite local complexity and assume that $\X_c$ is an infinite set.
%Then for all $T$ there exist $e,e' \in \X_c$ such that
%%%$$
Then there exists an unbounded increasing sequence $R_1,R_2,R_3,\ldots$ of positive real numbers
% $0<R_1 < R_2 < R_3 < \ldots$ 
such that for each $n \in \Z_+$ there exist $e,e' \in \X_c$ such that
$$
e \restriction _{B_{R_n}^{\circ}} = e' \restriction _{ B_{R_n}^{\circ}}
$$
but
$$
e \restriction _{B_{R_n}} \neq e' \restriction _{ B_{R_n}}.
$$
\end{lemma}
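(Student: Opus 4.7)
My plan is to introduce the counting function $N \colon \R_+ \to \Z_+$ defined by $N(R) = |\{e|_{B_R} : e \in \X_c\}|$, which tallies the distinct restrictions to the closed ball $B_R$ that occur among elements of $\X_c$. Since $B_R^{\circ} = \bigcup_{R' < R} B_{R'}$, the existence of $e, e' \in \X_c$ with $e|_{B_R^{\circ}} = e'|_{B_R^{\circ}}$ and $e|_{B_R} \neq e'|_{B_R}$ is equivalent to saying that $N$ has a strict left-jump at $R$, i.e., $\lim_{R' \to R^-} N(R') < N(R)$. So the task reduces to producing infinitely many left-jumps of $N$ going to infinity.

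I would first verify three properties of $N$. (i) $N(R) < \infty$ for every $R$: each $e \in \X_c$ equals $\tau^{\bt}(c)$ for some $\bt \in \supp(c)$, and its restriction to $B_R$ is determined by the $B_R$-patch of $\supp(c)$ anchored at $\bt$ together with the values $c$ takes on that patch. Finite local complexity yields finitely many such patches, and $\A$ is finite, so only finitely many restrictions arise. (ii) $N(R) \to \infty$ as $R \to \infty$: otherwise some bound $M$ would hold for all $R$; picking $M+1$ distinct elements of $\X_c$ and then a bounded ball large enough to contain witnesses to all the pairwise differences would force $N$ to exceed $M$ there, a contradiction.

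The key step is (iii), right-continuity of $N$. Suppose $e, e' \in \X_c$ agree on $B_R$. Because $\supp(e) \cup \supp(e')$ is a union of two Delone sets and hence locally finite, its intersection with the compact annulus $\{\bu : R \leq ||\bu|| \leq R+1\}$ is finite, so there is a positive gap $\epsilon_0$ between $R$ and the next norm value in $\supp(e) \cup \supp(e')$ strictly exceeding $R$. On the open shell $\{\bu : R < ||\bu|| < R + \epsilon_0\}$ both $e$ and $e'$ vanish, so they agree on $B_{R + \epsilon}$ for every $\epsilon < \epsilon_0$, which gives right-continuity at $R$.

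Combining (i)--(iii), $N$ is a non-decreasing, integer-valued, right-continuous function tending to infinity; hence every discontinuity of $N$ is a strict left-jump, and since each bounded interval contains only finitely many of them, they form an unbounded increasing sequence $R_1 < R_2 < \cdots$. At each $R_n$ I still need to exhibit an actual pair $e, e' \in \X_c$ achieving the required agreement/disagreement, and this follows by a finite pigeonhole: otherwise each pair of inequivalent classes at radius $R_n$ could be separated at some individual $R' < R_n$, and the maximum of these finitely many $R'$'s would force $N(R') \geq N(R_n)$, contradicting the left-jump. The main obstacle is step (iii): it is precisely where the Delone/locally finite hypothesis is used, and Example \ref{example:1D_annihilator_non-periodic} shows that without it the analogous statement can fail; the remaining steps are essentially formal consequences once right-continuity is in place.
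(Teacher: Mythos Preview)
Your argument is correct and more explicit than the paper's. The paper proceeds by a brief contradiction: it observes that there are only finitely many restrictions $e|_{B_T}$ for each $T$ (by FLC and finiteness of $\A$), then asserts that if the conclusion fails there is $R_0$ such that $e|_{B_{R_0}}=e'|_{B_{R_0}}$ forces $e=e'$, whence $\X_c$ is finite. The passage from ``no arbitrarily large separating radii'' to ``agreement on a fixed ball propagates to all of $\R^d$'' is not spelled out in the paper and in fact requires exactly your right-continuity observation together with an open--closed (or iteration) argument. Your counting-function approach makes this mechanism visible and produces the $R_n$ constructively as the left-jump points of $N$; the paper hides the same content inside a two-line contradiction.

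One small point worth tightening in step (iii): the gap $\epsilon_0$ you extract depends on the particular pair $(e,e')$, whereas right-continuity of $N$ requires a uniform $\epsilon$ working for all pairs simultaneously. The cleanest fix uses FLC directly: every $e\in\X_c$ has support $S-\bs$ for some $\bs\in S=\supp(c)$, so every norm of a support point of any $e\in\X_c$ lies in the locally finite set $\{\|\bv\|:\bv\in S-S\}$, giving a uniform gap above $R$. Alternatively, the same finite pigeonhole you already invoke at the end handles it: since $N$ is integer-valued and non-decreasing, $N(R+\epsilon)$ stabilises to some $L$ for all small $\epsilon>0$; if $L>N(R)$, then among finitely many representatives of those $L$ classes two must agree on $B_R$, and your pairwise $\epsilon_0$ for that specific pair forces them into the same class at some $R+\epsilon$, contradicting $N(R+\epsilon)=L$. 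Either repair is routine.
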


\begin{proof}
Since $S = \supp(c)$ is a Delone set of finite local complexity, $N_S(T)$ is finite for all $T$.
For any $e \in \X_c$, the set $\supp(e) \cap B_T$ is a $T$-patch of $S=\supp(c)$.
Hence, there exist only finitely many different functions $e \restriction_{ B_T}$ for any $T$ because $\A$ is a finite set.
% Assume on the contrary that there does not exist such sequence as in the statement.
% This implies that there exists that the patch-counting function of $S$ is finite and hence...
% If there did not exist such sequence as in the statement, then $\X_c$ would be finite.

Assume on the contrary that the claim does not hold.
Then there exists $R_0$ such that  if
% for all $T \geq R_0$ and 
$e,e' \in \X_c$ with $e \restriction _{B_{R_0}} = e' \restriction _{ B_{R_0}}$, then $e=e'$.
It follows that $\X_c$ is finite 
since there are only finitely many functions $e \restriction_{ B_{R_0}}$.
% due to $S$ being a Delone set of finite local complexity.
\end{proof}

%\begin{proof}
%Since $S = \supp(c)$ is a Delone set of finite local complexity, $N_S(T)$ is finite for all $T$.
%For any $e \in \X_c$ the set $\supp(e) \cap B_T$ is a $T$-patch of $S=\supp(c)$.
%Since $S$ is an FLC, it has only finitely many distinct $T$-patches for any $T$.
%Let us construct the sequence as follows:
%\begin{itemize}
%\item Let $R_1$ be the smallest $T$ such that
%\end{itemize}
%\end{proof}

\noindent
Now, we are ready to state and prove the main result of the section.

\begin{theorem} \label{thm: forced periodicity Delone FLC}
Let $c \in \A^{\R^d}$ be a Delone configuration of finite local complexity
and assume that for all non-zero $\bw$ it has an annihilator that has a vertex in direction $\bw$.
Then $c$ is strongly periodic.
\end{theorem}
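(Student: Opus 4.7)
The plan is to argue by contradiction. Suppose $c$ is not strongly periodic. Then Lemma \ref{lemma:hull} says that $\X_c$ is infinite, and Lemma \ref{lemma: FLC arbitrarily large regions} produces an unbounded increasing sequence $R_1<R_2<\cdots$ together with pairs $e_n,e_n' \in \X_c$ that agree on $B_{R_n}^{\circ}$ but whose restrictions to $B_{R_n}$ differ. Any point $\bu_n$ where they differ must lie on the boundary $\partial B_{R_n}$, so $\|\bu_n\| = R_n$ and $e_n(\bu_n) \neq e_n'(\bu_n)$. The unit vectors $\bv_n' = \bu_n/R_n$ live in the compact sphere $\mathbf{S}^1$, so after passing to a subsequence I may assume $\bv_n' \to \bv$ for some $\bv \in \mathbf{S}^1$.

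By hypothesis $c$ has an annihilator with a vertex in direction $-\bv$. Multiplying by a suitable monomial translates the support of this annihilator and preserves annihilation (because convolution commutes with translation), so I may assume I have an annihilator $f$ of $c$ with $\supp(f) \subseteq \overline{H}_{-\bv}$ and $\supp(f) \cap \langle\bv\rangle^{\perp} = \{\vec{0}\}$. Thus $f(\vec{0}) \neq 0$ and every $\bb \in \supp(f) \setminus \{\vec{0}\}$ satisfies $\bv \cdot \bb > 0$. Since $f$ annihilates every translate of $c$, in particular both $e_n$ and $e_n'$, evaluating the annihilation relation at $\bu_n$ gives
\[
  f(\vec{0})\,e_n(\bu_n) \;=\; -\sum_{\bb \in \supp(f)\setminus\{\vec{0}\}} f(\bb)\,e_n(\bu_n-\bb),
\]
and similarly for $e_n'$.

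The heart of the argument is then a geometric claim: for all sufficiently large $n$ and every $\bb \in \supp(f)\setminus\{\vec{0}\}$ the point $\bu_n - \bb = R_n\bv_n' - \bb$ lies in $B_{R_n}^{\circ}$, equivalently $-\bb \in B_{R_n}^{\circ}(-R_n\bv_n')$. A direct calculation shows that whenever $T_0 > \|\bb\|^{2}/(2\,\bv\cdot\bb)$ one has $-\bb \in B_{T_0}^{\circ}(-T_0\bv)$; the strict positivity $\bv\cdot\bb > 0$ supplied by the vertex condition is exactly what makes this step work. Taking one $T_0$ large enough to serve the finitely many vectors in $\supp(f)\setminus\{\vec{0}\}$ simultaneously and applying Lemma \ref{lemma:forced2}, I obtain $N_0$ such that for every $n \geq N_0$, every $T \geq T_0$, and every relevant $\bb$ one has $-\bb \in B_T^{\circ}(-T\bv_n')$. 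Because $R_n \to \infty$, both $n \geq N_0$ and $R_n \geq T_0$ hold eventually, which proves the geometric claim.

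For such $n$, every term on the right-hand side of the displayed recurrence takes the same value for $e_n$ as for $e_n'$ (they agree on $B_{R_n}^{\circ}$), so $e_n(\bu_n) = e_n'(\bu_n)$, contradicting the choice of $\bu_n$. The most delicate step is the geometric estimate in the third paragraph: coordinating the vertex direction $-\bv$ of $f$, the asymptotic direction $\bv$ of the witness points $\bu_n$, and the radius $R_n$ of the agreement ball so that Lemma \ref{lemma:forced2} applies uniformly across the finitely many support vectors of $f$; everything else is bookkeeping with the convolution identity and standard use of the FLC hypothesis.
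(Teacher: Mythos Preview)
Your proof is correct and follows essentially the same route as the paper: contradiction via Lemmas \ref{lemma:hull} and \ref{lemma: FLC arbitrarily large regions}, extraction of a limit direction $\bv\in\mathbf{S}^1$ for the normalized disagreement points, choice of an annihilator with a vertex in direction $-\bv$, and then Lemma \ref{lemma:forced2} to force $\bu_n-\supp(f)\setminus\{\vec{0}\}\subseteq B_{R_n}^{\circ}$ for large $n$. Your explicit bound $T_0>\|\bb\|^{2}/(2\,\bv\cdot\bb)$ is a nice concrete justification of what the paper simply asserts, but otherwise the argument is the same.
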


\begin{proof}
Assume on the contrary that $c$ is not strongly periodic.
Thus, by Lemma \ref{lemma:hull} the set $\X_c$ is infinite.
% and hence there exist arbitrarily large regions where two distinct elements of $\X_c$ agree since $c$ a Delone configuration of finite local complexity.

Consequently, by Lemma \ref{lemma: FLC arbitrarily large regions} there exists
an unbounded sequence $0<R_1<R_2<R_3< \ldots$
% a countably infinite set $\mathcal{I} \subseteq \R_+$ such that for all $R \in \mathcal{I}$ 
such that for all $n\in\Z_+$
there exist a vector $\bv_{n} \in \R^d$ with $||\bv_n|| = R_n$ and 
%Delone configurations 
$e,e' \in \X_c$ such that
$$
e \restriction _{ B_{R_n}^{\circ}} = e' \restriction_{ B_{R_n}^{\circ}}
$$
but
$$ 
e(\bv_{n}) \neq e'(\bv_{n}).
$$
  % (Note that there may be several vectors $\bv_R$, but we choose one of them.)
  
  Consider the sequence $(\bv'_n)_{n \in \Z_+}$ where $\bv_n = R_n \bv'_n$ and $\bv'_{n} \in \mathbf{S}^1$.
  % = \{ \vec{v} \in \R^d \mid ||\vec{v}|| = 1 \}$.
  By compactness of
  $\mathbf{S}^1$
  we may assume that this sequence converges since we can replace it by a converging subsequence if necessary. 
  % is a compact subset of the Euclidean space $\R^d$ under the usual metric, this sequence has a converging subsequence, that is, there exists an infinite subset $\mathcal{J}\subseteq \mathcal{I}$ such that the subsequence $(\bv'_R)_{R \in \mathcal{J}}$ converges.
  Let $\bv \in \mathbf{S}^1$ be the limit of this sequence. 
  
By the assumption, $c$ has an annihilator $f$ such that it has a vertex in direction $-\bv$. 
Without loss of generality we may assume that $f$ is multiplied by a suitable monomial $X^{\bu}$ such that $\vec{0} \in \supp(f)$ and $\supp(f) \setminus \{\vec{0}\} \subseteq H_{-\bv}$, {\it i.e.}, $-\supp(f) \setminus \{\vec{0}\} \subseteq H_{\bv}$.
Moreover, we may assume that
%$$
%f = \sum_{\bv \in \supp(f)} f_{\bv}X^{\bv}
%$$
%with 
$f_{\vec{0}}=1$.
  Let $T_0$ be such that $-\supp(f) \subseteq B_{T_0}(-T_0 \bv)$ and
  $-\supp(f) \setminus \{\vec{0}\} \subseteq B_{T_0}^{\circ}(-T_0 \bv)$.
  % By Lemma \ref{lemma:forced1} such $T_0$ exists.

  Let then $N_0 \in \Z_+$ be such that 
  $$
  -\supp(f) \setminus \{\vec{0}\} \subseteq 
  % B_{T}^{\circ}(-\bv_{n}) = 
  B_{T}^{\circ}(-T \bv'_{n})
  $$
  for all $T \geq T_0$ and $n \geq N_0$.
  % for all $i \geq N_0$.
  By Lemma \ref{lemma:forced2} such $N_0$ exists since $\supp(f) \setminus \{\vec{0}\}$ is a finite set.
  See Figure \ref{fig:proof} for an illustration.

  Let $n \geq N_0$ be such that $R_n \geq T_0$.
  So, we have
  $$
  -\supp(f) \setminus \{\vec{0}\} \subseteq 
  B_{R_n}^{\circ}(-R_n\bv_{n}') = B_{R_n}^{\circ}(-\bv_{n}).
  $$
  Now, consider two distinct $e,e' \in \X_c$ such that 
  % $$\supp(e) \cap B_{R_n}^{\circ} = \supp(e') \cap B_{R_n}^{\circ}$$ 
  $$
e \restriction _{ B_{R_n}^{\circ}} = e' \restriction_{ B_{R_n}^{\circ}}
$$
  but $e(\bv_{n}) \neq e'(\bv_{n})$.
  % where $i \geq i_0$ is such that $R_i \geq R_0$.
  %Thus,
  %$$
  %(\supp(e)-\bv_{R_0}) \cap B_{R_0}^{\circ}(-\bv_{R_0}) = (\supp(e')-\bv_{R_0}) \cap B_{R_0}^{\circ}(-\bv_{R_0}).
  %$$
  Since $-\supp(f) \setminus \{\vec{0}\} \subseteq B_{R_n}^{\circ}(- \bv_{n})$ and hence $\bv_{n} -\supp(f) \setminus \{\vec{0}\} \subseteq B_{R_n}^{\circ}$, it follows that 
$$
e \restriction _{\bv_{n} -\supp(f) \setminus \{\vec{0}\} } = e' \restriction _{\bv_{n} -\supp(f) \setminus \{\vec{0}\}}.
$$
We have $fe=0$ and $fe'=0$ and hence
  $$
  e(\bv_{n}) = -\sum_{\bu \in \supp(f) \setminus \{\vec{0}\}} f_{\bu} e(\bv_{n}-\bu)
  =-\sum_{\bu \in \supp(f) \setminus \{\vec{0}\}} f_{\bu} e'(\bv_{n}-\bu)
  = e'(\bv_{n}).
  $$
  This is a contradiction.
\end{proof}

\begin{figure}[ht]
  \centering
   \begin{tikzpicture}[scale=0.4]
     % \draw[] (0,0) grid (15,10);
     % \draw[very thick] (0,0) -- (5*3,5*2);
     % \draw[very thick,->] (6,4) -- (4,7);
     \draw (-0.5,-17.5) grid (24.5,7.5);
     \draw[very thick] (-0.5,-1/6) -- (7*3+3/2.5,7.5);
     \draw[red,thick,->] (9,3) -- (9-1/4,3+3/4);
     \node at (8.5,4) {\textcolor{red}{\small $\bv$}};
     \fill[red!20,opacity=0.5] (-0.5,-1/6) -- (22.5,7.5) -- (24.5,7.5) -- (24.5,-17.5) -- (-0.5,-17.5) -- (-0.5,-1/6);
     \node at (19,3) {\textcolor{red}{$H_{\bv}$}};
     \draw[very thick,green,fill=green!20, opacity=0.5](11,-3) circle (2*3.16227766);
     \draw[very thick,purple,fill=purple!20, opacity=0.5](9+1.5*2.828427,3-1.5*5.656854) circle (1.5*2*3.16227766);
     % \draw[fill=black](10,0) circle (1 pt) node [above] {\tiny $-R_0 \bv_{R_0}$};
     \draw[fill=green](11,-3) circle (2pt);
     \draw[fill=purple](9+1.5*2.828427,3-1.5*5.656854) circle (2pt);
     \draw[thick,blue] (9,3) -- (13,2) -- (12,-1) --(6.5,0) -- (9,3);
     \draw[green] (9,3) -- (11,-3);
     \draw[purple] (9,3) -- (9+1.5*2.828427,3-1.5*5.656854);
     \fill[blue!20,opacity=0.5] (9,3) -- (13,2) -- (12,-1) --(6.5,0) -- (9,3);
     \node at (10.1,1.3) {\textcolor{blue}{\tiny $-\supp(f)$}};
     \draw[fill=black](9,3) circle (3pt);
   \end{tikzpicture}
%  \includestandalone[width=.6\textwidth]{fig_proof}
  \caption{Illustration of the proof of Theorem \ref{thm: forced periodicity Delone FLC}. The smaller ball is the ball $B_{T_0}(-T_0 \bv)$ and the bigger ball is the ball $B_{T}( -T \bv'_{n})$ for some $T \geq T_0$ and $n \geq N_0$.}
  \label{fig:proof}
\end{figure}
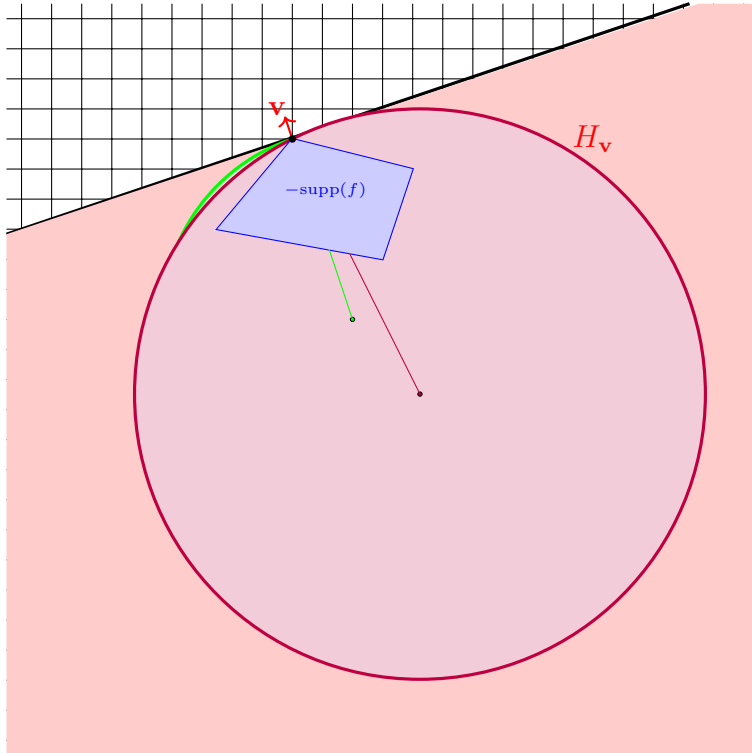

Theorem \ref{main result 1} together with the above theorem gives the following corollary.
The proof of the corollary resembles the proof of Lemma 3 in \cite{DLT_invited}.

\begin{corollary}\label{corollary: forced periodicity}
  Let $c$ be an integral Delone configuration of finite local complexity.
  Assume that for all $V \in \G_{d-1}$ it has a non-trivial annihilator $f$ such that $\supp(f) \cap V = \{\vec{0}\}$.
  Then $c$ is strongly periodic.
\end{corollary}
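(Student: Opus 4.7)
The plan is to show that for every non-zero direction $\bw \in \R^d$ the configuration $c$ admits a specific annihilator with a vertex in direction $\bw$, and then to invoke Theorem \ref{thm: forced periodicity Delone FLC}. The main idea is to upgrade the annihilator handed out by the hypothesis --- which only controls how $\supp(f)$ intersects the hyperplane $\langle \bw \rangle^{\perp}$ --- into a product of difference $\R^d$-polynomials whose support is a combinatorial parallelotope, and then to translate by a distinguished vertex of that parallelotope.

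Fix an arbitrary non-zero $\bw \in \R^d$ and set $V = \langle \bw \rangle^{\perp} \in \G_{d-1}$. By hypothesis $c$ has a non-trivial annihilator $f$ with $\supp(f) \cap V = \{\vec{0}\}$; by the integrality remark recalled earlier in the paper we may assume $f \in \Z[X^{\pm 1}]$ without changing its support. Since every $\bu \in \supp(f) \setminus \{\vec{0}\}$ avoids $V$, we have $\bu \cdot \bw \neq 0$ for all such $\bu$. Applying Theorem \ref{main result 1} with the distinguished point $\vec{0} \in \supp(f)$ then yields an annihilator
$$
g = (X^{\bv_1}-1) \cdots (X^{\bv_m}-1)
$$
of $c$ in which each $\bv_i$ is a non-zero rational multiple of some $\bu_i \in \supp(f) \setminus \{\vec{0}\}$, so that $\bv_i \cdot \bw \neq 0$ for every $i$.

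The next step is to exploit the explicit description
$$
\supp(g) = \Bigl\{ \sum_{i \in I} \bv_i \;\Big|\; I \subseteq \{1, \ldots, m\} \Bigr\}
$$
of the support of $g$. Because each $\bv_i \cdot \bw$ is non-zero, the linear functional $\bp \mapsto \bp \cdot \bw$ attains a \emph{unique} maximum on $\supp(g)$ at $\bq = \sum_{i \,:\, \bv_i \cdot \bw > 0} \bv_i$, and every other $\bp \in \supp(g)$ satisfies $(\bp - \bq) \cdot \bw < 0$. Multiplying $g$ by the monomial $X^{-\bq}$ preserves annihilation and produces an annihilator $g' = X^{-\bq} g$ of $c$ whose support is $\supp(g) - \bq$, contained in $\overline{H}_{\bw}$ and meeting $\langle \bw \rangle^{\perp}$ only at $\vec{0}$. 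Hence $g'$ has a vertex in direction $\bw$.

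Since $\bw$ was arbitrary, the hypothesis of Theorem \ref{thm: forced periodicity Delone FLC} is met and $c$ must be strongly periodic. I do not anticipate a substantive obstacle, as all heavy machinery is in place; the only point that deserves care is verifying that the unique-maximizer argument uses exactly the non-orthogonality $\bv_i \cdot \bw \neq 0$ that the hypothesis $\supp(f) \cap V = \{\vec{0}\}$, combined with the parallelism clause in Theorem \ref{main result 1}, guarantees.
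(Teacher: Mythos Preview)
Your proposal is correct and follows essentially the same route as the paper: pass from the hypothesised annihilator to a product of difference $\R^d$-polynomials via Theorem \ref{main result 1}, observe that none of the resulting directions lie in $\langle \bw \rangle^{\perp}$, arrange for a vertex in direction $\bw$, and invoke Theorem \ref{thm: forced periodicity Delone FLC}. The only cosmetic difference is in how the vertex is produced: the paper flips the sign of each factor (using $(X^{\bv}-1)=-X^{\bv}(X^{-\bv}-1)$) so that every $\bv_i$ already lies in $H_{\bw}$ and $\vec{0}$ becomes the vertex, whereas you keep the $\bv_i$ and translate by the unique maximiser $\bq$ of $\bp \mapsto \bp\cdot\bw$; these are equivalent manoeuvres. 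One small imprecision: your displayed equality for $\supp(g)$ should be an inclusion (cancellations may occur), but your argument only needs that $\bq$ survives, which it does since a single subset $I$ realises the maximum.
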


\begin{proof}
  Consider an arbitrary non-zero vector $\bv$ and let $V = \langle \bv \rangle^{\perp}$.
  By the assumption $c$ has an annihilator $f$ such that $\supp(f) \cap V = \{\vec{0}\}$.
  By Theorem \ref{main result 1} the $\R^d$-polynomial
  $$
  g = \prod_{\bu\in \supp(f) \setminus \{\vec{0}\}} (X^{k_{\bu}\bu} - 1)
  $$
  annihilates $c$ for some integers $k_{\bu}$.
  Since $(X^{k_{\bu}\bu} - 1) = -X^{k_{\bu}\bu}(X^{-k_{\bu}\bu} - 1)$, we may replace any $(X^{k_{\bu}\bu} - 1)$ by $(X^{-k_{\bu}\bu} - 1)$ if necessary and hence we may assume that
$k_{\bu} \bu \in H_{\bv}$ for each $\bu$.
Since the support of $g$ consists of $\vec{0}$ and sums of the vectors $  k_{\bu} \bu$ we have $\supp(g) \setminus \{\vec{0}\} \subseteq H_{\bv}$.
Thus, the annihilator $g$ of $c$ has a vertex in direction $\bv$ and hence the claim follows by Theorem \ref{thm: forced periodicity Delone FLC}.
  %Thus, the annihilator $g$ of $c$ has a vertex in direction $\bv$ and hence the assumptions of Theorem \ref{thm: forced periodicity Delone FLC} are fulfilled and hence we have the claim.
\end{proof}

\noindent
In particular, we have the following known result as a corollary.
The original proof of the result uses Theorem \ref{special annihilator configurations} and the theory of expansive subspaces by Boyle and Lind \cite{boyle-lind}.

\begin{corollary}[\cite{DLT_invited}]
  Let $c$ be an integral configuration.
  Assume that for all $V \in \G_{d-1}$ it has a non-trivial annihilator $f$ such that $\supp(f) \cap V = \{\vec{0}\}$.
  Then $c$ is strongly periodic.
\end{corollary}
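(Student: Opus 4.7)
The plan is to reduce the statement to Corollary \ref{corollary: forced periodicity}. I view the integral $\Z^d$-configuration $c$ as an integral $\R^d$-configuration $\tilde c$ by extending by zero outside $\Z^d$ (enlarging the alphabet by $0$ if needed). Then $\supp(\tilde c) \subseteq \Z^d$ is a subset of a Delone set of finite local complexity. Since the support of any $\Z^d$-polynomial $f$ lies in $\Z^d$, a direct calculation shows that $f \tilde c$ agrees with $fc$ on $\Z^d$ and vanishes off $\Z^d$ (every $\tilde c(\cdot - \bv)$ with $\bv \in \supp(f) \subseteq \Z^d$ is supported in $\Z^d$), so $f$ annihilates $c$ if and only if it annihilates $\tilde c$. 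In particular, the hypothesized family of annihilators of $c$ transfers to $\tilde c$.

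For an arbitrary non-zero direction $\bv$, I put $V = \langle \bv \rangle^\perp$ and let $f$ be the assumed annihilator with $\supp(f) \cap V = \{\vec{0}\}$. Exactly as in the proof of Corollary \ref{corollary: forced periodicity}, I apply Theorem \ref{main result 1} to obtain an annihilator of $\tilde c$ of the form $g = \prod_{\bu \in \supp(f) \setminus \{\vec{0}\}}(X^{k_{\bu} \bu} - 1)$ for non-zero integers $k_{\bu}$. Using the identity $X^{k_{\bu} \bu} - 1 = -X^{k_{\bu} \bu}(X^{-k_{\bu} \bu} - 1)$ to flip signs where needed, I arrange every $k_{\bu} \bu$ to lie in the open half-space $H_{\bv}$; then every non-origin element of $\supp(g)$ is a non-empty sum of the $k_{\bu} \bu$'s and therefore also lies in the convex cone $H_{\bv}$. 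Hence $g$ is an annihilator of $\tilde c$ with a vertex in direction $\bv$.

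Having produced an annihilator of $\tilde c$ with a vertex in every non-zero direction, I invoke Theorem \ref{thm: forced periodicity Delone FLC} — in the natural strengthening to $\R^d$-configurations whose support is merely contained in (rather than equal to) a Delone set of finite local complexity, in the spirit of the remarks following Theorem \ref{main result 2} and Lemma \ref{line polynomial annihilator implies periodicity} — to conclude that $\tilde c$ is strongly periodic. Any period $\bt$ of $\tilde c$ must lie in $\Z^d$, since for $\bs \in \supp(\tilde c) \subseteq \Z^d$ one has $\bs + \bt \in \supp(\tilde c) \subseteq \Z^d$. Thus $c$ inherits $d$ linearly independent integer periods and is itself strongly periodic.

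The main obstacle is verifying the stated strengthening of Theorem \ref{thm: forced periodicity Delone FLC} to supports contained in a Delone FLC set. Its proof uses the finite-local-complexity of $\supp(c)$ only through Lemma \ref{lemma: FLC arbitrarily large regions}, whose conclusion carries over since any $B_T$-patch of $\supp(\tilde c)$ is a sub-patch of a $B_T$-patch of the ambient set $\Z^d$ and there are only finitely many of those up to translation; and through Lemma \ref{lemma:hull}, where the needed conclusion — an infinite collection of translates of $\tilde c$ yielding, via pigeonhole over these finitely many patches, arbitrarily similar but distinct pairs of configurations — follows from the fact that the $\Z^d$-orbit of $c$ is infinite whenever $c$ is not strongly periodic.
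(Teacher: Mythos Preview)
Your approach matches the paper's intent: the paper presents this result simply as a corollary of Corollary \ref{corollary: forced periodicity} without giving a separate argument, noting only that the original proof in \cite{DLT_invited} went via Theorem \ref{special annihilator configurations} and the Boyle--Lind theory of expansive subspaces. You correctly identify the gap in quoting Corollary \ref{corollary: forced periodicity} verbatim---namely that $\supp(\tilde c)$ need not be relatively dense---and you patch it by working with supports contained in $\Z^d$ and re-running the proof of Theorem \ref{thm: forced periodicity Delone FLC}.

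One caution about your phrasing: the ``natural strengthening'' of Theorem \ref{thm: forced periodicity Delone FLC} to arbitrary $\R^d$-configurations whose support is merely contained in a Delone FLC set is false as stated, because Lemma \ref{lemma:hull} can fail in that generality (a configuration supported at a single point is not strongly periodic, yet $|\X_c|=1$). What your last paragraph is really doing is a $\Z^d$-specific argument: when $c$ is a non-strongly-periodic $\Z^d$-configuration, the $\Z^d$-orbit of $\tilde c$ is infinite, every restriction $e|_{B_T}$ ranges over a finite set (since $\supp(e)\cap B_T \subseteq \Z^d \cap B_T$), and the role of $\X_c$ in the proof of Theorem \ref{thm: forced periodicity Delone FLC} can be played by that orbit---the condition $e(\vec 0)\neq 0$ is never actually used in the contradiction step. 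With that reading your argument is complete; just avoid advertising it as a blanket extension of the theorem to all sub-FLC supports.
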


\begin{remark}
The converse direction of Corollary \ref{corollary: forced periodicity} holds for any $c \in \C^{\R^d}$.
In other words, a strongly periodic function $c \in \C^{\R^d}$ has for all $V \in \G_{d-1}$ a periodizer $f$ such that $\supp(f) \cap V = \{\vec{0}\}$.
Indeed, since $c$ has $d$ linearly independent period vectors $\bv_1,\ldots,\bv_d$, it follows that for any $V \in \G_{d-1}$ some $\bv_i$ is not in $V$ and hence $X^{\bv_i}-1$ is an annihilator (and hence a periodizer) of $c$ satisfying $\supp(X^{\bv_i}-1)\cap V = \{\vec{0}\}$.
\end{remark}

\section{Conclusion} \label{sec: conclusion}

We generalized an algebraic approach to $\Z^d$-configurations with complex and integer coefficients for $\R^d$-configurations.
We proved structural results on integral $\R^d$-configurations with non-trivial annihilators.
In particular, we showed that if an $\R^d$-configuration has a non-trivial annihilator, then it has an annihilator which is a product of finitely many difference $\R^d$-polynomials (Theorem \ref{main result 1}).
An improved version of this result was proved for $\R^d$-configurations whose supports are Delone sets of finite local complexity (Theorem \ref{main result 2}).
Also, a periodic decomposition for integral $\R^d$-configurations with non-trivial annihilators was proved (Theorem \ref{thm:generalized decomposition theorem}).
For one-dimensional Delone configurations, we showed that annihilation by a non-trivial $\R^d$-polynomial implies periodicity (Theorem \ref{thm:1d delone with annihilators is periodic}).
Additionally, in Section \ref{sec: meyer sets} we considered Meyer sets with sufficiently slow patch-complexity growth and showed that they have non-trivial annihilators.
Finally, we studied forced periodicity of Delone configurations of finite local complexity.

% For future research, ...

%We generalized an algebraic approach to multidimensional configurations for colored Delone sets..

\subsection*{Acknowledgements}

Both authors were supported by the Academy of Finland grant 354965.
Moreover, the first author was supported by the Emil Aaltonen foundation.

% \newpage

\section*{References}

\bibliographystyle{plain}
\bibliography{biblio}

\begin{thebibliography}{10}

\bibitem{aperiodic1}
Michael Baake and Uwe Grimm.
\newblock {\em Aperiodic Order. {\rm Volume 1: A Mathematical Invitation}}.
\newblock Encyclopedia of Mathematics and its Applications. Cambridge
  University Press, 2013.

\bibitem{aperiodic2}
Michael Baake and Uwe Grimm.
\newblock {\em Aperiodic Order. {\rm Volume 2: Crystallography and Almost
  Periodicity}}.
\newblock Encyclopedia of Mathematics and its Applications. Cambridge
  University Press, 2017.

\bibitem{boyle-lind}
Mike Boyle and Douglas Lind.
\newblock Expansive subdynamics.
\newblock {\em Transactions of the American Mathematical Society},
  349(1):55--102, 1997.

\bibitem{cassaigne-example}
Julien Cassaigne.
\newblock An aperiodic uniformly recurrent multidimensional word with low
  complexity.
\newblock Algebraic and Combinatorial Invariants of Subshifts and Tilings,
  2021.

\bibitem{tullio}
Tullio Ceccherini-Silberstein and Michel Coornaert.
\newblock {\em Cellular Automata and Groups}.
\newblock Springer Monographs in Mathematics. Springer Berlin Heidelberg, 2010.

\bibitem{borisdelone}
Boris Delone.
\newblock Neue darstellung der geometrischen kristallographie.
\newblock {\em Zeit. Kristallographie}, 1932.

\bibitem{abstract-algebra}
David~S. Dummit and Richard~M. Foote.
\newblock {\em Abstract Algebra}.
\newblock John Wiley and Sons, Inc, 2004.

\bibitem{DLT_special_issue}
Pyry Herva and Jarkko Kari.
\newblock On forced periodicity of perfect colorings.
\newblock {\em Theory of Computing Systems}, 67:732--759, 2023.

\bibitem{decompositions}
Pyry Herva and Jarkko Kari.
\newblock On the periodic decompositions of multidimensional configurations.
\newblock In Rastislav Kr{\'a}lovi{\v{c}} and V{\v{e}}ra K{\r{u}}rkov{\'a},
  editors, {\em SOFSEM 2025: Theory and Practice of Computer Science}, pages
  45--57, Cham, 2025. Springer Nature Switzerland.

\bibitem{DLT_invited}
Jarkko Kari.
\newblock Expansivity and periodicity in algebraic subshifts.
\newblock {\em Theory of Computing Systems}, 2023.

\bibitem{kari-szabados-multidimensional_words}
Jarkko Kari and Michal Szabados.
\newblock An algebraic geometric approach to multidimensional words.
\newblock In Andreas Maletti, editor, {\em Algebraic Informatics}, pages
  29--42, Cham, 2015. Springer International Publishing.

\bibitem{icalp}
Jarkko Kari and Michal Szabados.
\newblock An algebraic geometric approach to {N}ivat's conjecture.
\newblock In {\em Proceedings of ICALP 2015, part II}, volume 9135 of {\em
  Lecture Notes in Computer Science}, pages 273--285, 2015.

\bibitem{fullproofs}
Jarkko Kari and Michal Szabados.
\newblock An algebraic geometric approach to {N}ivat's conjecture.
\newblock {\em Information and Computation}, 271, 2020.

\bibitem{kurka}
Petr Kurka.
\newblock {\em Topological and Symbolic Dynamics}.
\newblock Collection SMF. Soci{\'e}t{\'e} math{\'e}matique de France, 2003.

\bibitem{Meyer-Lagarias}
Jeffrey~C. Lagarias.
\newblock Meyer's concept of quasicrystal and quasiregular sets.
\newblock {\em Communications in Mathematical Physics}, 179, 08 1996.

\bibitem{part1}
Jeffrey~C. Lagarias.
\newblock Geometric models for quasicrystals {I}. {D}elone sets of finite type.
\newblock {\em Discrete Comput Geom 21, 161–191}, 1999.

\bibitem{lagarias_pleasants_2}
Jeffrey~C. Lagarias and Peter A.~B. Pleasants.
\newblock Local complexity of {D}elone sets and crystallinity.
\newblock {\em Canad. Math. Bull. Vol 48 (4), 2002 pp. 634-652}, 2001.

\bibitem{lagarias_pleasants_ETDS}
Jeffrey~C. Lagarias and Peter A.~B. Pleasants.
\newblock Repetitive {D}elone sets and quasicrystals.
\newblock {\em Ergodic Theory and Dynamical Systems}, 23(3):831--867, 2003.

\bibitem{lindmarcus}
Douglas Lind and Brian Marcus.
\newblock {\em An Introduction to Symbolic Dynamics and Coding}.
\newblock Cambridge University Press, 1995.

\bibitem{meyer}
Yves Meyer.
\newblock Quasicrystals, {D}iophantine approximation and algebraic numbers.
\newblock In Fran{\c{c}}oise Axel and Denis Gratias, editors, {\em Beyond
  Quasicrystals}, pages 3--16, Berlin, Heidelberg, 1995. Springer Berlin
  Heidelberg.

\bibitem{morse-hedlund}
Marston Morse and Gustav~A. Hedlund.
\newblock Symbolic dynamics.
\newblock {\em American Journal of Mathematics}, 60(4):815--866, 1938.

\bibitem{Nivat}
Maurice Nivat.
\newblock {Invited talk at the 24th International Colloquium on Automata,
  Languages, and Programming ({ICALP} 1997)}, 1997.

\bibitem{shechtman}
D.~Shechtman, I.~Blech, D.~Gratias, and J.~W. Cahn.
\newblock Metallic phase with long-range orientational order and no
  translational symmetry.
\newblock {\em Physical Review Letters}, 53:1951--1953, 1984.

\end{thebibliography}

\end{document}